\def\epsilon{\varepsilon}
\def\NN{\mathbb{N}}
\def\ZZ{\mathbb{Z}}
\def\QQ{\mathbb{Q}}
\def\RR{\mathbb{R}}
\def\CC{\mathbb{C}}
\newcommand{\inttt}[1]{\lvert \kern-0.25ex \lvert \kern-0.25ex \lvert #1 \rvert \kern-0.25ex \rvert \kern-0.25ex \rvert}
\newcommand{\nor}[1]{\lvert \kern-0.25ex \lvert #1 \rvert \kern-0.25ex \rvert}
\newcommand{\matriz}[1]{\begin{pmatrix} #1 \end{pmatrix}}
\def\uu{\bm{u}}
\newenvironment{psmallmatrix}
{\big(\begin{smallmatrix}}
{\end{smallmatrix}\big)}
\def\TT{\operatorname{\mathcal{T}}}
\def\WW{\mathcal{W}}
\def\SS{\mathcal{S}}
\def\SSS{\texorpdfstring{$\mathcal{S}$}{}}
\def\AA{\mathcal{A}}
\def\BB{\mathcal{B}}
\def\LL{\mathcal{L}}
\def\btau{\bm{\tau}}
\def\bmu{\bm{\mu}}
\def\rank{\operatorname{rank}}
\def\Aut{\operatorname{Aut}}
\def\Inf{\operatorname{Inf}}
\def\GG{\mathcal{G}}
\def\HH{\mathcal{H}}
\def\JJ{\mathcal{J}}
\def\diam{\operatorname{diam}}
\def\ima{\operatorname{Im}}
\def\Xeq{X_{\text{eq}}}
\def\Teq{T_{\text{eq}}}
\def\pieq{\pi_{\text{eq}}}
\newtheorem{theorem}{Theorem}[section]
\newtheorem{proposition}[theorem]{Proposition}
\newtheorem{lemma}[theorem]{Lemma}
\newtheorem{corollary}[theorem]{Corollary}
\newtheorem{claim}{Claim}[theorem]
\theoremstyle{definition}
\newtheorem{example}[theorem]{Example}
\theoremstyle{remark}
\newtheorem{remark}[theorem]{Remark}
\title{Dynamical properties of minimal Ferenczi subshifts}
\author{Felipe Arbulú}
\address{
Laboratoire Amiénois de Mathématique Fondamentale et Apliquée,
CNRS-UMR 7352,
Université de Picardie Jules Verne,
33 rue Saint Leu,
80039 Amiens cedex 1,
France.}
\email{felipe.arbulu@u-picardie.fr}
\author{Fabien Durand}
\address{
Laboratoire Amiénois de Mathématique Fondamentale et Apliquée,
CNRS-UMR 7352,
Université de Picardie Jules Verne,
33 rue Saint Leu,
80039 Amiens cedex 1,
France.}
\email{fabien.durand@u-picardie.fr}
\date{\today}
\begin{document}


\keywords{Ferenczi subshifts, rank one subshifts, $\SS$-adic subshifts, minimal Cantor systems}


\begin{abstract}
We provide an explicit $\SS$-adic representation of rank one subshifts with bounded spacers and call the subshifts obtained in this way ``Ferenczi subshifts''. 
We aim to show that this approach is very convenient to study the dynamical behavior of rank one systems. 
For instance, we compute their topological rank, the strong and the weak orbit equivalence class.
We observe that they have an induced systems that is a Toeplitz subshift having discrete spectrum.
We also characterize continuous and non continuous eigenvalues of minimal Ferenczi subshifts.
\end{abstract}

\maketitle

\section{Introduction}

Cutting and stacking transformations have been used extensively for more than $50$ years in ergodic theory to produce a wide variety of dynamical systems which exhibit different behaviors \cites{Cha67, Orn72, Jun76, Kin86, Kin88, Bou93, Ada98, Nad98, Ryz20, CPR22, Cre22}.
These articles mainly concern the spectral properties, the centralizer and the disjointness of these transformations.

To understand how simple are these systems, in \cite{ORW82} is introduced the notion of (measurable) \emph{rank} to formalize some constructions initiated by Chacon in \cite{Cha67}.
Roughly speaking, the measurable rank is the minimal number of ``stacks'' needed in the cutting and stacking process.
They are defined by two sequences, usually called \emph{cutting} and \emph{spacer} parameters.
The systems needing a unique stack are called \emph{rank one systems} and should be thought as the simplest systems with respect to this notion.
It includes periodic systems and rotations on compact groups \cite{Jun76}, but also many other systems that devoted a lot of attention since the late $60$'s, as ``almost all'' interval exchanges \cites{Fer97, Vee84}.
They have been mainly studied from a spectral and probabilistic point of view, and served to create examples and counterexamples in ergodic theory.
For instance, the Chacon transformation \cite{Cha67} is one of the first known examples of a measurable transformation which is weakly-mixing but not mixing.

S. Ferenczi \cite{Fer96, Fer97} proposed a different perspective representing these systems as subshifts, whereas they have a purely measure theoretic and geometric origin.
This combinatorial and topological model, that can be tracked down to \cite{Kal84}, imposed a different framework and led to many different questions.
For instance, these subshifts are known to have zero topological entropy.
Moreover, they have nonsuperlinear symbolic complexity  \cite[Proposition 2]{Fer96}, but they may have peaks with any prescribed sub exponential growth \cite[Proposition 3]{Fer96}.
We refer to \cites{GH14, GH16a, GH16b, AFP17, GZ19, GZ20, GH21} for recent results about the combinatorial and topological models of rank one systems.

For minimal systems defined on Cantor spaces, there exists a different and well-established notion of rank, called the \emph{topological rank} \cites{DM08, Dur10, BDM10, BKMS13, DP22}.
The class of systems of topological rank one coincides with the class of odometers, so we decided to refer to the symbolic construction of minimal rank one systems as \emph{Ferenczi subshifts} to avoid any misleading definition.
Moreover, S. Ferenczi being the one that popularized this class of subshifts \cites{Fer96, Fer97}, we came naturally to coin his name to them.

This article is devoted to the study of minimal Ferenczi subshifts, \emph{i.e.}, those defined by a uniformly bounded sequence of spacers.
We attempt to create a comprehensive classification for minimal Ferenczi subshifts according to some dynamical properties that we find relevant.
More specifically, we want to compute their topological rank and to describe their (strong and weak) orbit equivalence class, to describe their (continuous and measurable) spectrum, to explore its mixing properties and to compute their automorphism group.

We begin by making the crucial observation that a subshift is a minimal Ferenczi subshift if and only if it is an $\SS$-adic subshift generated by a particular directive sequence of finite alphabet rank.
The family of $\SS$-adic subshifts, introduced by S. Ferenczi in \cite{Fer96}, is a rich family that has been intensively studied and proposed a lot of different behaviours \cites{Dur00, BD14, Ler14, BSTY19, DDMP21}.

It is particularly desirable to have primitive, proper and recognizable directive sequences as this allows, without effort, to define a nested sequence of Kakutani--Rokhlin partitions in towers \cite{DL12}.
This is a central tool for the study of the dynamical properties.
For instance, systems admitting such partitions with a uniform bound for the number of towers are of zero topological entropy \cite{Dur10}, have an explicit description of their ergodic invariant probability measures \cite{BKMS13} and there exist necessary and sufficient conditions for a complex number to be a continuous or measurable eigenvalue \cites{BDM10, DFM19}.

The directive sequence of morphisms we obtain for minimal Ferenczi subshifts has some nice properties, however they are not \emph{proper}.
A recent result of B. Espinoza \cite{Esp22} shows that this directive sequence can be chosen to be proper, but his general method deteriorates the nice structure of the morphisms we obtained and considerably increases the size of the alphabets.
Nevertheless, we can perform a standard trick which guarantees properness, keeping a nice structure of the morphisms and the alphabets.

A direct consequence of the nice structure of the morphisms generating a minimal Ferenczi subshift is that we can compute the topological rank in terms of the cutting and spacer parameters, we recover the well-known fact that they are uniquely ergodic and we show that they have a Toeplitz subshift as an induced system.
Moreover, we show that this induced system is mean equicontinuous and thus has discrete spectrum \cites{LTY15, DG16, GJY21}.

We characterize the \emph{exact finite rank} of the directive sequences for minimal Ferenczi subshifts, \emph{i.e.}, when all towers decomposing the system have a measure bounded away from zero at each level \cite{BKMS13}.
This has an incidence in the study of measurable eigenvalues, as we give a general necessary condition for a complex number to be a measurable eigenvalue for $\SS$-adic subshifts.
We believe this result has its own interest for further studies.
It extends to subshifts what it is often called the \emph{Veech criterion} for interval exchange transformations \cite{Vee84}.

In order to understand the (strong and weak) orbit equivalence class of minimal Ferenczi subshifts and their infinitesimals (in the spirit of \cite{GPS95}), we provide a one-to-one correspondence between the orbit equivalence classes and a family of dimension groups, that we call of \emph{Ferenczi type}.

We then turn to the study of eigenvalues of minimal Ferenczi subshifts.
The group of measurable eigenvalues of a given system gives useful information, as it defines the Kronecker factor that comes naturally with the result of Halmos and von Neumann \cite{HN42}, and also allows to study the weakly-mixing property.
In the topological dynamics counterpart, the group of continuous eigenvalues allows us to understand the maximal equicontinuous factor (in the minimal case) and the topological weakly-mixing property.

In general, it is not true that measurable eigenvalues are continuous.
Measurable eigenvalues coincides with continuous ones for the class of primitive substitution systems \cite{Hos86}.
However, there exist linearly recurrent minimal Cantor systems with measurable and noncontinuous eigenvalues \cite{BDM05}.

In this article, we adopt the general framework of \cites{BDM10, DFM19} to study eigenvalues of minimal Ferenczi subshifts.
This allows to give an alternative proof about the description of continuous eigenvalues \cites{GH16a, GZ19} and
to show that all measurable eigenvalues are continuous in the \emph{exact finite rank} case, which extends a result in \cite{GH16a}.
We also provide some realization results in the non exact finite rank case with noncontinuous eigenvalues.

We also explore the mixing properties of minimal Ferenczi subshifts.
With this purpose, inspired by results in \cite{KSS05}, we give a general necessary condition for topological mixing of minimal subshifts defined on a binary alphabet.
This gives an alternative proof to the fact that minimal Ferenczi subshifts are not topologically mixing \cite{GZ19}.

Finally, we show that subshifts in this family have a unique asymptotic class, which by a standard argument implies that the automorphism group is trivial.
This gives an alternative proof of a result in \cite{GH16b}.

We expect that this $\SS$-adic approach is convenient to investigate some other relevant questions in topological and measurable dynamics of subshifts.

\subsection{Organization}

In the next section we give the basic background in topological dynamics and $\SS$-adic subshifts needed in this article.
We characterize minimal Ferenczi subshifts as those $\SS$-adic subshifts generated by particular directive sequences in \Cref{s:ferenczi_subshifts}.
\Cref{s:top_dyn} is devoted to the study of these subshifts from the topological dynamics viewpoint.
We compute the topological rank and the dimension group of minimal Ferenczi subshifts and their strong and weak orbit equivalence classes.
Then, we study the continuous eigenvalues, the maximal equicontinuous factor and the topological mixing of minimal Ferenczi subshifts.
In the last part of the section, we show that minimal Ferenczi subshifts have a unique asymptotic class and a trivial automorphism group.

We study the measurable eigenvalues of minimal Ferenczi subshifts in \Cref{s:measurable}.
We illustrate these results with concrete examples.

In this article, we let $\NN$ and $\ZZ$ denote the set of nonnegative integers and the set of integers numbers, respectively.
For a finite set $\AA$, we also denote by $\RR_+^\AA$ (resp. $\ZZ_+^\AA$) the set of nonnegative vectors (resp. nonnegative integer vectors) indexed by $\AA$.
Similarly, we denote by $\RR_{>0}^\AA$ (resp. $\ZZ_{>0}^\AA$) to the set of positive vectors (resp. positive integer vectors).
For a vector $v$ in $\RR^\AA$ the Euclidean norm of $v$ is denoted by $\nor{v}$ and we write $\inttt{v} = \inf_{w \in \ZZ^\AA} \nor{v - w}$.

\subsection{Acknowledgments}

The authors are grateful to Bastián Espinoza, Alejandro Maass and Samuel Petite for fruitful discussions.
The first author is supported by ANID (ex CONICYT) Doctorado Becas Chile $72210185$ grant.

\section{Preliminaries}

\subsection{Basics in topological dynamics and eigenvalues}

A \emph{topological dynamical system} (or just a system) is a compact metric space $X$ together with a homeomorphism $T : X \to X$.
We use the notation $(X, T)$.
If $X$ is a Cantor space (\emph{i.e.}, $X$ has a countable basis of clopen sets and it has no isolated points) we say it is a \emph{Cantor system}.
The system $(X, T)$ is \emph{minimal} if for every point $x \in X$ the orbit $\{T^n x : n \in \ZZ\}$ is dense in $X$.

Let $(X, T)$ and $(X', T')$ be two topological dynamical systems.
We say that $(X', T')$ is a \emph{topological factor} of $(X, T)$ if there exists a continuous and surjective map $\phi : X \to X'$ such that
\begin{equation}\label{eq:top_factor}
\phi \circ T
= T' \circ \phi.
\end{equation}
In this case, we say that $\phi$ a \emph{factor map}.
If in addition the map $\phi$ in \eqref{eq:top_factor} is a homeomorphism, we say that it is a \emph{topological conjugacy} and that $(X, T)$ and $(X', T')$ are \emph{topologically conjugate}.

Let $(X, T)$ be a minimal Cantor system and $U \subseteq X$ be a nonempty clopen set.
We can define the \emph{return time function} $r_U : X \to \NN$ by
\[
r_U(x)
= \inf \{n > 0 : T^n x \in U\}, \quad
x \in X.
\]
It is easy to see that the map $r_U$ is locally constant, and hence continuous.
The \emph{induced map} $T_U : U \to U$ is defined by
\[
T_U(x) = T^{r_U(x)} x, \quad
x \in U.
\]
We have that $T_U : U \to U$ is a homeomorphism and that $(U, T_U)$ is a minimal Cantor system.
We call it the \emph{induced system} of $(X, T)$ on $U$.

We say that a complex number $\lambda$ is a \emph{continuous eigenvalue} of the system $(X, T)$ if there exists a continuous function $f : X \to \CC$, $f \not= 0$, such that $f \circ T = \lambda f$; $f$ is called a \emph{continuous eigenfunction} associated with $\lambda$.
The system $(X, T)$ is \emph{topologically weakly-mixing} if it has no nonconstant continuous eigenfunctions.

Let $\mu$ be a $T$-invariant probability measure defined on the Borel $\sigma$-algebra of $X$, \emph{i.e.}, $\mu(T^{-1}(A)) = \mu(A)$ for every measurable set $A \subseteq X$.
We say that a complex number $\lambda$ is a \emph{measurable eigenvalue} of the system $(X, T)$ with respect to $\mu$ if there exists $f \in L^2(X, \mu)$, $f \not= 0$, such that $f \circ T = \lambda f$; $f$ is called a  \emph{measurable eigenfunction} associated with $\lambda$.
The system is \emph{weakly-mixing} for $\mu$ if it has no nonconstant measurable eigenfunctions.

If the system $(X, T)$ is minimal (resp. if $\mu$ is ergodic for $(X, T)$), then every continuous eigenvalue (resp. measurable eigenvalue with respect to $\mu$) has modulus $1$ and every continuous eigenfunction (resp. measurable eigenfunction) has a constant modulus on $X$ (resp. a constant modulus $\mu$-almost everywhere on $X$).

Whenever the measure $\mu$ is ergodic for $(X, T)$ or when $(X, T)$ is minimal, we write $\lambda = \exp(2 \pi i \alpha)$ with $\alpha \in [0,1)$ to denote eigenvalues of the system.
If $\lambda = \exp(2 \pi i \alpha)$ is an eigenvalue of the system with $\alpha$ an irrational number (resp. rational number), we say that $\lambda$ is an irrational eigenvalue (resp. rational eigenvalue).

\subsection{Basics in symbolic dynamics}

\subsubsection{Subshifts}

Let $\AA$ be a finite set that we call \emph{alphabet}.
Elements in $\AA$ are called \emph{letters} or \emph{symbols}.
The number of letters of $\AA$ is denoted by $|\AA|$.
The set of finite sequences or \emph{words} of length $\ell \in \NN$ with letters in $\AA$ is denoted by $\AA^\ell$ and the set of two-sided sequences $(x_n)_{n \in \ZZ}$ in $\AA$ is denoted by $\AA^\ZZ$.
A word $w = w_0 w_1 \ldots w_{\ell - 1} \in \AA^\ell$ can be seen as an element of the free monoid $\AA^\ast$ endowed with the operation of concatenation (whose neutral element is $\epsilon$, the empty word).
The integer $\ell$ is the \emph{length} of the word $w$ and is denoted by $|w| = \ell$; the length of the empty word is $0$.
A word $v$ is a \emph{power} of a word $u$ if $v = u^n$ for some $n \in \NN$.

For finite words $p$ and $s$ in $\AA^\ast$, we say that they are respectively a \emph{prefix} and a \emph{suffix} of the word $p s$.
For $x \in \AA^\ZZ$ and integers $N > n$ we define the word $x_{[n,N)} = x_n x_{n + 1} \ldots x_{N - 1}$.
For a nonempty word $w \in \AA^\ast$ and a point $x \in \AA^\ZZ$, we say that $w$ \emph{occurs} in $x$ if there exists $n \in \ZZ$ such that $x_n x_{n+1} \ldots x_{n + |w| - 1} = w$.
In this case, we say that the index $n$ is an \emph{occurrence} of $w$ in $x$.
We use the same notion for finite nonempty words $x$.
We say that a nonempty word $w = w_0 w_1 \ldots w_{\ell - 1} \in \AA^\ast$ \emph{starts} (resp. \emph{ends}) with a nonempty word $u \in \AA^\ast$ if $u = w_0 \ldots w_{i - 1}$ for some $i \le \ell$ (resp. $u = w_j \ldots w_{\ell - 1}$ for some $j \ge 0$).

The \emph{shift map} $S : \AA^\ZZ \to \AA^\ZZ$ is defined by $S ((x_n)_{n \in \ZZ}) = (x_{n+1})_{n \in \ZZ}$.
A \emph{subshift} is a topological dynamical system $(X, S)$ where $X$ is a closed and $S$-invariant subset of $\AA^\ZZ$.
Here, we consider the product topology on $\AA^\ZZ$.
Classically, one identifies $(X, S)$ with $X$, so one says that $X$ itself is a subshift.
When we say that a sequence $x$ in a subshift is \emph{aperiodic}, we implicitly mean that $x$ is aperiodic for the action of the shift.

Let $(X, S)$ be a subshift.
The \emph{language} of $(X, S)$ is the set $\LL(X)$ containing all words $w \in \AA^\ast$ such that $w = x_{[m, m + |w|)}$ for some $x = (x_n)_{n \in \ZZ} \in X$ and $m \in \ZZ$.
In this case, we also say that $w$ is a \emph{factor} (also called \emph{subword}) of $x$.
We denote by $\LL_\ell(X)$ the set of words of length $\ell$ in $\LL(X)$.
Given $x \in X$, the language $\LL(x)$ is the set of all words that occur in $x$.
As before, we define $\LL_\ell (x)$.
For two words $u,v \in \LL(X)$, the \emph{cylinder set} $[u.v]$ is the set $\{x \in X : x_{[-|u|,|v|)} = u v\}$.
When $u$ is the empty word we only write $[v]$, erasing the dot.
We remark that cylinder sets are clopen sets and they form a base for the topology of the subshift.

\subsubsection{Morphisms}\label{ss:morphisms}

Let $\AA$ and $\BB$ be finite alphabets and $\tau : \AA^\ast \to \BB^\ast$ be a morphism.
We say that $\tau$ is \emph{erasing} whenever there exists a letter $a \in \AA$ such that $\tau(a)$ is the empty word.
Otherwise we say it is \emph{nonerasing}.
When the morphism $\tau$ is nonerasing, it extends naturally to a map from $\AA^\ZZ$ to $\BB^\ZZ$ by concatenation (we apply $\tau$ to positive and negative coordinates separately and we concatenate the results at coordinate zero).
We continue to call this map $\tau$.
We observe that any map $\tau : \AA \to \BB^\ast$ can be naturally extended to a morphism (that we also denote by $\tau$) from $\AA^\ast$ to $\BB^\ast$ by concatenation.

The \emph{composition matrix} of a morphism $\tau : \AA^\ast \to \BB^\ast$ is given for each $a \in \AA$ and $b \in \BB$ by $M_\tau(b,a) = |\tau(a)|_b$, where $|\tau(a)|_b$ counts the number of occurrences of the letter $b$ in the word $\tau(a)$.
The morphism $\tau$ is said to be \emph{positive} if $M_\tau$ has positive entries and \emph{proper} if there exist $p, s \in \BB$ such that for all $a \in \AA$ the word $\tau(a)$ starts with $p$ and ends with $s$.

The minimum and maximal lengths of $\tau$ are, respectively, the numbers
\[
\langle \tau \rangle
= \min_{a \in \AA} |\tau(a)| \quad
\text{and} \quad
|\tau|
= \max_{a \in \AA} |\tau(a)|.
\]
We say that a morphism $\tau$ is of \emph{constant length} if $\langle \tau \rangle = |\tau|$.
Observe that if $\tau : \AA^\ast \to \BB^\ast$ and $\tau' : \BB^\ast \to \mathcal{C}^\ast$ are two constant length morphisms, then $\tau' \circ \tau$ is also of constant length and
\begin{equation}\label{eq:constant_length}
|\tau' \circ \tau|
= |\tau'| |\tau|.
\end{equation}

Following \cite{BSTY19}, a morphism $\tau : \AA^\ast \to \BB^\ast$ is \emph{left permutative} (resp. \emph{right permutative}) if the first (resp. last) letters of $\tau(a)$ and $\tau(b)$ are different, for all distinct letters $a,b \in \AA$.
Two morphisms $\tau, \widetilde{\tau} : \AA^\ast \to \BB^\ast$ are said to be \emph{rotationally conjugate} if there is a word $w \in \BB^\ast$ such that $\tau(a) w = w \widetilde{\tau}(a)$ for all $a \in \AA$ or $\widetilde{\tau}(a) w = w \tau(a)$ for all $a \in \AA$.

\subsubsection{$\SS$-adic subshifts}\label{ss:s-adic}

We recall the definition of \emph{$\SS$-adic subshifts} as stated in \cite{BSTY19}.
A \emph{directive sequence} $\btau = (\tau_n : \AA_{n+1}^\ast \to \AA_n^\ast)_{n \ge 0}$ is a sequence of nonerasing morphisms.
A slightly more general definition is given in \cite{DP22} including the case of erasing morphisms.
When all morphisms $\tau_n$ for $n \ge 0$ are proper we say that $\btau$ is \emph{proper}.
For $0 \le n \le N$, we denote by $\tau_{[n,N)}$ the morphism $\tau_n \circ \tau_{n+1} \circ \dots \circ \tau_{N-1}$, where $\tau_{[n,n)} : \AA_n^\ast \to \AA_n^\ast$ is the identity map for each $n \ge 0$.
We say $\btau$ is \emph{primitive} if for any $n \in \NN$ there exists $N > n$ such that $M_{\tau_{[n,N)}} $ has positive entries, \emph{i.e.}, for every $a \in \AA_N$ the word $\tau_{[n,N)}(a)$ contains all letters in $\AA_n$.

For $n \in \NN$, the \emph{language $\LL^{(n)}({\btau})$ of level $n$ associated 
with $\btau$} is defined by 
\[
\LL^{(n)}({\btau})
= \{w \in \AA_n^\ast : \text{$w$ occurs in $\tau_{[n,N)}(a)$
for some $a \in \AA_N$ and $N > n$}\}
\]
and let $X_{\btau}^{(n)}$ be the set of points $x \in \AA_n^\ZZ$ such that $\LL(x) \subseteq \LL^{(n)}({\btau})$.
This set clearly defines a subshift that we call the \emph{subshift generated by $\LL^{(n)}({\btau})$}.
We set $X_{\btau} = X_{\btau}^{(0)}$ and call $(X_{\btau},S)$ or $X_{\btau}$ the \emph{$\SS$-adic subshift} generated by the directive sequence $\btau$.

A \emph{contraction} of $\btau = (\tau_n : \AA_{n+1}^\ast \to \AA_n^\ast)_{n \ge 0}$ is a directive sequence of the form
\[
\widetilde{\btau}
= (\widetilde{\tau}_k = \tau_{[n_k, n_{k+1})} : \AA_{n_{k+1}}^\ast \to \AA_{n_k}^\ast)_{k \ge 0},
\]
where the sequence $(n_k)_{k \ge 0}$ is such that $n_0 = 0$ and $n_k < n_{k+1}$ for all $k \ge 0$.
Observe that any contraction of $\btau$ generates the same $\SS$-adic subshift $X_{\btau}$.

We say that a directive sequence $\btau = (\tau_n : \AA_{n+1}^\ast \to \AA_n^\ast)_{n \ge 0}$ is \emph{invertible} if the linear map $M_{\tau_n} : \RR^{\AA_n} \to \RR^{\AA_{n+1}}$ (acting on row vectors) is invertible for all $n \ge 0$.
Observe that this implies that the sequence $(|\AA_n|)_{n \ge 0}$ is constant.

If $\btau$ is primitive, then the subshift $(X_{\btau}, S)$ is minimal (see for instance \cite[Proposition 6.4.5]{DP22}).
The following proposition generalizes \cite[Lemma 3.3]{BCBD+21}.
The proof is similar and we include it here for the sake of completeness.

\begin{proposition}\label{p:aperiodic}
Let $\btau = (\tau_n : \AA_{n+1}^\ast \to \AA_n^\ast)_{n \ge 0}$ be a primitive and invertible directive sequence.
Then $(X_{\btau}, S)$ is minimal and aperiodic.
\end{proposition}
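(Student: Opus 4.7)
Minimality of $(X_\btau, S)$ is immediate from primitivity by the quoted Proposition 6.4.5 of \cite{DP22}; the real content of the statement is aperiodicity, which I plan to prove by contradiction. Assume some $x \in X_\btau$ satisfies $S^p x = x$ with $p \ge 1$ chosen minimal, and write $u = x_{[0, p)}$. Minimality forces the closed $S$-invariant orbit $\{S^j x : 0 \le j < p\}$ to exhaust $X_\btau$, so $X_\btau$ is a finite set and $\LL(X_\btau)$ consists exactly of the factors of the periodic sequence $u^\infty$.

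I now plan to convert this finiteness into a contradiction with invertibility of the composition matrices $M_{[0, N)} = M_{\tau_0} \cdots M_{\tau_{N-1}}$. Each column $M_{[0, N)}(\cdot, a)$, for $a \in \AA_N$, is the letter-count vector of $\tau_{[0, N)}(a) \in \LL(X_\btau)$. Because it is the count vector of a factor of $u^\infty$, it has the form $k \bm{u} + \bm{\delta}$, where $k \in \NN$, $\bm{u}$ denotes the count vector of $u$, and $\bm{\delta}$ lies in a finite set $D \subset \ZZ^{\AA_0}$ indexed by the start and end positions of the factor modulo $p$. Invertibility forces $|\AA_n| = d$ to be constant, so along a subsequence of $N$ I may assume the correction vector $\bm{\delta}_a^{(N)} = \bm{\delta}_a$ is independent of $N$ for every $a$, while primitivity forces the coefficients $k_a^{(N)}$ to grow to infinity.

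The contradiction will then come from a clash of two constraints. First, all columns of $M_{[0, N)}$ lie on $d$ fixed affine lines parallel to $\bm{u}$, so the $L^1$-normalised column matrix converges to a rank-one matrix in direction $\bm{u}/p$. Second, invertibility of each $M_{\tau_n}$ forces $|\det M_{[0, N)}| = \prod_{n < N} |\det M_{\tau_n}| \ge 1$ for every $N$, so the columns must remain honestly $\RR$-independent. Expanding $\det M_{[0, N)}$ by multilinearity with $M_{[0, N)}(\cdot, a) = k_a^{(N)} \bm{u} + \bm{\delta}_a$, and using that any determinant term involving two or more copies of the column $\bm{u}$ vanishes, produces an expression of degree at most one in each $k_a^{(N)}$, with coefficients built from $\bm{u}$ and $D$. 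Comparing this linear-in-$k$ growth with the multiplicative growth of $\prod_{n < N} \det M_{\tau_n}$ along the subsequence, and examining the transition matrices $M_{[N, N')} = M_{[0, N)}^{-1} M_{[0, N')}$ between two subsequence times $N < N'$, should force some auxiliary determinant to vanish, contradicting invertibility. Turning this asymptotic rank-one alignment into an airtight integer-determinant inequality is the main technical hurdle, and I expect to follow the strategy of \cite[Lemma~3.3]{BCBD+21} essentially verbatim.
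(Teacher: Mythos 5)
Your reduction to aperiodicity and your observation that each column of $M_{\tau_{[0,N)}}$ is the letter-count vector of a factor of $u^\infty$, hence of the form $k_a c + \delta_a$ with $c = (|u|_b)_{b \in \AA_0}$ and $\delta_a$ ranging over a finite set, are both correct; but the determinant argument you build on top of this does not close, and I do not believe it can. Expanding $\det M_{\tau_{[0,N)}}$ by multilinearity only shows that the determinant is affine in the $k_a$'s, hence grows at most linearly, and this is perfectly compatible with $|\det M_{\tau_{[0,N)}}| = \prod_{n<N}|\det M_{\tau_n}| \ge 1$, since every factor in the product may equal $\pm 1$ (invertibility here only means nonzero determinant). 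Worse, matrices whose columns all have the form $k_a c + \delta_a$ with bounded $\delta_a$ can genuinely be invertible: already $\begin{psmallmatrix} k & k' \\ k+1 & k'+1 \end{psmallmatrix}$ has both columns equal to $k(1,1)^{T} + (0,1)^{T}$ up to the value of $k$, yet has determinant $k - k'$. So the ``rank-one-plus-bounded-perturbation'' structure alone carries no contradiction, and the auxiliary-determinant step you defer to is exactly the part that fails.

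The missing idea --- and the crux of the paper's proof --- is to take $p$ minimal among the periods of \emph{all} periodic points of $X_{\btau}$ and to apply Fine--Wilf's theorem to the decomposition of $y = \ldots u u . u u \ldots$ into the blocks $\tau_{[0,n)}(x_m)$, for $n$ large enough that $\langle \tau_{[0,n)} \rangle \ge p$. If some block $\tau_{[0,n)}(x_0)$ were not an exact power of $u$, one obtains a factorization $u = v w = w v$ with $v, w$ nonempty, and Fine--Wilf then yields a periodic point of period strictly smaller than $p$, a contradiction. Hence every $\tau_{[0,n)}(a)$, $a \in \AA_n$, is an \emph{exact} power of $u$ (primitivity guarantees every letter of $\AA_n$ occurs in $x$), so all your correction vectors $\delta_a$ vanish, every column of $M_{\tau_{[0,n)}}$ is an integer multiple of the single vector $(|u|_a)_{a \in \AA_0}$, and the matrix has rank one --- contradicting invertibility immediately, with no asymptotics, no passage to subsequences, and no determinant estimates. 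If you want to keep your framework, this Fine--Wilf upgrade from ``factor of $u^\infty$'' to ``power of $u$'' is the step you must insert before the linear-algebra conclusion.
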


\begin{proof}
It is enough to show that $(X_{\btau}, S)$ is aperiodic.
By contradiction, define $p \in \NN$ to be the smallest possible period among all periodic points in $X_{\btau}$.

Let $y = \ldots u u . u u \ldots$ be a periodic point in $X_{\btau}$, where $|u| = p$.
Since $\btau$ is primitive, there exists $n \in \NN$ such that $\langle \tau_{[0,n)} \rangle \ge p$.
Without loss of generality, there exists $x \in \AA_n^\ZZ$ such that $y = \tau_{[0,n)}(x)$.
Furthermore, since $\btau$ is primitive we can assume that every letter of $\AA_n$ occurs in $x$.

If the word $\tau_{[0,n)}(x_0)$ is not a power of $u$, then there exists a nonempty prefix $v$ (resp. nonempty suffix $w$) of $u$ such that $u = v w$, $\tau_{[0,n)}(x_0)$ ends with $v$ and $\tau_{[0,n)}(x_1)$ starts with $w$.
The word $\tau_{[0,n)}(x_1)$ starts with $u$, so there exists a suffix $v'$ of $u$ such that $u = w v'$.
But since $y = \ldots u u . u u \ldots$, the word $v'$ is also a prefix of $u$ with $|v'| = |v|$, so $v = v'$.
Fine--Wilf's theorem then implies that $v$ and $w$ are powers of a same word, contradicting the definition of $p$.

This shows that $\tau_{[0,n)}(x_0) = u^{p_0}$ for some $p_0 \in \NN$ and, inductively, for each $m \in \ZZ$ there exists $p_m \in \NN$ such that $\tau_{[0,n)}(x_m) = u^{p_m}$.
In particular, for each $a \in \AA_n$ there exists $p_a \in \NN$ such that $\tau_{[0,n)}(a) = u^{p_a}$.
Therefore, the columns of $M_{\tau_{[0,n)}}$ are multiples of the column vector $(|u|_a)_{a \in \AA_0}$. This contradicts the fact that the linear map given by $M_{\tau_{[0,n)}}$ is invertible and finishes the proof.
\end{proof}

\subsubsection{Recognizability}\label{ss:recognizability}

Let $\tau : \AA^\ast \to \BB^\ast$ be a nonerasing morphism and $X \subseteq \AA^\ZZ$ be a subshift.
For $x \in X$ and $k \in \NN$ with $0 \le k < |\tau(x_0)|$, the \emph{cutting points} of the pair $(k, x)$ are defined as follows.
If $\ell \ge 0$, we define the $\ell$th cutting point of $(k, x)$ as 
\[
C_\tau^\ell (k, x)
= |\tau(x_{[0, \ell)})| - k.
\]
Similarly, if $\ell < 0$ the $\ell$th cutting point of $(k, x)$ is $C_\tau^\ell (k, x) = -|\tau(x_{[\ell, 0)})| - k$.
Define $\mathcal{C}_\tau^+(k, x) = \{C_\tau^\ell(k, x) : \ell > 0\}$.

If $y = S^k \tau(x)$ with $x \in X$ and $k \in \NN$, $0 \le k < |\tau(x_0)|$, we say that $(k, x)$ is a \emph{centered $\tau$-representation} of $y$.
The centered $\tau$-representation $(k, x)$ \emph{is in} $X$ if $x$ belongs to $X$.
The morphism $\tau$ is \emph{recognizable in $X$} (resp. \emph{recognizable in $X$ for aperiodic points}) if any point $y \in \BB^\ZZ$ (resp. any aperiodic point $y \in \BB^\ZZ$) has at most one centered $\tau$-representation in $X$.
If $\tau$ is recognizable in $\AA^\ZZ$ (for aperiodic points), we say that $\tau$ is \emph{fully recognizable} (for aperiodic points).

In the sequel, we use the following results \cite[Theorem 3.1, Lemma 3.5]{BSTY19}.

\begin{proposition}\label{p:right_perm}
Let $\tau : \AA^\ast \to \BB^\ast$ be a nonerasing morphism.
Assume that $\tau$ is (rotationally conjugate to) a left or right permutative morphism.
Then $\tau$ is fully recognizable for aperiodic points.
\end{proposition}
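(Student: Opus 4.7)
The plan is to reduce to the case where $\tau$ itself is left permutative, exploit a forward determinism property, and finally invoke the aperiodicity of $y$.

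First, I would reduce to the left permutative case. The right permutative case follows by reversing the morphism and the sequences, since this operation preserves centered representations and aperiodicity. For the rotational conjugacy case, if $\tau(a) w = w \widetilde{\tau}(a)$ for all $a$ with $\widetilde{\tau}$ left or right permutative, then $\tau(v) w = w \widetilde{\tau}(v)$ for every $v \in \AA^\ast$ by induction, and this yields $\widetilde{\tau}(x) = S^{|w|} \tau(x)$ for every $x \in \AA^\ZZ$; centered $\tau$-representations of $y$ correspond bijectively to centered $\widetilde{\tau}$-representations of $S^{|w|} y$, so recognizability transfers.

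Assume $\tau$ is left permutative. Set $\BB_0 = \{\tau(a)_0 : a \in \AA\}$ and let $f : \BB_0 \to \AA$ be the inverse of the bijection $a \mapsto \tau(a)_0$, well-defined by left permutativity. The key \emph{forward determinism} property is that for any cutting point $c$ of a centered $\tau$-representation $(k,x)$ of $y$ indexing the letter $x_\ell$, one has $y_c \in \BB_0$, $x_\ell = f(y_c)$, and the next cutting point is $c + |\tau(f(y_c))|$. It follows that if $(k,x)$ and $(k',x')$ are two centered $\tau$-representations of $y$ sharing a cutting point $c$, their cutting point sets and corresponding label sequences coincide on $[c, +\infty)$.

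Now assume $y$ is aperiodic and $(k,x), (k',x')$ are two distinct centered $\tau$-representations with cutting point sets $\mathcal{C}$ and $\mathcal{C}'$. If $\mathcal{C} \cap \mathcal{C}'$ is unbounded below, forward determinism applied along common cutting points tending to $-\infty$ gives $\mathcal{C} = \mathcal{C}'$ with equal labels; since $-k$ and $-k'$ are each characterized as the largest cutting point of their representation in $(-\infty, 0]$, this forces $k = k'$ and $x = x'$, contradicting distinctness. Otherwise, letting $c^\ast$ denote the infimum of $\mathcal{C} \cap \mathcal{C}'$ (or $+\infty$ if the intersection is empty), the two sets are disjoint on $(-\infty, c^\ast)$. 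The main obstacle is this remaining case: I plan to derive a contradiction via a finite-state forward-deterministic evolution on $(-\infty, c^\ast)$ whose state at position $n$ records, for each side, the position within the current $\tau$-block and the label of that block; since at most one side carries a cutting point at any given position, the new label after crossing is recovered by applying $f$ to the letter of $y$ read from the other side's ongoing block. The finiteness of the state space forces the state sequence, and therefore $y$, to be periodic on $(-\infty, c^\ast)$. The final delicate step is to propagate this left-sided period to a bi-infinite period of $y$ by exploiting the $\tau$-image structure on $[c^\ast, +\infty)$ common to both representations, thereby contradicting the aperiodicity of $y$.
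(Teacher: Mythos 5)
The paper does not actually prove this proposition; it is imported from \cite{BSTY19} (Theorem 3.1 there), so your attempt has to stand on its own. Your reductions are sound (reversal for the right permutative case; the identity $\widetilde{\tau}(x)=S^{|w|}\tau(x)$ and the induced bijection between centered representations for rotational conjugates), the forward-determinism lemma is correct, and the finite-state ``joint phase'' machine is exactly the right tool. The case where $\mathcal{C}\cap\mathcal{C}'$ is unbounded below is closed correctly, and so is the case $\mathcal{C}\cap\mathcal{C}'=\emptyset$: a bi-infinite forward orbit of a map on a finite set lies entirely in a cycle (every state in the sequence has arbitrarily long backward orbits, hence is $F$-periodic), so $y$ would be periodic on all of $\ZZ$, contradicting aperiodicity.

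The gap is in the remaining case, where $\mathcal{C}\cap\mathcal{C}'$ is nonempty but bounded below, so that $c^\ast=\min(\mathcal{C}\cap\mathcal{C}')$ is finite. Your plan there is to upgrade the periodicity of $y$ on $(-\infty,c^\ast)$ to a bi-infinite period by ``exploiting the $\tau$-image structure on $[c^\ast,+\infty)$''. This cannot work: the only information available on $[c^\ast,+\infty)$ is that the two representations agree there, which imposes no periodicity constraint on $y_{[c^\ast,+\infty)}$; and a point that is periodic only on a left ray is still aperiodic, so no contradiction with aperiodicity can be reached along this route. The contradiction must instead be read off from the state sequence itself. Since $c^\ast$ is a cutting point of \emph{both} representations, at position $c^\ast-1$ both sides sit at the \emph{last} index of their current blocks (the block of each side preceding $c^\ast$ ends exactly at $c^\ast$). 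This state belongs to the cycle of $F$ carrying the left-infinite orbit, hence recurs at the positions $c^\ast-1-tp$ for all $t\ge 0$, where $p$ is the cycle length; consequently $c^\ast-tp$ is a common cutting point for every $t\ge 1$, contradicting the minimality of $c^\ast$. With this replacement the proof closes using only machinery you have already built; as written, your final step would fail.
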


\begin{proposition}\label{p:composition}
Let $\sigma : \AA^\ast \to \BB^\ast$ and $\tau : \BB^\ast \to \mathcal{C}^\ast$ be two nonerasing morphisms, $X \subseteq \AA^\ZZ$ be a subshift and $Y = \bigcup_{k \in \ZZ} S^k \sigma(X)$.
If $\sigma$ is recognizable in $X$ for aperiodic points and $\tau$ is recognizable in $Y$ for aperiodic points, then $\tau \circ \sigma$ is recognizable in $X$ for aperiodic points.
\end{proposition}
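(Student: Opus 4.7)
The plan is to prove that if $(k,x)$ and $(k',x')$ are two centered $(\tau \circ \sigma)$-representations in $X$ of an aperiodic point $y \in \mathcal{C}^\ZZ$, then $(k,x) = (k',x')$. The natural idea is to peel off $\tau$ first and $\sigma$ second, using the respective hypotheses at each stage.

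First I would rewrite each representation so that applying $\tau$ separates from applying $\sigma$. Set $z = \sigma(x)$ and observe that $(\tau \circ \sigma)(x)_0\cdots(\tau \circ \sigma)(x_0)_{|\tau\sigma(x_0)|-1} = \tau(z_0)\tau(z_1)\cdots\tau(z_{|\sigma(x_0)|-1})$, so there is a unique decomposition $k = |\tau(z_{[0,j)})| + k_\tau$ with $0 \le j < |\sigma(x_0)|$ and $0 \le k_\tau < |\tau(z_j)|$. Then $y = S^{k_\tau}\tau(S^j z)$, and since $S^j z = S^j \sigma(x) \in Y$ by definition of $Y$, the pair $(k_\tau, S^j z)$ is a centered $\tau$-representation of $y$ in $Y$. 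Do the analogous decomposition for $(k',x')$, producing $(k'_\tau, S^{j'}z')$ with $z' = \sigma(x')$.

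Next I would apply recognizability of $\tau$ in $Y$ for aperiodic points (which uses that $y$ is aperiodic) to deduce $k_\tau = k'_\tau$ and $w := S^j\sigma(x) = S^{j'}\sigma(x')$. It then remains to recover $j = j'$ and $x = x'$ from the two centered $\sigma$-representations $(j,x)$ and $(j',x')$ of $w$ in $X$; the bound $j < |\sigma(x_0)|$ established above (and similarly for $j'$) is exactly what makes these representations \emph{centered}. The one step that requires care is checking that $w$ is itself aperiodic, so that recognizability of $\sigma$ can be invoked: if $S^p w = w$ with $p \ge 1$, then $w$ is a periodic concatenation of $w_{[0,p)}$, hence $\tau(w)$ is a periodic concatenation of $\tau(w_{[0,p)})$, and since $\tau$ is nonerasing $|\tau(w_{[0,p)})| \ge 1$, so $\tau(w)$ (and therefore $y = S^{k_\tau}\tau(w)$) would be periodic, contradicting the hypothesis on $y$.

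Once aperiodicity of $w$ is in hand, recognizability of $\sigma$ in $X$ for aperiodic points yields $j = j'$ and $x = x'$, and combining with $k_\tau = k'_\tau$ reconstructs $k = k'$. The main obstacle, such as it is, lies in bookkeeping the decomposition of $k$ and in the aperiodicity argument for $w$; everything else is formal manipulation of the definitions.
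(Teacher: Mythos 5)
Your proof is correct. The paper does not prove this proposition itself---it is quoted directly from [BSTY19, Lemma 3.5]---so there is no internal proof to compare against; your argument (splitting the centered $(\tau\circ\sigma)$-representation into a centered $\tau$-representation of $y$ over a point of $Y$ and a centered $\sigma$-representation of that point over $X$, checking aperiodicity of the intermediate point, and then applying the two recognizability hypotheses in turn) is exactly the standard one, and the delicate points---the uniqueness of the decomposition $k=|\tau(z_{[0,j)})|+k_\tau$ with $0\le j<|\sigma(x_0)|$, the membership $S^j\sigma(x)\in Y$, and the aperiodicity of the intermediate point $w$ via nonerasingness of $\tau$---are all handled correctly.
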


We will also need the following straightforward lemma \cite[Proposition 1.4.30]{DP22}.

\begin{lemma}\label{l:induced}
Let $\tau : \AA^\ast \to \BB^\ast$ be a nonerasing morphism and $X \subseteq \AA^\ZZ$ be a minimal and aperiodic subshift.
Suppose that $\tau$ is recognizable in $X$ and let $Y = \bigcup_{k \in \ZZ} S^k \tau(X)$.
Then $(X, S)$ is topologically conjugate to the induced system $(\tau(X), S_{\tau(X)})$ of $(Y, S)$ on $\tau(X)$.
\end{lemma}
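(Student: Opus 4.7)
The plan is to show that the morphism $\tau$, viewed as a map from $X$ onto $\tau(X)$, is itself the desired topological conjugacy.

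First I would verify that $\tau : X \to \tau(X)$ is a homeomorphism. Continuity is automatic because $\tau$ is nonerasing and therefore extends to a continuous map $\AA^\ZZ \to \BB^\ZZ$ by concatenation, and surjectivity onto $\tau(X)$ holds by definition. Injectivity is the only nontrivial point: if $\tau(x) = \tau(x')$ for some $x, x' \in X$, then $(0, x)$ and $(0, x')$ are both centered $\tau$-representations of the same point, so recognizability of $\tau$ in $X$ forces $x = x'$. Compactness of $X$ and Hausdorffness of $\BB^\ZZ$ then promote this continuous bijection to a homeomorphism.

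Next I would argue that $\tau(X)$ is a clopen subset of $Y$, which is needed for the induced system $(\tau(X), S_{\tau(X)})$ to be defined. Closedness is immediate from continuity of $\tau$ and compactness of $X$. For openness, I would use the standard consequence of recognizability: there exists $L > 0$ such that whether an index $i \in \ZZ$ is a cutting point of the unique centered representation of $y \in Y$ is determined by the central window $y_{[i - L, i + L]}$. Since the centered $\tau$-representation of any $y_0 \in \tau(X)$ has $0$ as a cutting point, a sufficiently small cylinder neighborhood of $y_0$ in $Y$ lies entirely in $\tau(X)$. Aperiodicity of $X$, transferred to $Y$, is precisely what ensures this bounded-determination version of recognizability.

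It remains to check the conjugacy identity. Fix $x \in X$ and set $y = \tau(x) \in \tau(X)$; I claim $r_{\tau(X)}(y) = |\tau(x_0)|$. The inequality $r_{\tau(X)}(y) \le |\tau(x_0)|$ is immediate from $S^{|\tau(x_0)|} \tau(x) = \tau(Sx) \in \tau(X)$. For the reverse, suppose $0 < k < |\tau(x_0)|$ with $S^k \tau(x) = \tau(x')$ for some $x' \in X$; then $(0, x')$ and $(k, x)$ would be two distinct centered $\tau$-representations of the same point, contradicting recognizability. Consequently $S_{\tau(X)}(\tau(x)) = S^{|\tau(x_0)|}(\tau(x)) = \tau(Sx)$, giving the intertwining relation $S_{\tau(X)} \circ \tau = \tau \circ S$, which finishes the proof.

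The main obstacle is the openness of $\tau(X)$ in $Y$: this is where aperiodicity of $X$ and the full strength of recognizability combine, as one must promote recognizability from a purely set-theoretic uniqueness statement to one providing a uniformly bounded window for locally identifying cutting points. Once that is in place, everything else reduces to direct manipulation of the definitions.
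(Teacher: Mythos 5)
The paper does not actually prove this lemma: it is quoted from \cite[Proposition 1.4.30]{DP22}, so there is no in-paper argument to compare against. Your proof is correct and is the standard one. The three ingredients --- injectivity of $\tau$ on $X$, openness of $\tau(X)$ in $Y$, and the identity $r_{\tau(X)}(\tau(x)) = |\tau(x_0)|$ --- all reduce, as you say, to the uniqueness of centered $\tau$-representations, and the uniform recognizability radius you invoke for openness is exactly the fact the paper itself later uses via \cite[Lemma 3.2]{DDMP21} in the proof of \Cref{l:asymptotic}. Note also that $Y=\bigcup_{0\le k<|\tau|}S^k\tau(X)$ is a finite union of compacts, so $Y$ is indeed a subshift and ``closed in $Y$'' is meaningful.

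One small correction of emphasis: the bounded-window version of recognizability follows from recognizability in $X$ together with compactness of $X$ and finiteness of the alphabets alone (a diagonal argument produces a limit point with two distinct centered representations otherwise); aperiodicity is not what supplies it. Aperiodicity and minimality are instead what make $(Y,S)$ a minimal aperiodic Cantor system, so that the induced system on the clopen set $\tau(X)$, as defined in the preliminaries, is the object being described. This does not affect the validity of your argument.
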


\subsubsection{Recognizability for sequences of morphisms}

Following \cite{BSTY19}, a directive sequence $\btau = (\tau_n : \AA_{n+1}^\ast \to \AA_n^\ast)_{n\ge 0}$ is said to be \emph{recognizable at level $n$} if the morphism $\tau_n$ is recognizable in $X_{\btau}^{(n+1)}$.
We say that the directive sequence $\btau$ is \emph{recognizable} if it is recognizable at level $n$ for each $n \ge 0$.

We have that $\btau$ is recognizable if and only if for all $0 \le n < N$ and any point $y \in X_{\btau}^{(n)}$ there is a unique couple $(k, x)$ with $x \in X_{\btau}^{(N)}$ and $0 \le k < |\tau_{[n,N)}(x_0)|$ such that $y = S^k \tau_{[n,N)}(x)$.
This is the content of \cite[Lemma 3.5, Lemma 4.2]{BSTY19}.
Indeed, $\btau$ is recognizable if and only if for all $n \ge 0$ and any point $y \in X_{\btau}$ there is a unique couple $(k, x)$ with $x \in X_{\btau}^{(n)}$ and $0 \le k < |\tau_{[0,n)}(x_0)|$ such that $y = S^k \tau_{[0,n)}(x)$.

\medskip

\Cref{l:induced} imply the following.

\begin{corollary}\label{c:induced_sadic}
Let $\btau = (\tau_n : \AA_{n+1}^\ast \to \AA_n^\ast)_{n \ge 0}$ be a recognizable directive sequence and let $\btau' = (\tau_{n+1} : \AA_{n+2}^\ast \to \AA_{n+1}^\ast)_{n \ge 0}$ be the shifted directive sequence.
Suppose that the subshift $(X_{\btau}, S)$ is minimal and aperiodic.
Then $(X_{\btau'}, S)$ is topologically conjugate to the induced system $(\tau_0(X_{\btau'}), S_{\tau_0(X_{\btau'})})$ of $(X_{\btau}, S)$ on $\tau_0(X_{\btau'})$.
\end{corollary}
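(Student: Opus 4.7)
The plan is to invoke \Cref{l:induced} with the morphism $\tau_0$ and the subshift $X_{\btau'}$, which by construction satisfies $X_{\btau'} = X_{\btau}^{(1)}$. To apply the lemma I must verify three ingredients: (a) $\tau_0$ is recognizable in $X_{\btau'}$; (b) $X_{\btau'}$ is minimal and aperiodic; (c) the set $Y = \bigcup_{k \in \ZZ} S^k \tau_0(X_{\btau'})$ appearing in the lemma actually coincides with $X_{\btau}$, so that the resulting induced system is exactly the one in the statement.

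Ingredient (a) is immediate: the hypothesis that $\btau$ is recognizable means in particular that $\tau_0$ is recognizable in $X_{\btau}^{(1)} = X_{\btau'}$. For ingredient (c) I would prove both inclusions. Any $x \in X_{\btau'}$ has language contained in $\LL^{(1)}(\btau)$, so $\tau_0(x)$ has language contained in $\LL^{(0)}(\btau) = \LL(X_{\btau})$, giving $\tau_0(X_{\btau'}) \subseteq X_{\btau}$ and hence $Y \subseteq X_{\btau}$. Conversely, recognizability at level $0$ means every $y \in X_{\btau}$ admits a centered $\tau_0$-representation $(k, x)$ with $x \in X_{\btau'}$, so $y = S^k \tau_0(x) \in Y$. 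A brief bookkeeping argument with cutting points shows moreover that $Y = \bigcup_{k=0}^{|\tau_0|-1} S^k \tau_0(X_{\btau'})$, a finite union of compact sets, hence itself a subshift.

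The real work is in ingredient (b). Aperiodicity of $X_{\btau'}$ is clear: a periodic $x \in X_{\btau'}$ would have periodic image $\tau_0(x) \in X_{\btau}$, contradicting the aperiodicity of $X_{\btau}$. For minimality, I would argue by contradiction. Let $X' \subsetneq X_{\btau'}$ be a proper nonempty closed $S$-invariant subset. As in (c), $Y' = \bigcup_{k \in \ZZ} S^k \tau_0(X')$ equals the finite union $\bigcup_{k=0}^{|\tau_0|-1} S^k \tau_0(X')$, so it is a nonempty closed $S$-invariant subset of $X_{\btau}$; minimality of $X_{\btau}$ forces $Y' = X_{\btau}$. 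Now for any $x'' \in X_{\btau'}$, the point $\tau_0(x'') \in X_{\btau} = Y'$ admits a centered $\tau_0$-representation whose underlying sequence lies in $X' \subseteq X_{\btau'}$, but it also admits the obvious centered representation $(0, x'')$ in $X_{\btau'}$. Uniqueness of centered $\tau_0$-representations in $X_{\btau'}$ (recognizability at level $0$) forces $x'' \in X'$, contradicting $X' \subsetneq X_{\btau'}$.

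Once (a)--(c) are in place, \Cref{l:induced} yields directly the desired topological conjugacy between $(X_{\btau'}, S)$ and the induced system $(\tau_0(X_{\btau'}), S_{\tau_0(X_{\btau'})})$ of $(X_{\btau}, S) = (Y, S)$. The principal obstacle is (b), specifically pulling back minimality of $X_{\btau}$ to $X_{\btau'}$ through $\tau_0$; everything else is either definitional, or a routine union-of-compacts manipulation.
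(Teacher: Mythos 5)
Your proposal is correct and follows exactly the route the paper intends: the paper simply states that \Cref{l:induced} implies the corollary, leaving the verification of its hypotheses implicit. You supply precisely those missing verifications --- recognizability of $\tau_0$ in $X_{\btau'}=X_{\btau}^{(1)}$, the identification $Y=X_{\btau}$ via existence and uniqueness of centered $\tau_0$-representations, and the pullback of minimality and aperiodicity to $X_{\btau'}$ --- all of which are sound.
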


\subsection{Kakutani--Rokhlin partitions}\label{s:kakutani_rokhlin}

Let $(X, T)$ be a minimal Cantor system.

\subsubsection{CKR partitions of minimal Cantor systems}

A \emph{clopen Kakutani--Rokhlin partition (CKR partition)} $\TT$ of $(X, T)$ is a partition of $X$ of the form 
\[
\TT
= \{T^k B(a) : a \in \AA(\TT),\ 
0 \le k < h(a)\},
\]
where $\AA(\TT)$ is a nonempty finite alphabet, the value $h(a)$ is a positive integer and $B(a)$ is a clopen set for all $a \in \AA(\TT)$.
Observe that
\[
\bigcup_{a \in \AA(\TT)} T^{h(a)} B(a)
= \bigcup_{a \in \AA(\TT)} B(a).
\]
The \emph{base} of $\TT$ is the set $B(\TT) = \bigcup_{a \in \AA(\TT)} B(a)$.
The set $\TT(a) = \bigcup_{0 \le k < h(a)} T^k B(a)$ is called the \emph{tower} indexed by $a \in \AA(\TT)$ of $\TT$ with \emph{base} $B(a)$ and \emph{height} $h(a)$.

Let
\[
\TT_n
= \{T^k B_n(a) : a \in \AA(\TT_n),\
0 \le k < h_n(a)\}, \quad
n \ge 0
\]
be a sequence of CKR partitions of $(X, T)$.
It is \emph{nested} if for any $n \ge 0$:

\begin{enumerate}
	\item[(KR1)] $B(\TT_{n+1}) \subseteq B(\TT_n)$;
	\item[(KR2)] $\TT_n \preceq \TT_{n+1}$, \emph{i.e.}, for every $A \in \TT_{n+1}$ there exists $B \in \TT_n$ such that $A \subseteq B$;
	\item[(KR3)] $\bigcap_{n \ge 0} B(\TT_n) = \{x\}$ for some point $x\in X$; and
	\item[(KR4)] the atoms of $\bigcup_{n \ge 0} \TT_n$ generate the topology of $X$.
\end{enumerate}
We remark that nested sequences always exist \cite[Theorem 4.2]{HPS92}.

For each $n \ge 0$, the \emph{incidence matrix} $M_n$ between the partitions $\TT_{n+1}$ and $\TT_n$ is given for each $a \in \AA(\TT_n)$ and $b \in \AA(\TT_{n+1})$ by
\begin{equation}
M_n(a,b)
= \# \{0 \le k < h_{n+1}(b) :
T^k B_{n+1}(b) \subseteq B_n(a)\}.
\end{equation}
For $n \ge 0$ let $h_n$ be the row vector called \emph{height vector} and defined by
\[
h_n 
= (h_n(a))_{a \in \AA(\TT_n)}.
\]
We define $P_{m,n} = M_m M_{m+1} \ldots M_{n-1}$ for $0 \le m < n$.
Observe that $P_{n,n+1} = M_n$.
By means of a simple induction argument, we have $h_n = h_m P_{m,n}$ and
\begin{equation}\label{eq:P_mn}
P_{m,n}(a,b)
= \# \{0 \le k < h_n(b) : T^k B_n(b) \subseteq B_m(a)\},
\end{equation}
$a \in \AA(\TT_m)$, $b \in \AA(\TT_n)$, $0 \le m < n$.

The \emph{topological rank} of $(X, T)$ is the value
\begin{equation}\label{eq:rank}
\rank (X, T)
= \inf_{\substack{\text{nested sequence $(\TT_n)_{n \ge 0}$} \\
\text{of CKR partitions of $(X, T)$}}} \liminf_{n \to +\infty} |\AA(\TT_n)|.
\end{equation}
Roughly speaking, the topological rank of $(X, T)$ is the smallest number of CKR towers needed to describe $(X, T)$.
The topological rank is invariant under topological conjugacy.
See \cites{DM08, BDM10} for more details.

\subsubsection{Invariant measures through CKR partitions}\label{ss:invariant_measures}

Let $(\TT_n)_{n \ge 0}$ be a nested sequence of CKR partitions.
Any $T$-invariant probability measure $\mu$ of $(X,T)$ is uniquely determined by the values it assigns to atoms of the partitions, hence to the bases $B_n(a)$, $a \in \AA(\TT_n)$ and $n \ge 0$.

For $n \ge 0$ let $\mu_n$ be the column vector called \emph{measure vector} and defined by
\[
\mu_n 
= (\mu_n(a))_{a \in \AA(\TT_n)}, \quad
\text{where} \quad
\mu_n(a)
= \mu(B_n(a)).
\]
Therefore, the measure $\mu$ is completely determined by the sequence of measure vectors $(\mu_n)_{n \ge 0}$.
Since $\mu$ is a probability measure we have 
\begin{equation}\label{eq:prob_measure}
\mu(\TT_n(a))
= h_n(a) \mu_n(a) \quad
\text{and} \quad
\sum_{a \in \AA(\TT_n)} \mu(\TT_n(a)) = 1.
\end{equation}
Additionally, by \eqref{eq:P_mn} we have 
\begin{equation}\label{eq:inv_measure}
\mu_m = P_{m,n} \mu_n, \quad
0 \le m < n.
\end{equation}

\subsubsection{CKR partitions of \SSS-adic subshifts}\label{ss:rep_sadic}

Let $\btau = (\tau_n : \AA_{n+1}^\ast \to \AA_n^\ast)_{n \ge 0}$ be a primitive, proper and recognizable directive sequence which generates the $\SS$-adic subshift $(X_{\btau}, S)$.
Define the sequence $(\TT_n)_{n \ge 0}$ as follows:
\begin{equation}\label{eq:rep_sadic}
\TT_n
= \{S^k \tau_{[0,n)}([a]) :
a \in \AA_n,\
0 \le k < |\tau_{[0,n)}(a)|\}, \quad
n \ge 0.
\end{equation}

The following result proved in \cite[Proposition 2.2]{DL12} shows that $(\TT_n)_{n \ge 0}$ defines a nested sequence of CKR partitions.
We include a proof for the sake of completeness.

\begin{proposition}\label{p:nested}
The sequence $(\TT_n)_{n \ge 0}$ is a nested sequence of CKR partitions of $(X_{\btau}, S)$.
Moreover, for each $n \ge 0$ the incidence matrix $M_n$ between the partitions $\TT_{n+1}$ and $\TT_n$ coincides with the composition matrix $M_{\tau_n}$ of the morphism $\tau_n$:
\[
M_n
= M_{\tau_n}.
\]
\end{proposition}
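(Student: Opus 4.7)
My plan is to deduce the result from two ingredients: the recognizability of $\btau$ (as reformulated in Section 2.2) and the properness of each $\tau_n$. The key point is that every $y \in X_{\btau}$ admits a unique centered representation $(k, x)$ with $x \in X_{\btau}^{(n)}$ and $0 \le k < |\tau_{[0,n)}(x_0)|$. This gives a continuous bijection from the finite disjoint union $\bigsqcup_{a \in \AA_n} \bigsqcup_{k=0}^{|\tau_{[0,n)}(a)|-1} [a]$ onto $X_{\btau}$; since the source is compact and the target Hausdorff, it is a homeomorphism. Hence each $S^k \tau_{[0,n)}([a])$ is clopen, the atoms are pairwise disjoint, and they cover $X_{\btau}$, so $\TT_n$ is a genuine CKR partition with base $B_n(a) = \tau_{[0,n)}([a])$ and height $h_n(a) = |\tau_{[0,n)}(a)|$.

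Next I verify the nesting axioms. For (KR1), properness of $\tau_n$ provides a common first letter $p \in \AA_n$, so any $y \in \tau_{[0,n+1)}([b])$ can be written $y = \tau_{[0,n)}(\tau_n(x))$ with $(\tau_n(x))_0 = p$, hence $y \in \tau_{[0,n)}([p]) \subseteq B(\TT_n)$. For (KR2), writing $\tau_n(b) = c_0 c_1 \cdots c_{m-1}$, any level $0 \le k < |\tau_{[0,n+1)}(b)|$ decomposes uniquely as $k = |\tau_{[0,n)}(c_0 \cdots c_{i-1})| + j$ with $0 \le j < |\tau_{[0,n)}(c_i)|$, and then $S^k \tau_{[0,n+1)}([b]) \subseteq S^j \tau_{[0,n)}([c_i])$, which is an atom of $\TT_n$. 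For (KR3), properness of every $\tau_m$ forces all sequences in $B(\TT_n)$ to share a common positive prefix and a common negative suffix; primitivity drives $\langle \tau_{[0,n)} \rangle \to \infty$, so the shared window on both sides of position $0$ grows unboundedly, leaving exactly one point in $\bigcap_n B(\TT_n)$. For (KR4), I combine the same refinement with minimality: any cylinder $[w] \subseteq X_\btau$ containing $y$ is captured by the atom of $\TT_n$ containing $y$ once $n$ is taken large enough that both $k_n$ and $h_n(x_0^{(n)}) - k_n$ exceed $|w|$, the key point being that a bounded $k_n$-sequence would place $y$ arbitrarily close to the unique base point of (KR3) and arguing symmetrically for the ``top'' via $S^{-1}$.

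Finally, the incidence matrix identity is a direct count. With $\tau_n(b) = c_0 \cdots c_{m-1}$ we have $\tau_{[0,n+1)}(b) = \tau_{[0,n)}(c_0)\,\tau_{[0,n)}(c_1)\cdots\tau_{[0,n)}(c_{m-1})$. Per the unique decomposition above, a level $k$ of $\TT_{n+1}(b)$ lies in $B_n(a)$ exactly when $k$ is the start of a $\tau_{[0,n)}(c_i)$-block with $c_i = a$, i.e., when $k = |\tau_{[0,n)}(c_0 \cdots c_{i-1})|$ for some $i$ with $c_i = a$. The number of such $i$ is $|\tau_n(b)|_a = M_{\tau_n}(a,b)$, giving $M_n(a,b) = M_{\tau_n}(a,b)$.

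The main obstacle I anticipate is (KR4): ruling out the pathological case in which an atom of $\TT_n$ containing $y$ keeps one of its two endpoints near position $0$ for infinitely many $n$. Handling this cleanly requires either the symmetric ``top-of-tower'' argument via $S^{-1}$ or an appeal to the global homeomorphism provided by recognizability, and it is where the primitivity hypothesis does genuine work beyond providing nonemptiness of cylinders.
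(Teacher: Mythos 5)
Most of your argument runs parallel to the paper's proof and those parts are correct: recognizability makes $\TT_n$ a genuine clopen partition, properness gives (KR1), the block decomposition $k = |\tau_{[0,n)}(c_0\cdots c_{i-1})| + j$ gives (KR2), the common prefix/suffix window gives (KR3), and your count of levels landing in $B_n(a)$ correctly identifies $M_n(a,b)$ with $|\tau_n(b)|_a$.

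The gap is in (KR4), and it is the one you yourself flag. Your plan is to take $n$ so large that both $k_n(y)$ and $h_n - k_n(y)$ exceed $|w|$, and to dispose of the remaining case by saying that a bounded sequence $(k_n(y))_n$ would place $y$ \emph{arbitrarily close} to the base point of (KR3). That case cannot be disposed of in this way: since the bases are nested, a bounded $(k_n(y))_n$ is eventually constant equal to some $k$, and then $y = S^k x_0$ \emph{exactly}, where $x_0$ is the point of (KR3). Such points genuinely exist ($x_0$ itself has $k_n \equiv 0$), they are not close to $x_0$ in any metric sense, and for them the naive window argument only controls the coordinates of points of the atom in $[-k, h_n - k)$, leaving everything to the left of $-k$ uncontrolled; the symmetric problem occurs near the tops of the towers. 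So the "pathological case" is not pathological, and neither the appeal to minimality nor the $S^{-1}$ symmetry closes it. The repair is exactly the ingredient you already used for (KR3): by properness of $\tau_{n-1}$, every word $\tau_{[0,n)}(b)$ begins with $\tau_{[0,n-1)}(p_{n-1})$ and ends with $\tau_{[0,n-1)}(s_{n-1})$, so any two points of the base $\tau_{[0,n)}([a])$ agree not just on $[0, h_n(a))$ but on $[-\ell_n, h_n(a) + \ell_n)$ with $\ell_n = \langle \tau_{[0,n-1)}\rangle \to \infty$. Shifting by any $0 \le k < h_n(a)$ then preserves agreement on $[-\ell_n, \ell_n]$, so $\diam\bigl(S^k\tau_{[0,n)}([a])\bigr) \to 0$ uniformly in $k$ and $a$, with no case distinction on $k_n(y)$; this is the paper's argument, and it yields (KR3) and (KR4) simultaneously.
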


\begin{proof}
Since $\btau$ is recognizable, $\TT_n$ is a CKR partition of $X_{\btau}$ for each $n \ge 0$.
Observe that the tower $\TT_n(a)$ has base $B_n(a) = \tau_{[0,n)}([a])$ for $a \in \AA_n$.
Clearly we have $B(\TT_{n+1}) \subseteq B(\TT_n)$ for $n \ge 0$.

\begin{claim}
$\TT_n \preceq \TT_{n+1}$.
\end{claim}

Indeed, let $S^k \tau_{[0, n+1)}([a])$ be an atom of $\TT_{n+1}$, $a \in \AA_{n+1}$, $0 \le k < |\tau_{[0, n+1)}(a)|$.
Let $\tau_n(a) = b_0 b_1 \ldots b_{i-1}$ with $b_j \in \AA_n$, $0 \le j < i$. Then, there exists $j \in [0, i-1)$ satisfying
\[
|\tau_{[0,n)}(b_0 b_1 \ldots b_j)|
\le k
< |\tau_{[0,n)}(b_0 b_1 \ldots b_{j+1})|.
\]
We deduce that if $k' = |\tau_{[0,n)}(b_0 b_1 \ldots b_j)|$, then $S^k \tau_{[0,n+1)}([a]) \subseteq S^{k - k'} \tau_{[0,n)}([b_{j+1}])$ with $0 \le k - k' < |\tau_{[0,n)}(b_{j+1})|$.
This proves the claim.

\begin{claim}
The atoms of $\bigcup_{n \ge 0} \TT_n$ generate the topology of $X_{\btau}$.
\end{claim}

Indeed, let $n \ge 1$, $a \in \AA_n$, $0 \le k < |\tau_{[0,n)}(a)|$ and $\ell$ be a nonnegative integer.
As $\tau_n$ is proper, there exist two letters $p_n$ and $s_n$ in $\AA_n$ such that $\tau_n(a)$ starts with $p_n$ and ends with $s_n$ for all $a \in \AA_{n+1}$ and $n \ge 0$.
Since $\btau$ is primitive, there exists $N \in \NN$ such that if $n \ge N$ then $\langle \tau_{[0,n-1)} \rangle \ge \ell$.
Let $x', y' \in \tau_{[0,n)}([a])$, $u_n = \tau_{[0,n-1)}(s_{n-1})$ and $v_n = \tau_{[0,n)}(a) \tau_{[0,n-1)}(p_{n-1})$.
We have
\[
x'_{[-|u_n|, |v_n|)}
= y'_{[-|u_n|, |v_n|)}
= u_n v_n,
\]
so that $x'_{[-\ell, \ell + k]} = y'_{[-\ell, \ell + k]}$.
If $x,y$ belong to $S^k \tau_{[0,n)}([a])$, $n \ge N$, then $x_{[-\ell - k, \ell]} = y_{[-\ell - k, \ell]}$, and in particular $x_{[-\ell, \ell]} = y_{[-\ell, \ell]}$.
Therefore $\diam(S^k \tau_{[0,n)}([a])) \to 0$ as $n \to +\infty$.
This proves the claim.
Since the bases $(B(\TT_n))_{n \ge 0}$ are nested, they converge to some point.
This finishes the proof of the first statement.


The second statement follows easily from the recognizability of $\btau$.
\end{proof}

We remark that the height vectors $(h_n)_{n \ge 0}$ of $(\TT_n)_{n \ge 0}$ defined by \eqref{eq:rep_sadic} satisfy
\begin{equation}\label{eq:heights}
h_n(a) = |\tau_{[0, n)}(a)|, \quad
a \in \AA_n, \quad
n \ge 0.
\end{equation}

\subsection{Dimension groups}\label{ss:dimension_groups}

In this section we recall the basic on dimension groups and state the main results that we will use throughout this article.
We refer to \cites{GPS95, DP22} for more complete references.

\subsubsection{Direct limits}

Let $(G_n)_{n \ge 0}$ be a sequence of abelian groups and let $i_{n+1, n} : G_n \to G_{n+1}$ for each $n \ge 0$ be a morphism.
Define the subgroups $\Delta$ and $\Delta^0$ of the direct product $\prod_{n \ge 0} G_n$ by
\[
\Delta = 
\{(g_n)_{n \ge 0} \in \textstyle\prod_{n \ge 0} G_n :
g_{n+1}
= i_{n+1, n}(g_n)\
\text{for every large enough $n$}\}
\]
and
\[\Delta^0 = 
\{(g_n)_{n \ge 0} \in \textstyle\prod_{n \ge 0} G_n :
g_n = 0\
\text{for every large enough $n$}\}.
\]
Let $G = \Delta / \Delta^0$ be the quotient group and $\pi : \Delta \to G$ be the natural projection.
The group $G$ is called the \emph{direct limit} of $(G_n)_{n \ge 0}$ and we write $G = \varinjlim G_n$.
If $g \in G_n$, then all sequences $(g_k)_{k \ge 0}$ such that $g_n = g$ and $g_{k+1} = i_{k+1, k}(g_k)$ for all $k \ge n$ belong to $\Delta$ and have the same projection in $G$, denoted by $i_n(g)$.
This defines a group morphism $i_n : G_n \to G$, which we call the \emph{natural morphism} from $G_n$ to $G$.
For $0 \le m < n$ define
\[
i_{n, m}
= i_{n-1, n} \circ i_{n, n+1} \circ \ldots \circ i_{m+1, m}.
\]
We have $i_m = i_n \circ i_{n, m}$ and $G = \bigcup_{n \ge 0} \ima i_n$.

We can also define direct limits of vector spaces.
Let $\mathbb{K}$ be a field.
For each $n \ge 0$, let $V_n$ be a vector space over $\mathbb{K}$ and $i_{n+1, n} : V_n \to V_{n+1}$ be a linear map.
The \emph{direct limit} $V = \varinjlim V_n$ is the vector space over $\mathbb{K}$, where the group structure on $V$ is the one given by the direct limit of the abelian groups $V_n$ and the scalar multiplication is given by pointwise scalar multiplication on each coordinate.

\subsubsection{Orbit equivalence}

Two minimal Cantor systems $(X, T)$ and $(X', T')$ are \emph{orbit equivalent} if there exists a homeomorphism $\Phi : X \to X'$ which sends orbits onto orbits, \emph{i.e.},
\[
\Phi(\{T^n x : n \in \ZZ\})
= \{(T')^n \circ \Phi (x) : n \in \ZZ\}, \quad
x \in X.
\]
This implies that there exist two maps $\alpha : X \to \ZZ$ and $\beta : X' \to \ZZ$, uniquely defined by aperiodicity, such  that
\[
\Phi \circ T (x)
= (T')^{\alpha(x)} \circ \Phi(x) \quad
\text{and} \quad
\Phi \circ T^{\beta(x)} (x)
= T' \circ \Phi(x), \quad
x \in X.
\]
The minimal Cantor systems $(X, T)$ and $(X', T')$ are \emph{strongly orbit equivalent} if $\alpha$ and $\beta$ both have at most one point of discontinuity.

\subsubsection{Dimension groups of minimal Cantor systems}

Denote by $C(X, \ZZ)$ (resp. $C(X, \NN)$) the group (resp. monoid) of continuous functions from $X$ to $\ZZ$ (resp. $\NN$) with the addition operation.
Consider the map $\partial : C(X, \ZZ) \to C(X, \ZZ)$ defined by $\partial f = f \circ T - f$.

A map $f$ is called a \emph{coboundary} if there exists $g \in C(X, \ZZ)$ such that $f = \partial g$.
Two maps $f, f' \in C(X, \ZZ)$ are said to be \emph{cohomologous} if $f - f'$ is a coboundary.

Define the quotient group $H(X, T) = C(X, \ZZ) / \partial C(X, \ZZ)$.
Let $[f]$ be the class of $f \in C(X, \ZZ)$ in $H(X,T)$ and $\pi : C(X, \ZZ) \to H(X,T)$ be the projection map.
Define $H^+(X, T) = \pi(C(X, \NN))$ and denote by $\bm{1}_X$ the constant one valued function.

Consider the triple
\[
K^0(X, T)
= (H(X, T), H^+(X, T), [\bm{1}_X]).
\]
It is an \emph{ordered group} with \emph{order unit} $[\bm{1}_X]$.
As $(X, T)$ is minimal, it is a \emph{dimension group}.
See \cite{GPS95, DP22} for the definitions and more details.
We call it \emph{the dimension group of $(X, T)$}.

It is classical to observe that if $(X, T)$ is topologically conjugate to $(X', T')$, then the ordered groups with order units $K^0(X, T)$ and $K^0(X', T')$ are \emph{unital order isomorphic}, \emph{i.e.}, there exists a group morphism $\delta : H(X, T) \to H(X', T')$ such that  $\delta (H^+(X, T))= H^+(X', T')$ and $\delta ([\bm{1}_X]) = [\bm{1}_{X'}]$.

Denote by $\mathcal{M}(X, T)$ the set of invariant probability measures of $(X, T)$.
We define the set of \emph{infinitesimals} of $H(X, T)$ as
\[
\Inf H(X, T)
= \Big\{[f] \in H(X, T) : \int f d\mu
= 0\ \text{for all}\ \mu \in \mathcal{M}(X, T)\Big\}.
\]
We have that $H(X, T) / \Inf H(X, T)$ with the induced order is also a dimension group.
We denote it $K^0(X, T) / \Inf K^0(X, T)$.

The dimension groups $K^0(X, T)$ and $K^0(X, T) / \Inf K^0(X, T)$ characterize \emph{strong orbit equivalence} and \emph{orbit equivalence}, respectively \cite{GPS95}.

Another description of the dimension group $K^0(X, T)$ is as follows.
Let $(\TT_n)_{n \ge 0}$ be a nested sequence of CKR partitions of $(X, T)$ as defined in \Cref{s:kakutani_rokhlin}.
Let $(\AA(\TT_n))_{n \ge 0}$, $(h_n)_{n \ge 0}$ and $(M_n)_{n \ge 0}$ be the associated sequences of alphabets, height vectors and incidence matrices, respectively.

For $n \ge 0$ we consider $\ZZ^{\AA(\TT_n)}$ as an ordered group of row vectors with the usual order.
Define the sequence of ordered groups with order units
\[
\GG_n
= (\ZZ^{\AA(\TT_n)}, \ZZ_+^{\AA(\TT_n)}, h_n), \quad
n \ge 0.
\]
Let $\GG = \varinjlim \GG_n$ be the direct limit of the groups $\GG_n$ with respect to the morphisms $M_n : \ZZ^{\AA(\TT_n)} \to \ZZ^{\AA(\TT_{n+1})}$ given by the incidence matrix $M_n$ (acting on row vectors).
Let $\GG^+$ be the projection in $\GG$ of the set of points $(x_n)_{n \ge 0} \in \textstyle\prod_{n \ge 0} \ZZ^{\AA(\TT_n)}$ for which there exists $N \in \NN$ such that $x_N \in \ZZ_+^{\AA(\TT_N)}$ and $x_{k+1} = x_k M_k$, $k \ge N$.
Denote by $u$ the projection in $\GG$ of the sequence $(h_n)_{n \ge 0}$.

The tuple $\mathcal{K} = (\GG, \GG^+, u)$ is a dimension group.
The introduction of this dimension group is motivated by the following proposition \cite[Theorem 5.3.6]{DP22}.

\begin{proposition}\label{p:dimension_group_CKR}
Let $(X, T)$ be a minimal Cantor system and $\mathcal{K}$ be the dimension group associated to a nested sequence of CKR partitions of $(X, T)$.
Then, the dimension group $K^0(X, T)$ is unital order isomorphic to $\mathcal{K}$.
\end{proposition}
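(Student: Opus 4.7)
Plan: I plan to construct an explicit unital order isomorphism $\phi : \mathcal{K} \to K^0(X, T)$ level by level. For each $n$, define $\phi_n : \ZZ^{\AA(\TT_n)} \to H(X, T)$ by sending the canonical basis vector $e_a$ to $[\chi_{B_n(a)}]$ and extending linearly. The crucial observation is that $\chi_{T^{-1}A} - \chi_A = \partial \chi_A$ for every clopen set $A$, so $[\chi_{T^k B_n(a)}] = [\chi_{B_n(a)}]$ in $H(X, T)$ for every integer $k$. Since $\bm{1}_X = \sum_{a, k} \chi_{T^k B_n(a)}$, this immediately gives $\phi_n(h_n) = [\bm{1}_X]$. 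Likewise, decomposing $B_n(a)$ into the disjoint union of the $M_n(a,b)$ sets of the form $T^k B_{n+1}(b)$ it contains yields $[\chi_{B_n(a)}] = \sum_b M_n(a,b) [\chi_{B_{n+1}(b)}]$, which is precisely the compatibility $\phi_{n+1} \circ M_n = \phi_n$. By the universal property of the direct limit, the sequence $(\phi_n)_{n \ge 0}$ then induces a unital group morphism $\phi : \mathcal{K} \to H(X, T)$ with $\phi(u) = [\bm{1}_X]$.

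Surjectivity will follow directly from (KR4): every $f \in C(X,\ZZ)$ is locally constant, hence constant on the atoms of $\TT_n$ for $n$ large enough; writing $f = \sum_{a,k} c_{a,k} \chi_{T^k B_n(a)}$ yields $[f] = \phi_n(v)$ with $v(a) = \sum_k c_{a,k}$.

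The main obstacle is injectivity. Suppose $\phi_n(v) = 0$, so $f_v := \sum_a v(a) \chi_{B_n(a)}$ equals $\partial g$ for some $g \in C(X, \ZZ)$. Since $g$ is locally constant and $B(\TT_N)$ shrinks to a point by (KR3), I can choose $N > n$ large enough that $g$ is constant on the whole base $B(\TT_N)$. For each $b \in \AA(\TT_N)$ and any $y \in B_N(b)$, telescoping gives
\[
\sum_{k=0}^{h_N(b)-1} f_v(T^k y) = g(T^{h_N(b)} y) - g(y).
\]
The right-hand side vanishes, since both $y$ and $T^{h_N(b)} y$ lie in $B(\TT_N)$, where $g$ is constant. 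The left-hand side, in view of \eqref{eq:P_mn}, is exactly $(v P_{n,N})(b)$, using that each atom $T^k B_N(b)$ of $\TT_N$ is contained in a unique base $B_n(a)$ by (KR2). Hence $v P_{n,N} = 0$ in $\ZZ^{\AA(\TT_N)}$, so the image of $v$ in $\mathcal{K}$ vanishes.

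Finally, for the order structure, $\phi(\GG^+) \subseteq H^+(X, T)$ is immediate since each $[\chi_{B_n(a)}]$ belongs to $H^+(X, T)$. Conversely, any $[f] \in H^+(X, T)$ admits a representative $f' \in C(X, \NN)$, and the surjectivity argument applied to $f'$ produces a preimage with nonnegative coordinates, proving $\phi^{-1}(H^+(X, T)) \subseteq \GG^+$. This confirms that $\phi$ is a unital order isomorphism.
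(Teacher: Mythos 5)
Your proof is correct. Note that the paper itself does not prove this proposition: it is quoted from \cite[Theorem 5.3.6]{DP22} and used as a black box, so there is no internal argument to compare against. Your construction is the standard one (essentially the proof found in the cited reference): the map $e_a \mapsto [\chi_{B_n(a)}]$, well-defined on the limit because $[\chi_{T^kA}] = [\chi_A]$ and because each base $B_n(a)$ decomposes into the $M_n(a,b)$ atoms $T^kB_{n+1}(b)$ it contains; surjectivity from (KR4) via local constancy; injectivity by telescoping $\partial g$ along a tower of $\TT_N$ with $N$ chosen so that $g$ is constant on $B(\TT_N)$ (which exists by (KR3) plus compactness), which forces $vP_{n,N}=0$; and the order statement from the explicit nonnegative representatives. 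All steps check out. One small imprecision: you say each atom $T^kB_N(b)$ is ``contained in a unique base $B_n(a)$'' by (KR2) --- it is in fact contained in a unique \emph{atom} $T^jB_n(a)$ of $\TT_n$, which need not be a base; what your computation actually uses is the resulting dichotomy that $T^kB_N(b)$ is either contained in $B_n(a)$ or disjoint from it, which is exactly what (KR2) gives. This is a wording slip, not a gap.
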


\subsubsection{Dimension groups of $\SS$-adic subshifts}\label{ss:dg_sadic}

Let $\btau = (\tau_n : \AA_{n+1}^\ast \to \AA_n^\ast)_{n \ge 0}$ be a primitive, proper and recognizable directive sequence.
Suppose that $(X_{\btau}, S)$ is aperiodic.
Let $(\TT_n)_{n \ge 0}$ be the sequence of CKR partitions given in \eqref{eq:rep_sadic} and $\mathcal{K}$ be the dimension group associated to it.
Recall that, by \Cref{p:nested}, the incidence matrix $M_n$ between the partitions $\TT_{n+1}$ and $\TT_n$ coincide with the composition matrix $M_{\tau_n}$ of the morphism $\tau_n$.

We call $\mathcal{K}$ \emph{the dimension group of $\btau$}.
By \Cref{p:dimension_group_CKR}, the dimension group of $(X_{\btau}, S)$ is unital order isomorphic to $\mathcal{K}$.

In the case where all the linear maps $M_{\tau_n}$, $n \ge 1$ are invertible, it is easy to check from the definition that the dimension group $K^0 (X_{\btau}, S)$ is unital order isomorphic to $(\GG, \GG^+, u)$, where
\begin{align*}
\GG &=
\{x \in \RR^{\AA_1} :
x M_{\tau_1} M_{\tau_2} \ldots M_{\tau_n} \in \ZZ^{\AA_{n+1}}\
\text{for large enough $n$}\};\\
\GG^+ &=
\{x \in \RR^{\AA_1} :
x M_{\tau_1} M_{\tau_2} \ldots M_{\tau_n} \in \ZZ_+^{\AA_{n+1}}\
\text{for large enough $n$}\};
\end{align*}
and $u = (|\tau_0(a)|)_{a \in \AA_1} \in \RR^{\AA_1}$.

\section{\SSS-adic representation of minimal Ferenczi subshifts}\label{s:ferenczi_subshifts}

\subsection{Ferenczi subshifts}\label{ss:ferenczi_subshifts}

Following \cites{Fer96, Fer97}, we consider sequences of nonnegative integers $(q_n)_{n \ge 0}$ and $(a_{n,i} : n \ge 0,\ 0 \le i < q_n)$, which we call \emph{cutting} and \emph{spacers} parameters, respectively.
These parameters define a sequence of \emph{generating words} $\WW = (w_n)_{n \ge 0}$ over the alphabet $\{0, 1\}$ inductively by
\begin{equation}\label{eq:generating_words}
w_0
= 0 \quad
\text{and} \quad
w_{n+1}
= w_n 1^{a_{n,0}} w_n 1^{a_{n,1}} \ldots w_n 1^{a_{n,q_n - 1}} w_n, \quad
n \ge 0.
\end{equation}
Observe that
\begin{equation}\label{eq:lengths}
|w_{n+1}|
= (q_n + 1) |w_n| + \sum_{i=0}^{q_n - 1} a_{n,i}, \quad
n \ge 0.
\end{equation}

The sequence $\WW$ allows to construct the subspace of $\{0, 1\}^\ZZ$ given by
\[
X_\WW
= \{x \in \{0, 1\}^\ZZ :
\text{every factor of $x$ is a factor of $w_n$ for some $n \ge 0$}\}
\]
and a one-sided sequence $x \in \{0,1\}^\NN$ by
\begin{equation}\label{eq:infinite_sequence}
x_{[0, |w_n|)}
= w_n, \quad
n \ge 0.
\end{equation}
We define
\begin{equation}\label{eq:Qmn}
Q_{m,n} = \prod\limits_{j = m}^{n-1} (q_j + 1), \quad
0 \le m < n.
\end{equation}
A \emph{contraction} of $\WW$ is a sequence of generating words of the form $\widetilde{\WW} = (w_{n_k})_{k \ge 0}$, where the sequence $(n_k)_{k \ge 0}$ is such that $n_0 = 0$ and $n_k < n_{k+1}$ for all $k \ge 0$.
Observe that if $\widetilde{\WW}$ is a contraction of $\WW$, then the generating words of $\widetilde{\WW}$ satisfy a relation of type \eqref{eq:generating_words} with new parameters $(\widetilde{q}_k : k \ge 0)$ such that 
\begin{equation}\label{eq:cutting}
\widetilde{q}_k + 1
= Q_{n_k, n_{k+1}}, \quad
k \ge 0.
\end{equation}
Moreover, it is easy to check that $X_{\widetilde{\WW}} = X_\WW$.

The pair $(X_\WW, S)$ is a subshift, which we call the \emph{Ferenczi subshift} associated to $\WW$.
It is minimal if the sequence $(a_{n,i} : n \ge 0,\ 0 \le i < q_n)$ is bounded.
If such a sequence is otherwise unbounded, then the two-sided sequence $1^\infty$ given by $1_n^\infty = 1$ for all $n \in \ZZ$ belongs to $X_\WW$ and $X_\WW$ contains at least two points, in particular the subshift $(X_\WW, S)$ is not minimal.
Moreover, in the minimal case, $X_\WW$ is finite if and only if the sequence $x$ given by \eqref{eq:infinite_sequence} is periodic.
See \cite[Section 2]{GH16a}.

\medskip

In the next section we prove that minimal Ferenczi subshifts are $\SS$-adic subshifts.
This is summarized in \Cref{p:tilde_tau}.

\subsection{Minimal Ferenczi subshifts are \SSS-adic}\label{ss:directive_sequences}

From now on, we will assume that $(X_\WW, S)$ is a minimal and aperiodic Ferenczi subshift.
Let $\{a_1, a_2, \ldots, a_\ell\}$ be the set of values of the sequence $(a_{n,i} : n \ge 0,\ 0 \le i < q_n)$ with $a_1 < a_2 < \ldots < a_\ell$.

We begin by constructing a sequence of alphabets $(\AA_n)_{n \ge 0}$ as follows.
Define $\AA_0 = \{0, 1\}$ and for $n \ge 1$ we set
\[
\AA_n
= \{a : a = a_{N,i}\
\text{for some $N \ge n-1$ and $0 \le i < q_N$}\}.
\]
In particular, we have $\AA_1 = \{a_1, a_2, \ldots, a_\ell\}$ and $\AA_n $ is included in $\AA_m$ if $1 \le m \le n$.
Consequently, there exists $n_0 \in \NN$ such that $\AA_n = \AA_{n_0}$ for all $n \ge n_0$.
We define
\begin{equation}\label{eq:A_WW}
\AA_\WW = \AA_{n_0} \quad
\text{and} \quad
d_\WW = |\AA_\WW|.
\end{equation}
It is easy to see that $\AA_\WW$ is well-defined and that if $\WW'$ is a contraction of $\WW$, then $\AA_\WW = \AA_{\WW'}$.
Moreover, since $(X_\WW, S)$ is aperiodic, we have $d_\WW \ge 2$.
Indeed, suppose that $\AA_\WW = \{a\}$ for some $a$. 
Then, one has that $w_n = w_{n_0} 1^a w_{n_0} 1^a \ldots  w_{n_0} 1^a w_{n_0}$ for $n \ge n_0$, contradicting the aperiodicity.

Define the morphism $\tau_0 : \AA_1^\ast \to \AA_0^\ast$ by $\tau_0(a) = 0 1^a$ for $a \in \AA_1$ and the morphism $\widetilde{\tau}_n : \AA_{n+1}^\ast \to \AA_n^\ast$ by
\begin{equation}\label{eq:tau_tilde}
\widetilde{\tau}_n(a)
= a_{n-1,0} a_{n-1,1} \ldots a_{n-1,q_{n-1}-1} a,
\quad a \in \AA_{n+1},
\quad n \ge 1.
\end{equation}
Each morphism $\widetilde{\tau}_n$ for $n \ge 1$ is well-defined, of constant length and right permutative.
Indeed, the images of letters under $\widetilde{\tau}_n$ differ only at the last letter.

We define the directive sequence $\widetilde{\btau}_\WW = (\widetilde{\tau}_n : \AA_{n+1}^\ast \to \AA_n^\ast)_{n \ge 0}$, where $\widetilde{\tau}_0 = \tau_0$.

\begin{lemma}\label{l:natural_subs}
We have $\widetilde{\tau}_{[0,n+1)}(a) = w_n 1^a$ for all $n \ge 0$ and $a \in \AA_{n+1}$.
\end{lemma}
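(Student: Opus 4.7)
My plan is to prove the identity by induction on $n$, leveraging the recursive definition of the generating words $w_n$ and the recursive action of the morphisms $\widetilde{\tau}_n$.

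For the base case $n=0$, I would directly compute: $\widetilde{\tau}_{[0,1)}(a) = \widetilde{\tau}_0(a) = \tau_0(a) = 0\, 1^a = w_0\, 1^a$ for every $a \in \AA_1$, which matches the claim.

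For the inductive step, assuming the identity holds at level $n$, I would unfold the composition as $\widetilde{\tau}_{[0,n+2)}(a) = \widetilde{\tau}_{[0,n+1)}\bigl(\widetilde{\tau}_{n+1}(a)\bigr)$ for $a \in \AA_{n+2}$. Using the definition \eqref{eq:tau_tilde}, $\widetilde{\tau}_{n+1}(a) = a_{n,0} a_{n,1} \ldots a_{n,q_n-1} a$, so applying $\widetilde{\tau}_{[0,n+1)}$ letterwise and invoking the inductive hypothesis on each letter yields
\[
\widetilde{\tau}_{[0,n+2)}(a) = (w_n 1^{a_{n,0}})(w_n 1^{a_{n,1}}) \cdots (w_n 1^{a_{n,q_n-1}})(w_n 1^a),
\]
which by \eqref{eq:generating_words} equals $w_{n+1} 1^a$.

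The only subtlety — and arguably the sole nonroutine point — is making sure that the inductive hypothesis is applicable to each letter $a_{n,i}$ appearing in $\widetilde{\tau}_{n+1}(a)$, i.e., verifying that $a_{n,i} \in \AA_{n+1}$ so that $\widetilde{\tau}_{[0,n+1)}(a_{n,i})$ is defined and equals $w_n 1^{a_{n,i}}$. This follows directly from the definition of the alphabets: by construction $\AA_{n+1} = \{a_{N,i} : N \ge n,\ 0 \le i < q_N\}$ contains in particular the letters $a_{n,i}$ for $0 \le i < q_n$. Once this inclusion is noted, the induction closes and the proof is complete.
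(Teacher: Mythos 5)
Your proof is correct and follows essentially the same induction as the paper: base case $n=0$ from $w_0=0$ and $\tau_0(a)=01^a$, then unfolding $\widetilde{\tau}_{[0,n+2)}(a)=\widetilde{\tau}_{[0,n+1)}(a_{n,0}\ldots a_{n,q_n-1}a)$ and applying the inductive hypothesis letterwise before invoking \eqref{eq:generating_words}. The extra check that each $a_{n,i}$ lies in $\AA_{n+1}$ is a valid (and correctly justified) point that the paper leaves implicit.
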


\begin{proof}
By induction, the property holds if $n = 0$ since $w_0 = 0$.
Now if the property holds for $n \ge 0$, then for $a \in \AA_{n+2}$ we have
\[
\widetilde{\tau}_{[0,n+2)}(a)
= \widetilde{\tau}_{[0,n+1)}(a_{n,0} a_{n,1}
\ldots a_{n,q_n-1} a)
= w_n 1^{a_{n,0}} w_n 1^{a_{n,1}} \ldots w_n 1^{a_{n,q_n-1}} w_n 1^a,
\]
which is precisely $w_{n+1} 1^a$ by \eqref{eq:generating_words}, proving the property by induction.
\end{proof}

\begin{lemma}\label{l:primitive}
The directive sequence $\widetilde{\btau}_\WW$ is primitive.
Moreover, for all $n \ge 0$ and $a \in \AA_{n+1}$, there exists $N > n$ such that $w_n 1^a$ is a factor of the word $w_N$.
\end{lemma}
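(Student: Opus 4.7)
The plan is to prove primitivity first, by describing explicitly which letters appear in the iterated images $\widetilde{\tau}_{[n, N)}(a)$, and then to derive the ``moreover'' statement directly from the recursive definition of the generating words.

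The key structural observation is that the formula $\widetilde{\tau}_n(a) = a_{n-1, 0} a_{n-1, 1} \ldots a_{n-1, q_{n-1} - 1} a$ for $n \ge 1$ implies that $\widetilde{\tau}_n$ sends every letter to a word containing the fixed collection $\{a_{n-1, 0}, \ldots, a_{n-1, q_{n-1}-1}\}$, independently of the input. Iterating this by a short induction on $N - n$, one checks that for $n \ge 1$ and $a \in \AA_N$, the word $\widetilde{\tau}_{[n, N)}(a)$ contains every letter of $\{a_{m, i} : n-1 \le m \le N-2,\ 0 \le i < q_m\}$ together with $a$ itself. Since $\AA_n = \{a_{M, i} : M \ge n-1,\ 0 \le i < q_M\}$ is finite, one can pick $N$ large enough so that every element of $\AA_n$ is realized as some $a_{m, i}$ with $n-1 \le m \le N-2$; for such $N$, $\widetilde{\tau}_{[n, N)}(a)$ exhausts $\AA_n$, yielding primitivity at every level $n \ge 1$.

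Level $n = 0$ requires a separate argument because $\widetilde{\tau}_0$ has a different shape. Writing $\widetilde{\tau}_{[0, N)}(a) = \widetilde{\tau}_0(\widetilde{\tau}_{[1, N)}(a))$ and using $\widetilde{\tau}_0(b) = 0\,1^b$, one sees that the image always contains $0$, and contains $1$ whenever the inner word features some letter $b \ge 1$. Aperiodicity forbids $\AA_1 = \{0\}$, since otherwise every $w_n$ would equal $0^{|w_n|}$ and $X_\WW$ would reduce to a constant sequence; hence $\AA_1$ contains a positive letter. Combined with primitivity at level $1$, which guarantees that $\widetilde{\tau}_{[1, N)}(a)$ meets that positive letter for $N$ large enough, this gives primitivity at level $0$.

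For the ``moreover'' statement, given $n \ge 0$ and $a \in \AA_{n+1}$, I pick $M \ge n$ and $0 \le i < q_M$ with $a = a_{M, i}$, and set $N = M+1$. The recursive formula \eqref{eq:generating_words} directly exhibits $w_M \, 1^a \, w_M$ as a factor of $w_N$ (the $i$-th ``cell''), and an easy induction on the same formula shows that $w_m$ ends with $w_n$ for every $m \ge n$. Reading the suffix $w_n$ of the first copy of $w_M$ followed by $1^a$ then yields $w_n \, 1^a$ as a factor of $w_N$. The only delicate point is level $n = 0$ of primitivity, where aperiodicity must be invoked to exclude $\AA_1 = \{0\}$; the remaining steps are straightforward inductions on the recursive structure.
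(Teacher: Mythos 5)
Your proof is correct, and it splits naturally into a part that mirrors the paper and a part that does not. For primitivity at levels $n \ge 1$ your argument is essentially the paper's: you track which spacer values $a_{m,i}$ occur in the iterated images $\widetilde{\tau}_{[n,N)}(a)$ and use finiteness of $\AA_n$ to choose $N$; the paper runs the same induction one letter at a time. At level $0$ you factor through level $1$ and invoke aperiodicity to rule out $\AA_1 = \{0\}$, whereas the paper simply cites \Cref{l:natural_subs} (which rests on the same aperiodicity fact, recorded earlier as $d_\WW \ge 2$); both are fine. Where you genuinely diverge is the ``moreover'' statement: the paper deduces it from primitivity, choosing $N$ and $b \in \AA_{N+1}$ with $\widetilde{\tau}_{[n+1,N+1)}(b) = u\,a\,v\,b$ and then applying $\widetilde{\tau}_{[0,n+1)}$ via \Cref{l:natural_subs}, while you read it off directly from the recursion \eqref{eq:generating_words}: writing $a = a_{M,i}$ with $M \ge n$ (possible since $a \in \AA_{n+1}$), the word $w_{M+1}$ contains $w_M 1^a$, and an easy induction shows $w_M$ ends with $w_n$, so $w_n 1^a$ occurs in $w_{M+1}$. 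Your route for this part is more elementary and does not use primitivity at all; the paper's route is shorter once \Cref{l:natural_subs} and primitivity are already in hand, but yours has the advantage of making the ``moreover'' clause logically independent of the first claim. Both arguments are complete.
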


\begin{proof}
For the first assumption let $n \ge 0$.
One has to find $N > n$ such that $M_{\widetilde{\tau}_{[n, N)}}$ has positive entries.
If $n = 0$ this is given by \Cref{l:natural_subs}.
Suppose $n \ge 1$.
If $a$ belongs $\AA_n$ then, by definition, there exists $N \ge n-1$ such that $a_{N,i} = a$ for some $0 \le i < q_N$.
This implies that $a$ has an occurrence in $\widetilde{\tau}_{N+1}(b)$ for all $b \in \AA_{N+2}$, and hence, by \eqref{eq:tau_tilde}, it also has an occurrence in $\widetilde{\tau}_{[n,N+2)}(b)$.
This proves the first claim.

For the second one, let $n \ge 0$ and $a \in \AA_{n+1}$.
Since $\widetilde{\btau}_\WW$ is primitive, there exist $N > n$ and $b \in \AA_{N+1}$ such that $\widetilde{\tau}_{[n+1, N+1)}(b) = u a v b$ for some words $u, v$.
Hence, by \Cref{l:natural_subs} we obtain
\[
w_N 1^b
= \widetilde{\tau}_{[0,N+1)}(b)
= \widetilde{\tau}_{[0,n+1)}(u a v b)
= u' w_n 1^a v' w_n 1^b,
\]
for some words $u', v'$, and thus $w_n 1^a$ is a factor of the word $w_N$.
\end{proof}

Now we prove that the directive sequence $\widetilde{\btau}_\WW$ generates the subshift $X_\WW$.

\begin{proposition}\label{p:tilde_tau}
We have $X_\WW = X_{\widetilde{\btau}_\WW}$.
\end{proposition}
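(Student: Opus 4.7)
The plan is to reduce the equality of subshifts to the equality of their defining languages. Both $X_\WW$ and $X_{\widetilde{\btau}_\WW}$ are given as subsets of $\{0,1\}^\ZZ$ of the form $\{x : \LL(x) \subseteq \LL\}$; for $X_\WW$ the language $\LL$ is $\LL_\WW := \bigcup_{n \ge 0} \{u \in \{0,1\}^\ast : u \text{ is a factor of } w_n\}$, while for $X_{\widetilde{\btau}_\WW}$ it is $\LL^{(0)}(\widetilde{\btau}_\WW)$. So it suffices to establish $\LL_\WW = \LL^{(0)}(\widetilde{\btau}_\WW)$.

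Next I will use \Cref{l:natural_subs} to rewrite $\widetilde{\tau}_{[0,N)}(a) = w_{N-1} 1^a$ for every $N \ge 1$ and $a \in \AA_N$, which recasts $\LL^{(0)}(\widetilde{\btau}_\WW)$ as the set of factors of the words $w_{N-1} 1^a$. For the inclusion $\LL_\WW \subseteq \LL^{(0)}(\widetilde{\btau}_\WW)$, I fix $n \ge 0$ and pick any $a \in \AA_{n+1}$ (this alphabet is nonempty): the word $w_n$ is a prefix of $w_n 1^a = \widetilde{\tau}_{[0,n+1)}(a)$, so every factor of $w_n$ occurs in $\widetilde{\tau}_{[0,n+1)}(a)$ and therefore belongs to $\LL^{(0)}(\widetilde{\btau}_\WW)$.

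For the reverse inclusion $\LL^{(0)}(\widetilde{\btau}_\WW) \subseteq \LL_\WW$, I will invoke the second assertion of \Cref{l:primitive}, which says that each $w_{N-1} 1^a$ is itself a factor of some $w_M$; consequently any factor of $w_{N-1} 1^a$ is a factor of $w_M$, hence lies in $\LL_\WW$. There is no substantial obstacle here: the genuine content has already been packaged into \Cref{l:natural_subs} and \Cref{l:primitive}, and the present step reduces to chasing the definitions of $X_\WW$ and of the $\SS$-adic subshift $X_{\widetilde{\btau}_\WW}$.
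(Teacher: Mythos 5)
Your proposal is correct and follows essentially the same route as the paper: the forward inclusion via \Cref{l:natural_subs} (every factor of $w_n$ is a factor of $w_n1^a = \widetilde{\tau}_{[0,n+1)}(a)$) and the reverse inclusion via the second assertion of \Cref{l:primitive} (each $w_{n}1^a$ is a factor of some $w_N$). Recasting the statement as an equality of languages rather than arguing pointwise on elements of the subshifts is only a cosmetic difference.
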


\begin{proof}
If $x$ belongs to $X_\WW$, then every factor of $x$ is a factor of some generating word $w_n$ for some $n \ge 0$, and hence also a factor of $w_n 1^a = \widetilde{\tau}_{[0,n+1)}(a)$ for some $a \in \AA_{n+1}$ by \Cref{l:natural_subs}.
Thus $x$ belongs to $X_{\widetilde{\btau}_\WW}$ and $X_\WW$ is included in $X_{\widetilde{\btau}_\WW}$.

If now $x$ belongs to $X_{\widetilde{\btau}_\WW}$, then every factor of $x$ is a factor of $\widetilde{\tau}_{[0,n)}(a) = w_{n-1} 1^a$ for some $n \ge 1$ and $a \in \AA_n$, thus also a factor of $w_N$ for some $N \ge n$ by \Cref{l:primitive}.
We conclude that $x$ belongs to $X_\WW$ and $X_{\widetilde{\btau}_\WW}$ is included in $X_\WW$.
\end{proof}

\subsection{Recognizable directive sequences for minimal Ferenczi subshifts}\label{ss:recognizable_sequence}

In this section, by a slight modification of the directive sequence $\widetilde{\btau}_\WW$, we describe a primitive, proper and recognizable directive sequence $\btau_\WW$ generating the minimal Ferenczi subshift $(X_\WW, S)$.
This is summarized in \Cref{t:recognizable_sequence}.

We say that the sequence of generating words $\WW$ is \emph{standard} if the sequence $(q_n)_{n \ge 0}$ given by \eqref{eq:generating_words} satisfies $q_n \ge 2$ for each $n \ge 0$.
Observe that we can assume without loss of generality that each sequence $\WW$ is standard.
Indeed, this follows directly from Equation \eqref{eq:cutting}.
From now on assume that $\WW$ is standard.

In order to apply \Cref{p:nested} and obtain sequences of CKR partitions for the subshift $(X_\WW, S)$, we need each morphism $\widetilde{\tau}_n$ for $n \ge 1$ to be proper, which is not the case.
We define a morphism $\tau_n : \AA_{n+1}^\ast \to \AA_n^\ast$ which is proper and rotationally conjugate (as defined in \Cref{ss:morphisms}) to $\widetilde{\tau}_n$ by
\begin{equation}\label{eq:tau}
\tau_n(a)
= a_{n-1,1} a_{n-1,2} \ldots a_{n-1,q_{n-1}-1} a a_{n-1,0},
\quad a \in \AA_{n+1},
\quad n \ge 1.
\end{equation}
Since $\WW$ is standard, this is a well-defined proper morphism of constant length which is rotationally conjugate to $\widetilde{\tau}_n$:
\begin{equation}\label{eq:rotationally}
a_{n-1,0} \tau_n(a)
= \widetilde{\tau}_n(a) a_{n-1,0}, \quad
a \in \AA_{n+1}, \quad
n \ge 1.
\end{equation}

We define the directive sequence $\btau_\WW = (\tau_n : \AA_{n+1}^\ast \to \AA_n^\ast)_{n \ge 0}$.

\begin{lemma}\label{l:rotations_sigma}
For $a \in \AA_{m+1}$ and $m \ge 1$ we have
\begin{align*}
&a_{0,0} \tau_{[1,2)}(a_{1,0}) \ldots \tau_{[1,m)}(a_{m-1,0}) \tau_{[1,m+1)}(a)\\
=& \widetilde{\tau}_{[1,m+1)}(a) \widetilde{\tau}_{[1,m)}(a_{m-1,0}) \ldots \widetilde{\tau}_{[1,2)}(a_{1,0}) a_{0,0}.
\end{align*}
\end{lemma}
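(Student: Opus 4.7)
The plan is to proceed by induction on $m \ge 1$. For the base case $m = 1$, both products in the middle of the identity are empty, and the claim reduces directly to \eqref{eq:rotationally} at level $n = 1$, namely $a_{0,0}\tau_1(a) = \widetilde{\tau}_1(a)\, a_{0,0}$ for every $a \in \AA_2$.

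Before running the induction I would establish two preliminary facts. First, \eqref{eq:rotationally} extends from letters to arbitrary words: for any $u \in \AA_{n+1}^\ast$ and any $n \ge 1$,
\[
a_{n-1,0}\, \tau_n(u) = \widetilde{\tau}_n(u)\, a_{n-1,0},
\]
by a one-line induction on $|u|$ using that $\tau_n$ and $\widetilde{\tau}_n$ are morphisms. Applying $\tau_{[1, k+1)}$ to both sides of this identity (with $n = k+1$) then yields the conjugation rule
\[
\tau_{[1,k+1)}(a_{k,0})\, \tau_{[1,k+2)}(u) = \tau_{[1,k+1)}(\widetilde{\tau}_{k+1}(u))\, \tau_{[1,k+1)}(a_{k,0}),
\]
valid for every $u \in \AA_{k+2}^\ast$ and every $k \ge 1$. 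This is the workhorse: it transports the rightmost block of the identity leftwards while converting one outer $\tau$-layer into a $\widetilde{\tau}$-layer at each step.

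For the inductive step from $m$ to $m+1$, starting from
\[
L_{m+1}(a) = a_{0,0}\,\tau_{[1,2)}(a_{1,0}) \cdots \tau_{[1,m+1)}(a_{m,0})\,\tau_{[1,m+2)}(a),
\]
I would apply the conjugation rule successively for $k = m, m-1, \ldots, 1$, taking at step $k$ the word $u = \widetilde{\tau}_{[k+2, m+2)}(a) \in \AA_{k+2}^\ast$ (with $u = a$ at the initial step $k = m$). Using the telescoping $\widetilde{\tau}_k \circ \widetilde{\tau}_{[k+1,m+2)} = \widetilde{\tau}_{[k, m+2)}$ after each step, at the end of this cascade the expression becomes
\[
a_{0,0}\, \tau_1(\widetilde{\tau}_{[2,m+2)}(a)) \cdot \tau_{[1,2)}(a_{1,0})\,\tau_{[1,3)}(a_{2,0}) \cdots \tau_{[1,m+1)}(a_{m,0}).
\]
A final application of the word-level form of \eqref{eq:rotationally} at $n = 1$ transforms the leading factor $a_{0,0}\,\tau_1(\widetilde{\tau}_{[2,m+2)}(a))$ into $\widetilde{\tau}_{[1,m+2)}(a)\, a_{0,0}$, and the remaining tail is exactly the left-hand side of the identity at level $m$ evaluated at $a_{m,0}$. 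The inductive hypothesis then replaces it by the corresponding right-hand side, and concatenation delivers the desired right-hand side at level $m+1$.

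The only real obstacle is the index bookkeeping at each cascade step: one must verify carefully that the nested compositions collapse as $\widetilde{\tau}_{[k, m+2)}$ after each application of the conjugation rule. This is precisely why the word-level extension of \eqref{eq:rotationally} is indispensable, since the intermediate arguments $\widetilde{\tau}_{[k+1,m+2)}(a)$ are not letters of the higher-level alphabets and so cannot be handled by the letter-wise form of \eqref{eq:rotationally} alone; once the extension is at hand, every remaining step is formal.
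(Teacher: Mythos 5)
Your proof is correct and follows essentially the same route as the paper's: the same induction on $m$, with your ``conjugation rule'' being exactly the paper's intermediate claim (Equation \eqref{eq:claim}) specialized to $n=1$, applied in the same cascade $k = m, m-1, \ldots, 1$ followed by the induction hypothesis on the remaining tail. The only cosmetic difference is that you obtain the conjugation rule by pushing the word-level form of \eqref{eq:rotationally} through the morphism $\tau_{[1,k+1)}$, whereas the paper verifies the same identity by expanding the images of $\tau_{k+1}$ and $\widetilde{\tau}_{k+1}$ directly and regrouping.
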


\begin{proof}
We begin by proving the following.
\begin{claim}
For $1 \le n \le N$ and $a \in \AA_{N+1}$, we have
\[
\tau_{[n,N)}(a_{N-1,0}) \tau_{[n,N+1)}(a)
= \tau_{[n,N)}(\widetilde{\tau}_N(a)) \tau_{[n,N)}(a_{N-1,0}).
\]
\end{claim}
Indeed, by means of a simple computation
\begin{align*}
&\tau_{[n,N)}(a_{N-1,0}) \tau_{[n,N+1)}(a)\\
=& \tau_{[n,N)}(a_{N-1,0}) \tau_{[n,N)}(\tau_N(a))\\
=& \tau_{[n,N)}(a_{N-1,0}) \tau_{[n,N)}(a_{N-1,1} \ldots
a_{N-1,q_{N-1} - 1} a a_{N-1,0})\\
=& \tau_{[n,N)}(a_{N-1,0} \ldots a_{N-1,q_{N-1} - 1} a)
\tau_{[n,N)}(a_{N-1,0})\\
=& \tau_{[n,N)}(\widetilde{\tau}_N(a))
\tau_{[n,N)}(a_{N-1,0}),
\end{align*}
proving the claim.
This implies that for $n \in [1, N]$ and $w \in \AA_{N+1}^\ast$, then
\begin{equation}\label{eq:claim}
\tau_{[n,N)}(a_{N-1,0}) \tau_{[n,N+1)}(w)
= \tau_{[n,N)}(\widetilde{\tau}_N(w))
\tau_{[n,N)}(a_{N-1,0}).
\end{equation}

By induction, the statement in the lemma is true if $m = 1$ (see \eqref{eq:rotationally}).
Assume that the statement holds for $m \ge 1$.
By the claim, for $a \in \AA_{m+2}$ we obtain
\begin{align*}
&a_{0,0} \tau_{[1,2)}(a_{1,0}) \ldots \tau_{[1,m)}(a_{m-1,0}) \tau_{[1,m+1)}(a_{m,0}) \tau_{[1,m+2)}(a)\\
=& a_{0,0} \tau_{[1,2)}(a_{1,0}) \ldots \tau_{[1,m)}(a_{m-1,0}) \tau_{[1,m+1)}(\widetilde{\tau}_{m+1}(a)) \tau_{[1,m+1)}(a_{m,0}).
\end{align*}
By using \eqref{eq:claim} with $w = \widetilde{\tau}_{[k, m+2)}(a)$ for $k = m+1, m, \ldots, 2$ and the induction hypothesis, the last term is equal to
\begin{align*}
&\widetilde{\tau}_{[1,m+2)}(a) a_{0,0} \tau_{[1,2)}(a_{1,0}) \ldots \tau_{[1,m+1)}(a_{m,0})\\
=& \widetilde{\tau}_{[1,m+2)}(a) \widetilde{\tau}_{[1,m+1)}(a_{m,0}) \ldots \widetilde{\tau}_{[1,2)}(a_{1,0}) a_{0,0},
\end{align*}
finishing the proof by induction.
\end{proof}

We now prove that the sequences $\btau_\WW$ and $\widetilde{\btau}_\WW$ generate the same subshift.

\begin{proposition}\label{p:s_adic_ferenczi}
We have $X_\WW = X_{\btau_\WW}$.
\end{proposition}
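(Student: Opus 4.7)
My plan is to reduce the proposition to the already-proved \Cref{p:tilde_tau} via \Cref{l:rotations_sigma}. Since $X_\WW = X_{\widetilde{\btau}_\WW}$ is minimal, and since the composition matrices of $\tau_n$ and $\widetilde{\tau}_n$ coincide (their images being cyclic rotations of each other), the directive sequence $\btau_\WW$ is primitive; in particular, $X_{\btau_\WW}$ is a non-empty subshift. Hence, by minimality of $X_\WW$, it suffices to show the inclusion $X_{\btau_\WW} \subseteq X_\WW$; equivalently, $\LL^{(0)}(\btau_\WW) \subseteq \LL(X_\WW)$.

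The principal tool is the identity obtained by applying $\tau_0 = \widetilde{\tau}_0$ to both sides of the equation in \Cref{l:rotations_sigma}:
\[
\tau_0(a_{0,0}) \tau_{[0,2)}(a_{1,0}) \cdots \tau_{[0,m)}(a_{m-1,0}) \tau_{[0,m+1)}(a) = \widetilde{\tau}_{[0,m+1)}(a) \widetilde{\tau}_{[0,m)}(a_{m-1,0}) \cdots \widetilde{\tau}_0(a_{0,0}),
\]
valid for $m \ge 1$ and $a \in \AA_{m+1}$. Call this common word $W_m(a)$; by \Cref{l:natural_subs}, the right-hand side equals $(w_m 1^a)(w_{m-1} 1^{a_{m-1,0}}) \cdots (w_0 1^{a_{0,0}})$. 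In particular, $\tau_{[0,m+1)}(a)$ appears as a suffix of $W_m(a)$.

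To conclude, I plan to show that $W_m(a)$ itself is a factor of some $w_{N+1}$. By primitivity of $\widetilde{\btau}_\WW$, one can pick $N \ge m$ with $a = a_{N, j}$ for some $j$, so that $w_N 1^a w_N$ is a factor of $w_{N+1}$. A straightforward induction on the recursive definition \eqref{eq:generating_words} shows that each $w_k$ has $w_{k-1} 1^{a_{k-1, 0}} w_{k-2} 1^{a_{k-2, 0}} \cdots w_0 1^{a_{0, 0}} w_0$ as a prefix; in particular, the second occurrence of $w_N$ in $w_N 1^a w_N$ starts with $w_{m-1} 1^{a_{m-1, 0}} w_{m-2} 1^{a_{m-2, 0}} \cdots w_0 1^{a_{0, 0}}$. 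Moreover, $w_N$ ends with $w_m$, since each $w_k$ ends with $w_{k-1}$. Extracting the factor of $w_{N+1}$ that begins with the terminal $w_m$ of the first $w_N$ and extends through $1^a$ followed by the above prefix of the second $w_N$ yields precisely $W_m(a) = (w_m 1^a)(w_{m-1} 1^{a_{m-1, 0}}) \cdots (w_0 1^{a_{0, 0}})$ as a factor of $w_{N+1}$.

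The main obstacle is carrying out this recursive unpacking of the prefix and suffix structures of $w_N$ while controlling the behaviour of the initial spacer $a$, which in general differs from $a_{m, 0}$; the resolution is to climb to a level $N$ where $a$ does appear as a spacer coefficient, so that the block structure of $w_{N+1}$ accommodates the desired pattern. Once $W_m(a)$ is realized as a factor of $w_{N+1}$, its suffix $\tau_{[0,m+1)}(a)$ lies in $\LL(X_\WW)$. This establishes $\LL^{(0)}(\btau_\WW) \subseteq \LL(X_\WW)$, and the minimality argument from the first paragraph completes the proof.
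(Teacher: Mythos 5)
Your proof is correct, but it takes a genuinely different route from the paper's. The paper proves the two inclusions between $X_{\btau_\WW}$ and $X_{\widetilde{\btau}_\WW}$ directly at the level of languages: it establishes an auxiliary claim that $a_{0,0}\tau_{[1,2)}(a_{1,0})\cdots\tau_{[1,n)}(a_{n-1,0})$ is a suffix of $\tau_{[1,n+1)}(a)$, uses it together with \Cref{l:rotations_sigma} to show $X_{\widetilde{\btau}_\WW}\subseteq X_{\btau_\WW}$ in detail, and then asserts the reverse inclusion follows from ``a similar claim reversing the roles.'' You instead invoke minimality of $X_\WW$ and primitivity of $\btau_\WW$ (which, as in the paper's remark, depends only on the composition matrices and hence is available before the proposition, so there is no circularity) to reduce to the single inclusion $X_{\btau_\WW}\subseteq X_\WW$ --- notably the \emph{opposite} direction from the one the paper details --- and you establish it by realizing the common word $W_m(a)=(w_m1^a)(w_{m-1}1^{a_{m-1,0}})\cdots(w_01^{a_{0,0}})$ of \Cref{l:rotations_sigma} as an explicit factor of $w_{N+1}$, via the prefix structure $w_{k-1}1^{a_{k-1,0}}\cdots w_01^{a_{0,0}}w_0$ and the suffix $w_m$ of $w_N$, climbing to a level $N$ where $a=a_{N,j}$ so that $w_N1^aw_N$ occurs in $w_{N+1}$. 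I checked the combinatorics: the prefix claim holds by induction on \eqref{eq:generating_words} (using $q_n\ge 1$), the choice of $N$ is guaranteed by the definition of $\AA_{m+1}$, and the extracted factor is exactly $W_m(a)$, whose suffix $\tau_{[0,m+1)}(a)$ then lies in $\LL(X_\WW)$. What your approach buys is a complete, self-contained argument with no ``similar claim'' left to the reader, at the cost of invoking minimality and primitivity; the paper's argument is purely language-theoretic and would apply without the minimality hypothesis, but leaves one direction sketched.
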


\begin{proof}
By \Cref{p:tilde_tau}, it is enough to show that $X_{\btau_\WW} = X_{\widetilde{\btau}_\WW}$.
\begin{claim}
The word $a_{0,0} \tau_{[1,2)}(a_{1,0}) \ldots \tau_{[1,n)}(a_{n-1,0})$ is a suffix of $\tau_{[1,n+1)}(a)$ for all $a \in \AA_{n+1}$ and $n \ge 1$.
\end{claim}
Indeed, this is true for $n = 1$.
Assume that the claim holds for $n \ge 1$.
If $a$ belongs to $\AA_{n+2}$, then the word $a_{0,0} \tau_{[1,2)}(a_{1,0}) \ldots \tau_{[1,n)}(a_{n-1,0}) \tau_{[1,n+1)}(a_{n,0})$ is a suffix of the word
\begin{align*}
&\tau_{[1,n+1)}(a_{n,1}) \ldots
\tau_{[1,n+1)}(a_{n,q_n - 1}) \tau_{[1,n+1)}(a) \tau_{[1,n+1)}(a_{n,0})\\
=& \tau_{[1,n+1)}(a_{n,1} \ldots a_{n,q_n - 1} a a_{n,0}) = \tau_{[1,n+2)}(a),
\end{align*}
proving the claim by induction.

Let $x \in X_{\widetilde{\btau}_\WW}$ and $w$ be a factor of $x$.
Then $w$ is a factor of $\tau_0 \circ \widetilde{\tau}_{[1,n+1)}(a)$ for some $n \ge 1$ and $a \in \AA_{n+1}$.
By \Cref{l:rotations_sigma}, we deduce that $w$ is a factor of the word
\[
\tau_0(a_{0,0} \tau_{[1,2)}(a_{1,0}) \ldots \tau_{[1,n)}(a_{n-1,0})) \tau_0(\tau_{[1,n+1)}(a)).
\]
By using the previous claim with $a = a_{n,q_n - 1}$, the word $w$ is a factor of 
\[\tau_0(\tau_{[1,n+1)}(a_{n,q_n - 1})) \tau_0(\tau_{[1,n+1)}(a)),\]
and by \eqref{eq:tau} also a factor of $\tau_0 \circ \tau_{[1,n+2)}(a)$.
Thus $x$ belongs to $X_{\btau_\WW}$ and $X_{\widetilde{\btau}_\WW}$ is included in $X_{\btau_\WW}$.
Proving a similar claim reversing the roles of $X_{\btau_\WW}$ and $X_{\widetilde{\btau}_\WW}$, we obtain that $X_{\btau_\WW}$ is included in $X_{\widetilde{\btau}_\WW}$.
\end{proof}

We observe that the directive sequence $\btau_\WW$ is primitive.
Indeed, this follows directly from \Cref{l:primitive} since $\tau_n$ is rotationally conjugate to $\widetilde{\tau}_n$ for each $n \ge 1$.


\begin{lemma}\label{l:recognizability}
The directive sequences $\btau_\WW$ and $\widetilde{\btau}_\WW$ are recognizable.
\end{lemma}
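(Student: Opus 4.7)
The plan is to verify recognizability level by level for both directive sequences, reducing the case $n \ge 1$ to \Cref{p:right_perm} and handling $n = 0$ by a direct argument based on the special shape of $\tau_0$.

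A preliminary step is to observe that aperiodicity propagates upward. If $x$ were a periodic point of $X_{\btau_\WW}^{(n+1)}$ (respectively $X_{\widetilde{\btau}_\WW}^{(n+1)}$), then applying the nonerasing morphism $\tau_{[0,n+1)}$ (respectively $\widetilde{\tau}_{[0,n+1)}$) would produce a periodic sequence, and a routine check that $\LL(\tau_{[0,n+1)}(x)) \subseteq \LL^{(0)}(\btau_\WW)$ places this sequence in $X_\WW$, contradicting the standing hypothesis that $X_\WW$ is aperiodic. Hence every point of $X_{\btau_\WW}^{(n+1)}$ and of $X_{\widetilde{\btau}_\WW}^{(n+1)}$ is aperiodic, which is precisely what is required to invoke \Cref{p:right_perm}.

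For $n \ge 1$, the morphism $\widetilde{\tau}_n$ is right permutative since distinct letters $a \in \AA_{n+1}$ produce distinct last letters of $\widetilde{\tau}_n(a) = a_{n-1,0} a_{n-1,1} \ldots a_{n-1,q_{n-1}-1} a$, and by \eqref{eq:rotationally} the morphism $\tau_n$ is rotationally conjugate to $\widetilde{\tau}_n$. \Cref{p:right_perm} then gives full recognizability of both $\tau_n$ and $\widetilde{\tau}_n$ for aperiodic points, and combined with the previous step this yields recognizability at level $n$ for both directive sequences.

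The main obstacle is the level $n = 0$, since $\tau_0 = \widetilde{\tau}_0$ is not of constant length and, as soon as $\AA_1$ omits $0$ or contains at least two positive letters, fails to be right permutative (and is never left permutative); so \Cref{p:right_perm} does not apply. For this I would argue directly from the shape $\tau_0(a) = 0 1^a$: the letter $0$ occurs in each image $\tau_0(a)$ exactly once, at the first position. Consequently, for any $x \in \AA_1^\ZZ$ the set of indices $i$ with $(\tau_0(x))_i = 0$ coincides with the set of cutting points $\{|\tau_0(x_{[0,j)})| : j \in \ZZ\}$. Given an aperiodic $y \in X_\WW$, the boundedness of the spacers $(a_{n,i})$, together with the inductive fact that each $w_n$ starts and ends with $0$, forces the runs of $1$'s appearing in any element of $X_\WW$ to be uniformly bounded, so the set $Z_y = \{i \in \ZZ : y_i = 0\}$ is bi-infinite. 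For any centered $\tau_0$-representation $y = S^k \tau_0(x)$ in $X_{\btau_\WW}^{(1)}$ (or $X_{\widetilde{\btau}_\WW}^{(1)}$), the integer $-k$ is then forced to be the largest element of $Z_y \cap (-\infty, 0]$, which uniquely determines $k$, and each coordinate $x_j$ is uniquely recovered as one less than the gap in $Z_y$ between consecutive elements starting from $-k$. This closes the level-zero case and finishes the verification of recognizability for both directive sequences.
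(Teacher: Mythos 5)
Your proof is correct and follows essentially the same route as the paper: the level-$0$ case is handled by the unique decomposition of points of $X_\WW$ into blocks $0 1^a$ (your cutting-point formulation via the set $Z_y$ of positions of $0$'s is just a more explicit version of this), and the levels $n \ge 1$ are handled by right permutativity of $\widetilde{\tau}_n$, rotational conjugacy, and \Cref{p:right_perm}. The only organizational difference is how the ``for aperiodic points'' caveat in \Cref{p:right_perm} is discharged: the paper composes the morphisms and invokes \Cref{p:composition}, so that it only ever needs aperiodicity of points of $X_\WW$ itself, whereas you prove directly that every level-$n$ subshift is aperiodic and conclude level by level. Both are valid; the one small imprecision is that what is actually needed is aperiodicity of the \emph{represented} points $y$, which live in $X_{\btau_\WW}^{(n)}$ rather than $X_{\btau_\WW}^{(n+1)}$, but your pushforward argument applies verbatim at that level as well.
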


\begin{proof}
Let $y \in X_\WW$ be any aperiodic point.
We prove the uniqueness of a couple $(k, x)$ with $x \in \AA_1^\ZZ$, $0 \le k < |\tau_0(x_0)|$ such that $y = S^k \tau_0(x)$.
Indeed, $y$ can be decomposed uniquely into words from the set $\{0 1^a : a \in \AA_1\}$, and so there exists a unique such couple $(k, x)$ (the zero coordinate of $x$ corresponds to the symbol $a \in \AA_1$ such that the word $0 1^a$ cover the coordinate $y_0$).
Hence $\btau_\WW$ and $\widetilde{\btau}_\WW$ are recognizable at level $0$.

For $n \ge 1$ the morphism $\tau_n$ is rotationally conjugate to the right permutative morphism $\widetilde{\tau}_n$ (see \Cref{ss:directive_sequences}).
Hence, the morphisms $\tau_n$ and $\widetilde{\tau}_n$ are fully recognizable for aperiodic points by \Cref{p:right_perm}.
We conclude the proof using \Cref{p:composition}.
\end{proof}

By combining \Cref{p:s_adic_ferenczi}, \Cref{l:recognizability} and the discussion above, we deduce the following.

\begin{theorem}\label{t:recognizable_sequence}
A subshift $(X, S)$ is a minimal Ferenczi subshift if and only if it is an $\SS$-adic subshift generated by a directive sequence $\btau_\WW$ as in \eqref{eq:tau} where the sequence $(a_{n,i} : n \ge 0, \ 0 \le i < q_n)$ is bounded.
\end{theorem}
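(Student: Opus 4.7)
The plan is to observe that this theorem is essentially a consolidation of what was already proved in \Cref{ss:directive_sequences} and \Cref{ss:recognizable_sequence}, so both directions reduce to citing earlier results together with the discussion immediately following the definition of $X_\WW$.

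For the forward direction, I would start from a minimal Ferenczi subshift $(X, S)$. By definition (see \Cref{ss:ferenczi_subshifts}), there exist cutting parameters $(q_n)_{n\ge 0}$ and spacer parameters $(a_{n,i} : n\ge 0, \ 0\le i<q_n)$ producing the generating words $\WW = (w_n)_{n\ge 0}$ via \eqref{eq:generating_words}, with $X = X_\WW$. As noted after the definition of $X_\WW$, minimality forces $(a_{n,i})$ to be bounded, because otherwise the constant sequence $1^\infty$ lies in $X_\WW$ and produces more than one orbit. Up to replacing $\WW$ by a contraction (which leaves $X_\WW$ unchanged and allows us to take $q_n \ge 2$), I may assume $\WW$ is standard, so that the morphisms in \eqref{eq:tau} are well-defined. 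Then \Cref{p:s_adic_ferenczi} gives $X = X_\WW = X_{\btau_\WW}$ with $\btau_\WW = (\tau_n)_{n\ge 0}$ as in \eqref{eq:tau}, as required.

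For the backward direction, suppose $(X, S)$ is the $\SS$-adic subshift generated by some directive sequence $\btau_\WW$ of the form \eqref{eq:tau}, built from data $(q_n)_{n\ge 0}$ and a bounded spacer sequence $(a_{n,i})$. Form the associated generating words $\WW$ by \eqref{eq:generating_words}. Applying \Cref{p:s_adic_ferenczi} in the opposite direction yields $X = X_{\btau_\WW} = X_\WW$, so $(X, S)$ is the Ferenczi subshift associated to $\WW$. Since the spacer sequence is bounded, the discussion in \Cref{ss:ferenczi_subshifts} again ensures that $(X_\WW, S)$ is minimal.

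There is no substantive obstacle: all the hard work (the $\SS$-adic identifications, the primitivity, the properness, and the recognizability of $\btau_\WW$) has already been carried out in \Cref{p:tilde_tau}, \Cref{l:primitive}, \Cref{p:s_adic_ferenczi}, and \Cref{l:recognizability}. The only technical care needed is the reduction to standard $\WW$ in the forward direction, which is justified by \eqref{eq:cutting} and is harmless for the statement of the theorem.
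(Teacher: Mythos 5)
Your proposal is correct and takes essentially the same route as the paper, which itself derives the theorem by combining \Cref{p:s_adic_ferenczi} with the reduction to standard $\WW$ via contraction (Equation \eqref{eq:cutting}) and the observation in \Cref{ss:ferenczi_subshifts} that minimality of $X_\WW$ is equivalent to boundedness of the spacer sequence. Both directions are handled exactly as the paper intends, so there is nothing to add.
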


\subsection{Some useful computations for Ferenczi subshifts}
In this section we show some useful relations between the parameters defining a minimal Ferenczi subshift $(X_\WW, S)$ defined by a sequence of generating words $\WW$ given by \eqref{eq:generating_words}.

Define 
\begin{equation}\label{eq:f_n}
f_n(a)
= \# \{0 \le i < q_{n-1} : a_{n-1,i} = a\}, \quad
a \in \AA_n, \quad
n \ge 1
\end{equation}
and let $f_n$ be the column vector $f_n = (f_n(a))_{a \in \AA_n}$.
For a vector $f$ in $\RR^\AA$ we denote $|f| = \sum_{a \in \AA} f(a)$.
Observe that for $n \ge 1$,
\begin{align}\label{eq:fn}
|f_n|
& = q_{n-1} \\
\label{eq:sum_fn}
\sum_{b \in \AA_n} f_n(b) \cdot b
& = \sum_{i = 0}^{q_{n-1}-1} a_{n-1, i}.
\end{align}

We now compute the height vectors associated with $\btau_\WW$ and give some estimates.
We recall Equation \eqref{eq:heights}:
\[
h_n(a)
= |\tau_{[0,n)}(a)|, \quad
a \in \AA_n, \quad
n \ge 0.
\]

\begin{lemma}\label{l:computation_heights}
Let $\WW = (w_n)_{n \ge 0}$ be a sequence of generating words and $\btau_\WW$ be the associated directive sequence given by \eqref{eq:tau}.
Then, the height vectors $(h_n)_{n \ge 0}$ associated with $\btau_\WW$ satisfy
\begin{equation}\label{eq:height_vectors}
h_n(a) = a + |w_{n-1}|, \quad
a \in \AA_n, \quad
n \ge 1.
\end{equation}
In particular, there exists $K \ge 1$ such that
\begin{equation}\label{eq:proportional_heights}
K^{-1} h_n(b)
\le h_n(a)
\le K h_n(b), \quad
a, b \in \AA_n, \quad
n \ge 0.
\end{equation}
Moreover, there exists a constant $L \ge 1$ such that
\begin{equation}\label{eq:heights_Q}
L^{-1} Q_{0,n-1} \le
h_n (a) \le
L Q_{0,n-1}, \quad
a \in \AA_n, \quad
n \ge 1
\end{equation}
\end{lemma}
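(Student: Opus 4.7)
The plan is in three parts, one for each assertion.

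For the formula \eqref{eq:height_vectors}, I will exploit that $\tau_n$ and $\widetilde\tau_n$ are rotationally conjugate: by \eqref{eq:tau_tilde} and \eqref{eq:tau}, the images $\tau_n(a)$ and $\widetilde\tau_n(a)$ are cyclic rearrangements of the same word and therefore share the same multiset of letters. Consequently $M_{\tau_n}=M_{\widetilde\tau_n}$, so $M_{\tau_{[0,n)}}=M_{\widetilde\tau_{[0,n)}}$ and in particular $|\tau_{[0,n)}(a)|=|\widetilde\tau_{[0,n)}(a)|$. Applying \Cref{l:natural_subs} at level $n-1$ then yields $h_n(a)=|w_{n-1}1^a|=|w_{n-1}|+a$.

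For \eqref{eq:proportional_heights}, boundedness of $(a_{n,i})$ forces every alphabet $\AA_n$ with $n\ge 1$ to lie inside some fixed interval $[0,M]$. The formula from the previous step then places both $h_n(a)$ and $h_n(b)$ in $[|w_{n-1}|,|w_{n-1}|+M]$, whose endpoints differ by at most a factor $M+1$ since $|w_{n-1}|\ge 1$. The case $n=0$ is immediate because $h_0\equiv 1$ on $\AA_0$.

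For \eqref{eq:heights_Q}, by the first part it suffices to show $|w_{n-1}|\asymp Q_{0,n-1}$. I would unroll \eqref{eq:lengths} inductively into
\[
|w_n|=Q_{0,n}+\sum_{j=0}^{n-1}Q_{j+1,n}\Bigl(\sum_{i=0}^{q_j-1}a_{j,i}\Bigr),
\]
which immediately gives the lower bound $|w_n|\ge Q_{0,n}$. For the upper bound, bounding the inner sum by $Mq_j\le M(q_j+1)$ and dividing by $Q_{0,n}$ yields $|w_n|/Q_{0,n}\le 1+M\sum_{j\ge 0}1/Q_{0,j}$; since $\WW$ is standard we have $q_j+1\ge 3$, hence $Q_{0,j}\ge 3^j$, so the series converges to a constant independent of $n$.

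The only mildly delicate step is the geometric estimate in the last part: standardness of $\WW$ is precisely what ensures that the cumulative spacer contribution in the recursion stays a bounded fraction of the leading term $Q_{0,n}$. The rest follows immediately from \Cref{l:natural_subs} and the finiteness of $\AA_\WW$.
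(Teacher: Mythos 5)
Your proof is correct, but it reaches \eqref{eq:height_vectors} and \eqref{eq:heights_Q} by a genuinely different route than the paper. For \eqref{eq:height_vectors} the paper computes $h_{n+1}(a)=\sum_{b}h_n(b)M_{\tau_n}(b,a)$ directly by induction, using \eqref{eq:fn}, \eqref{eq:sum_fn} and \eqref{eq:lengths}; you instead observe that $\tau_n$ and $\widetilde{\tau}_n$ have the same composition matrix (by \eqref{eq:tau} and \eqref{eq:tau_tilde} their images are cyclic rearrangements of one another), so $|\tau_{[0,n)}(a)|=|\widetilde{\tau}_{[0,n)}(a)|$ and \Cref{l:natural_subs} gives the formula at once. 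This is cleaner and avoids the bookkeeping with the vectors $f_n$. For \eqref{eq:heights_Q} the paper notes that $\tau_{[1,n)}$ has constant length $Q_{0,n-1}$ by \eqref{eq:constant_length} and that $\tau_0$ distorts lengths by a bounded factor; you instead unroll the recursion \eqref{eq:lengths} into $|w_n|=Q_{0,n}+\sum_{j=0}^{n-1}Q_{j+1,n}\bigl(\sum_i a_{j,i}\bigr)$ and bound the accumulated spacer contribution by a convergent geometric series, using standardness ($q_j\ge 2$, hence $Q_{0,j}\ge 3^j$). Both are valid; the paper's is a one-line consequence of constant length, while yours yields a slightly more explicit expansion of $|w_n|$ in terms of $Q_{0,n}$. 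The middle estimate \eqref{eq:proportional_heights} is handled essentially the same way in both. One point worth making explicit in your write-up: standardness of $\WW$ is a standing assumption wherever $\btau_\WW$ as in \eqref{eq:tau} is used (it is what makes the morphisms proper and well defined), so your appeal to $q_j\ge 2$ is legitimate but should be flagged as relying on that convention.
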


\begin{proof}
The computation of $h_1$ is clear from the definition.
Assume that \eqref{eq:height_vectors} holds for $n \ge 1$.
For $a \in \AA_{n+1}$, by using \eqref{eq:fn}, \eqref{eq:sum_fn} and \eqref{eq:lengths}, we obtain
\begin{align*}
h_{n+1}(a)
&= \sum_{b \in \AA_n} h_n(b) M_{\tau_n}(b, a)
= h_n(a)(1 + f_n(a)) + \sum_{b \in \AA_n,\ b \not= a} h_n(b) f_n(b)\\
&= (a + |w_{n-1}|)(1 + f_n(a)) +
\sum_{b \in \AA_n,\ b \not= a} (b + |w_{n-1}|) f_n(b)\\
&= a + |w_{n-1}| + \sum_{b \in \AA_n} f_n(b) \cdot |w_{n-1}| + \sum_{b \in \AA_n} f_n(b) \cdot b\\
&= a + (q_{n-1} + 1) |w_{n-1}| + \sum_{i=0}^{q_{n-1} - 1} a_{n-1,i}\\
&= a + |w_n|,
\end{align*}
proving \eqref{eq:height_vectors} by induction.
The estimate \eqref{eq:proportional_heights} follows directly from \eqref{eq:height_vectors}.

By definition of the morphism $\tau_0$, there exists a constant $L \ge 1$ such that 
\[
L^{-1} |w| \le
|\tau_0(w) | \le
L |w|, \quad
w \in \AA_1^\ast.
\]
Let $a \in \AA_n$, $n \ge 1$.
By \eqref{eq:constant_length}, we have
\begin{align*}
h_n(a)
= |\tau_0 \circ \tau_{[1,n)}(a)|
\le L |\tau_{[1,n)}(a)|
= L \prod_{i=1}^{n-1} |\tau_i|
= L Q_{0,n-1}.
\end{align*}
Analogously, we obtain $L^{-1} Q_{0,n-1} \le h_n(a)$, thus obtaining \eqref{eq:heights_Q}.
\end{proof}

The composition matrices of the directive sequence $\btau_\WW$ can be computed as
\begin{equation}\label{eq:composition_matrices}
M_{\tau_0}
= \matriz{1 & \ldots & 1 \\ a_1 & \ldots & a_\ell}, \quad
M_{\tau_n}
= I_{n, n+1} + f_n \cdot \uu_n, \quad n \ge 1,
\end{equation}
where the matrix $I_{n, n+1}$ is given for each $a \in \AA_n$ and $b \in \AA_{n+1}$ by $I_{n, n+1}(a, b) = 1$ if $a = b$ and $0$ otherwise and $\uu_n$ is the row vector of ones in $\RR^{\AA_{n+1}}$.

Let $n_0 \in \NN$ be such that $\AA_n = \AA_\WW$ for all $n \ge n_0$, $I$ be the identity matrix in $\RR^{\AA_\WW}$ and $\uu$ be the row vector of ones in $\RR^{\AA_\WW}$.

\begin{lemma}\label{l:product}
Let $g_1, g_2, \ldots, g_n$ be column vectors indexed by a finite alphabet $\AA$.
Let
\[
A_i
= I + g_i \cdot \uu, \quad
1 \le i \le n,
\]
where $I$ is the identity in $\RR^\AA$ and $\uu$ is the row vector of ones in $\RR^\AA$.
Then
\[
A_1 A_2 \ldots A_n
= I + \left(\sum_{k = 1}^n \prod_{j = k+1}^n (1 + |g_j|) g_k\right) \cdot \uu, \quad
n \ge 1.
\]
and
\[
A_i^{-1}
= I - \frac{g_i}{|g_i| + 1} \cdot \uu, \quad
1 \le i \le n
\]
\end{lemma}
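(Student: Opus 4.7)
The plan is to prove both formulas by straightforward algebraic manipulation, exploiting the key identity $\uu \cdot g = |g|$ for any column vector $g$ (since $\uu$ is the all-ones row vector). This turns products of the form $g \uu \cdot g' \uu$ into scalar multiples of $g \uu$, which keeps the rank-one structure intact throughout.

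For the product formula, I would proceed by induction on $n$. The base case $n=1$ gives $A_1 = I + g_1 \cdot \uu$ with the empty product convention $\prod_{j=2}^{1}(1+|g_j|) = 1$, matching the statement. For the inductive step, assume the formula holds for $n$ and compute $A_1 \cdots A_n A_{n+1}$ as
\[
\left(I + \left(\textstyle\sum_{k=1}^{n} \prod_{j=k+1}^{n}(1+|g_j|)\, g_k\right) \uu\right)\left(I + g_{n+1} \uu\right).
\]
Expanding and using $\uu \cdot g_{n+1} = |g_{n+1}|$, the cross term becomes $\bigl(\sum_{k=1}^{n} \prod_{j=k+1}^{n}(1+|g_j|)\, g_k\bigr)|g_{n+1}|\, \uu$. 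Combining with the linear terms yields $I + \bigl(\sum_{k=1}^{n}\prod_{j=k+1}^{n+1}(1+|g_j|)\, g_k + g_{n+1}\bigr)\uu$, which is precisely the $n+1$ version of the formula (the last term matches the $k = n+1$ summand with empty product equal to $1$).

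For the inverse formula, I would just verify directly that the candidate is a right (hence two-sided) inverse. Compute
\[
(I + g_i \uu)\left(I - \tfrac{g_i}{|g_i|+1} \uu\right) = I + g_i \uu - \tfrac{g_i}{|g_i|+1}\uu - \tfrac{(\uu \cdot g_i)}{|g_i|+1}\, g_i \uu,
\]
and use $\uu \cdot g_i = |g_i|$ so that the coefficient of $g_i \uu$ becomes $1 - \tfrac{|g_i|}{|g_i|+1} = \tfrac{1}{|g_i|+1}$, cancelling the middle term. Note this assumes $|g_i| \neq -1$, which holds in the intended application where $|g_i| \ge 0$.

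No serious obstacle is expected; the only thing to watch is the empty product convention for $k = n$ in the summation and keeping the order of multiplication straight (since $g_k \uu$ and $g_{n+1} \uu$ do not commute in general as matrices, but their product reduces scalarly via $\uu g = |g|$).
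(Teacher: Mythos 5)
Your proposal is correct and follows essentially the same route as the paper: induction on $n$ for the product formula using the identity $\uu \cdot g = |g|$ to collapse the cross term, and a direct verification that the stated candidate inverts $A_i$. The only cosmetic difference is that you make the empty-product convention and the nonvanishing of $|g_i|+1$ explicit, which the paper leaves implicit.
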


\begin{proof}
It is easy to check that the inverse of $A_i$ is as given.
The formula for the product $A_1 A_2 \ldots A_n$ is clearly true for $n = 1$.
Suppose that it is true for $n$ and let us show that it is true for $n+1$.
In fact,
\begin{align*}
A_1 \ldots A_n A_{n+1}
=& \left(I + \left(\sum_{k = 1}^n \prod_{j = k+1}^n (1 + |g_j|) g_k \right) \cdot \uu \right) 
\left(I + g_{n+1} \cdot \uu \right)\\
=& I + (1 + |g_{n+1}|) \left(\sum_{k = 1}^n \prod_{j = k+1}^n (1 + |g_j|) g_k \right) \cdot \uu + g_{n+1} \cdot \uu\\
=& I + \left(\sum_{k = 1}^n \prod_{j = k+1}^{n+1} (1 + |g_j|) g_k \right) \cdot \uu + g_{n+1} \cdot \uu\\
=& I + \left(\sum_{k = 1}^{n+1} \prod_{j = k+1}^{n+1} (1 + |g_j|) g_k\right)\cdot \uu.
\end{align*}
\end{proof}

By \Cref{l:product} and \eqref{eq:Qmn}, we have
\begin{equation}\label{eq:long_product}
M_{\tau_m} M_{\tau_{m+1}} \ldots M_{\tau_{n-1}}
= I + f_{m,n} \cdot \uu, \quad
n_0 \le m < n,
\end{equation}
where
\begin{equation}\label{eq:fmn}
f_{m,n}
= \sum_{k = m}^{n-1} Q_{k, n-1} f_k.
\end{equation}
Observe that
\begin{equation}\label{eq:fmn_Qmn}
|f_{m,n}| + 1
= Q_{m-1, n-1}.
\end{equation}

Thus, \Cref{l:product} implies
\begin{equation}\label{eq:inverse_product}
(M_{\tau_m} M_{\tau_{m+1}} \ldots M_{\tau_{n-1}})^{-1}
= I - \frac{f_{m,n}}{Q_{m-1, n-1}} \cdot \uu, \quad
n_0 \le m < n.
\end{equation}

\begin{example}
Let $a < b < c < d$ be positive integers.
Define a sequence $\WW = (w_n)_{n \ge 0}$ of generating words such that for infinitely many values of $n$
\[
w_{n+1}
= w_n 1^a w_n 1^b w_n \quad
\text{and} \quad
w_{n+1} = w_n 1^c w_n 1^d w_n.
\]
Hence $\AA_\WW = \{a, b, c, d\}$.
The directive sequence $\btau_\WW$ consists of two morphisms $\tau_{a,b}$ and $\tau_{c,d}$, each one occurring infinitely many times in $\btau_\WW$, defined, for $u \in \AA_\WW$, by
\begin{align*}
\tau_{a,b}(u)
&= b u a,\\
\tau_{c,d}(u)
&= d u c.
\end{align*}
The composition matrices indexed by $\AA_\WW$ are
\[
M_{\tau_{a,b}}
= \matriz{2 & 1 & 1 & 1 \\ 1 & 2 & 1 & 1 \\ 0 & 0 & 1 & 0 \\ 0 & 0 & 0 & 1}, \quad
M_{\tau_{c,d}}
= \matriz{1 & 0 & 0 & 0 \\ 0 & 1 & 0 & 0 \\ 1 & 1 & 2 & 1 \\ 1 & 1 & 1 & 2}.
\]
\end{example}

\section{Topological dynamical properties of minimal Ferenczi subshifts}\label{s:top_dyn}

In what follows $\WW$ is a standard sequence as given by \eqref{eq:generating_words} generating a minimal Ferenczi subshift.
Let $(X_\WW , S)$ be the subshift it generates and $\btau_\WW$ the associated directive sequence given by \eqref{eq:tau}.

\subsection{Unique ergodicity}

The unique ergodicity of Ferenczi subshifts is a folklore result \cite[Section 1.1.4]{Fer97}.
We provide a short proof below.

\begin{proposition}\label{p:uniq_ergodic}
The system $(X_\WW, S)$ is uniquely ergodic.
\end{proposition}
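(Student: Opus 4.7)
The plan is to combine the explicit product formula in \eqref{eq:long_product} with the normalization coming from the height estimate \eqref{eq:heights_Q} to pin down every invariant probability measure. First, an $S$-invariant Borel probability measure $\mu$ exists by Krylov--Bogolyubov, so it suffices to show that any two invariant probability measures coincide. By \Cref{p:nested}, the CKR partitions $(\TT_n)_{n\geq 0}$ associated with $\btau_\WW$ have incidence matrices $M_n = M_{\tau_n}$, so the measure vectors $\mu_n = (\mu(B_n(a)))_{a \in \AA_n}$ satisfy
\[
\mu_m = M_{\tau_m} M_{\tau_{m+1}} \cdots M_{\tau_{n-1}}\,\mu_n, \qquad 0 \le m < n,
\]
together with the probability constraint $\sum_{a \in \AA_n} h_n(a)\,\mu_n(a) = 1$ from \eqref{eq:prob_measure}. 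The height estimate \eqref{eq:heights_Q} together with this normalization gives $\sum_a \mu_n(a) \le L/Q_{0,n-1}$, so $\sum_a \mu_n(a) \to 0$ as $n \to +\infty$.

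Next, I would exploit the precise form of the product matrix for $n_0 \le m < n$. By \eqref{eq:long_product}, $M_{\tau_m} \cdots M_{\tau_{n-1}} = I + f_{m,n} \cdot \uu$, and since $\uu\,\mu_n = \sum_{b \in \AA_\WW} \mu_n(b)$, applying this to the column vector $\mu_n$ yields
\[
\mu_m(a) = \mu_n(a) + \Big(\sum_{b \in \AA_\WW} \mu_n(b)\Big)\, f_{m,n}(a), \qquad a \in \AA_\WW.
\]
Summing over $a$ and using \eqref{eq:fmn_Qmn} gives $\sum_{a} \mu_m(a) = Q_{m-1,n-1}\sum_{b} \mu_n(b)$, so eliminating the factor $\sum_b \mu_n(b)$ leads to
\[
\frac{f_{m,n}(a)}{Q_{m-1,n-1}} = \frac{\mu_m(a) - \mu_n(a)}{\sum_{a'} \mu_m(a')}.
\]
Letting $n \to +\infty$, the fact that $\mu_n(a) \to 0$ forces the right-hand side to converge to $\mu_m(a)/\sum_{a'} \mu_m(a')$. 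Hence the limit $\ell_m(a) := \lim_n f_{m,n}(a)/Q_{m-1,n-1}$ exists and depends only on $\btau_\WW$, not on $\mu$.

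Finally, I would close the argument by noting that this identity pins down the direction of each $\mu_m$ for $m \ge n_0$: the normalized vector $\mu_m/\sum_{a'}\mu_m(a')$ equals $\ell_m$, and the scalar $\sum_{a'} \mu_m(a')$ is then determined by $\sum_a h_m(a)\,\mu_m(a) = 1$. So $\mu_m$ is uniquely determined for every $m \ge n_0$, and hence for every $m < n_0$ via the finite product $\mu_m = M_{\tau_m} \cdots M_{\tau_{n_0-1}} \mu_{n_0}$. By (KR4) in \Cref{s:kakutani_rokhlin}, the atoms $S^k B_n(a)$ generate the topology, so two invariant probability measures agreeing on all of them must coincide.

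The only nontrivial step is the bookkeeping in the middle paragraph; once one observes that $(I + f\cdot\uu)v = v + (\uu v)\,f$ for a column vector $v$, everything reduces to the already-proven \eqref{eq:long_product} and \eqref{eq:heights_Q}. I do not expect any serious obstacle, as the combinatorial identities have all been laid out in \Cref{l:product} and \Cref{l:computation_heights}; the point is simply to observe that primitivity together with the constant-length structure of the $\tau_n$ for $n \ge 1$ forces the cone of invariant measures to collapse to a single ray.
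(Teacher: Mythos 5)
Your argument is correct and follows essentially the same route as the paper's proof: both use the product formula \eqref{eq:long_product} together with \eqref{eq:fmn_Qmn} and the height estimate \eqref{eq:heights_Q} to show that the normalized measure vector $\mu_m/\sum_{a'}\mu_m(a')$ must equal the $\mu$-independent limit $\lim_n f_{m,n}/Q_{m-1,n-1}$ (the paper's $v_m$, up to normalization), after which the probability constraint $\sum_a h_m(a)\mu_m(a)=1$ fixes the scalar. The only difference is bookkeeping: the paper normalizes via $t_m = Q_{0,m-1}\mu_m$ and tracks $|t_n|$, while you normalize by $\sum_a\mu_m(a)$ directly, which is equivalent.
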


\begin{proof}
The directive sequence $\btau_\WW$ defines a sequence of measure vectors $(\mu_n)_{n \ge 0}$ given in \Cref{ss:invariant_measures}. 
By \eqref{eq:inv_measure}, to prove unique ergodicity of $(X_\WW, S)$ it is sufficient to prove that the vector $\mu_n$ is uniquely determined for infinitely many values of $n$.

\medskip

Recall the definition of $Q_{0,m}$ for $m \ge 1$ in \eqref{eq:Qmn}.
Let us consider the vectors $(t_m)_{m \ge 1}$ defined by $t_m = Q_{0,m-1} \mu_m$, $m \ge 1$.
By \eqref{eq:inv_measure} we have
\[
t_m
= Q_{0,m-1} \mu_m
= \frac{Q_{0,m}}{q_{m-1} + 1} (M_m \mu_{m+1})
= \frac{1}{q_{m-1}+1} M_m t_{m+1}, \quad
m \ge 1.
\]
Consequently, from equations \eqref{eq:long_product} and \eqref{eq:fmn}, 
\begin{align*}
t_m
&= \frac{1}{Q_{m-1,n-1}} M_m M_{m+1} \cdots M_{n-1} t_n
= \frac{1}{Q_{m-1,n-1}}
\left(I + \left( \sum_{k=m}^{n-1} Q_{k,n-1} f_k \right) \cdot \uu \right) t_n\\
&= Q_{0,m-1} \mu_n + \left(\sum_{k=m}^{n-1} \frac{f_k}{
Q_{m-1,k}}\right) \cdot |t_n|, \quad
n_0 \le m < n.
\end{align*}
It can be checked that $\sum_k \frac{f_k}{Q_{m-1,k}}$ converges, we define $v_m = \sum_{k=m}^\infty \frac{f_k}{Q_{m-1,k}}$.

We deduce $L^{-1} \le |t_n| \le L$ from \eqref{eq:heights_Q} and, since $\mu_n \to 0$ as $n \to +\infty$, there exists a sequence of nonnegative numbers $(\alpha_m)_{m \ge n_0}$ such that
\[
\mu_m
= \alpha_m v_m, \quad
m \ge n_0.
\]
We deduce $\alpha_{n_0} = \frac{1}{|P_{0,n_0} v_{n_0}|}$ from \eqref{eq:inv_measure}.
Again, from \eqref{eq:inv_measure} we obtain
\[
\alpha_m
= \frac{|v_{n_0}|}{|P_{0, n_0} v_{n_0}| |P_{n_0, m} v_{m}|}, \quad
m \ge n_0
\]
and finally
\[
\mu_m
= \left(
\frac{|v_{n_0}|}{|P_{0, n_0} v_{n_0}| |P_{n_0, m} v_{m}|}\right) v_m, \quad
m \ge n_0.
\]
This completes the proof.
\end{proof}

\subsection{Clean directive sequences}\label{ss:clean}

To go further in the study of Ferenczi subshifts we need the following notion inspired by the definition of \emph{clean Bratteli diagram} given in \cite[Section 5]{BDM10}, see also \cite[Theorem 3.3]{BKMS13}.

Let $\btau = (\tau_n : \AA_n^\ast \to \AA_n^\ast)_{n \ge 0}$ be a proper, primitive and recognizable directive sequence and let $\mu$ be an ergodic invariant probability measure of $(X_{\btau}, S)$.

We say that $\btau$ is \emph{clean} with respect to $\mu$ if:
\begin{enumerate}
    \item There exists $n_0 \in \NN$ such that $\AA_n = \AA_{n_0}$ for all $n \ge n_0$.
    Put $\AA = \AA_{n_0}$.
    
    \item There exists a constant $c > 0$ and $\AA_\mu \subseteq \AA$ such that
    \begin{equation}\label{eq:clean_ineq}
    \mu(\TT_n(a))
    \ge c, \quad
    n \ge n_0, \quad
    a \in \AA_\mu, \quad
    \text{and}
    \end{equation}
    \[
    \lim_{n \to +\infty}
    \mu(\TT_n(a))
    = 0, \quad
    a \in \AA \setminus \AA_\mu.
    \]
\end{enumerate}

We remark that we can always contract the directive sequence $\btau$ so that it becomes clean with respect to $\mu$.
If $\AA_\mu = \AA$, we say that $\btau$ is of \emph{exact finite rank}.

It is proven in \cite{BKMS13} that exact finite rank of $\btau$ implies that $(X_{\btau}, S)$ is uniquely ergodic.
However, the converse is not true, even for Ferenczi subshifts.

\begin{example}[Ferenczi subshift with non exact rank]
Consider a sequence of generating words $\WW$ with associated cutting parameters $(q_n)_{n \ge 0}$, as defined in \eqref{eq:generating_words}.
Suppose that $q_n \to +\infty$ as $n \to +\infty$ and that there exists a letter $a^\ast$ in $\AA_\WW$ such that $f_n(a^\ast) + 1 \le C$ for all large enough values of $n$ and some value $C > 0$.

If $\mu$ is the unique invariant probability measure of $(X_\WW, S)$, from \eqref{eq:prob_measure} we obtain
\begin{align*}
\mu(\TT_n(a^\ast))
&= h_n(a^\ast) \mu_n(a^\ast)
= h_n(a^\ast) \sum_{b \in \AA_\WW} M_n(a^\ast, b) \frac{\mu(\TT_{n+1}(b))}{h_{n+1}(b)}\\
&\le \frac{C h_n(a^\ast)}{\min_{b \in \AA_\WW} h_{n+1}(b)}
\le \frac{C K}{q_{n-1} + 1},
\end{align*}
where we used
\begin{align*}
h_{n+1}(b)
&= \sum_{c \in \AA_\WW} h_n(c) M_n(c, b)
\ge K^{-1} h_n(a^\ast)
\sum_{c \in \AA_\WW} M_n(c, b)\\
&= K^{-1} h_n(a^\ast)(q_{n-1}+1), \quad
b \in \AA_\WW.
\end{align*}
Therefore $\mu(\TT_n(a^\ast)) \to 0$ as $n \to +\infty$ and $\btau_\WW$ is not exact of finite rank.
\end{example}

A subshift $(X, S)$ is \emph{linearly recurrent} if it is minimal and there exists a constant $K > 0$ such that if $u \in \LL(X)$ and $w$ is a right return word to $u$ in $X$, then
\[
|w|
\le K |u|.
\]
We refer to \cites{Dur00, DP22} for more details on linearly recurrent shifts.
In \cite{BKMS13} it is shown that linearly recurrent subshifts have exact finite rank and that the converse is not true.
The example below shows that the converse is not true, even in the family of Ferenczi subshifts.

\begin{example}[Ferenczi subshift with exact finite rank that is not linearly recurrent]
Consider a sequence of generating words $\WW$ such that $d_\WW = 2$.
Let $\AA_\WW = \{a, b\}$ and define the morphism $\tau_n$ by
\[
\tau_n(a) = a^n b^{2n-1} a b \quad
\text{and} \quad
\tau_n(b) = a^n b^{2n-1} b b, \quad
n \ge 1.
\]

The composition matrix of $\tau_n$ indexed by $\AA_\WW$ is $M_{\tau_n} = \begin{psmallmatrix} n+1 & n\\ 2n & 2n+1 \end{psmallmatrix}$, and hence $\btau_\WW$ is of exact finite rank \cite[Proposition 5.7]{BKMS13}.
Observe that for all $n$ the word $\tau_{[0,n)} (a)^{n+1}$ belongs to the language of $X_\WW$. 
Hence the subshift $(X_\WW, S)$ is not linearly recurrent, see \cite[Theorem 24]{DHS99}.
\end{example}

In the sequel we characterize exact finite rank of $\btau_\WW$.

\begin{proposition}\label{p:exact_rank}
For $a \in \AA_\WW$, we have $\liminf_{m \to +\infty} \mu(\TT_m(a)) > 0$ if and only if
\begin{equation}\label{eq:exact_rank_tau}
\liminf_{m \to +\infty} \sum_{k=m}^\infty \frac{f_k (a)}{Q_{m-1,k}}
> 0.
\end{equation}
In particular, $\btau_\WW$ is of exact finite rank if and only if \eqref{eq:exact_rank_tau} holds for all $a \in \AA_\WW$.
\end{proposition}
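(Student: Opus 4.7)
The plan is to piggy-back on the proof of \Cref{p:uniq_ergodic}, where it is shown that for $m \ge n_0$ the (unique) invariant measure satisfies $\mu_m = \alpha_m v_m$, with
\[
v_m = \sum_{k=m}^{\infty} \frac{f_k}{Q_{m-1,k}} \quad\text{and}\quad \alpha_m |P_{n_0,m} v_m| = \alpha_{n_0} |v_{n_0}|.
\]
Since $\mu(\TT_m(a)) = h_m(a)\mu_m(a) = h_m(a)\alpha_m v_m(a)$ and since \eqref{eq:heights_Q} gives $h_m(a) \asymp Q_{0,m-1}$ uniformly in $a \in \AA_\WW$, the equivalence will reduce to showing that the scalar $Q_{0,m-1}\alpha_m$ is bounded away from $0$ and $\infty$, so that $\mu(\TT_m(a))$ and $v_m(a) = \sum_{k \ge m} f_k(a)/Q_{m-1,k}$ are commensurable.

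The first key step is a telescoping observation that removes the factor $|v_m|$ from the formula for $\alpha_m$. Using $|f_k| = q_{k-1}$ from \eqref{eq:fn} and $Q_{m-1,k} = Q_{m-1,k-1}(q_{k-1}+1)$, one writes
\[
\frac{q_{k-1}}{Q_{m-1,k}} = \frac{1}{Q_{m-1,k-1}} - \frac{1}{Q_{m-1,k}},
\]
so that the telescoping sum gives $|v_m| = 1$ for every $m \ge n_0$. Combining with \eqref{eq:long_product} and \eqref{eq:fmn_Qmn}, one obtains
\[
P_{n_0,m} v_m = v_m + f_{n_0,m}\,|v_m| = v_m + f_{n_0,m},
\]
hence $|P_{n_0,m} v_m| = 1 + |f_{n_0,m}| = Q_{n_0-1,m-1}$, which yields the clean identity $\alpha_m = \alpha_{n_0}|v_{n_0}|/Q_{n_0-1,m-1}$.

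The second step assembles the equivalence. Since $Q_{0,m-1}/Q_{n_0-1,m-1} = Q_{0,n_0-1}$ is a positive constant independent of $m$, the previous step shows $Q_{0,m-1}\alpha_m$ is a (positive) constant. Using \eqref{eq:heights_Q} we then get constants $c_1, c_2 > 0$ with
\[
c_1 \, v_m(a) \le \mu(\TT_m(a)) \le c_2 \, v_m(a), \qquad m \ge n_0,\ a \in \AA_\WW,
\]
which gives $\liminf_m \mu(\TT_m(a)) > 0$ if and only if $\liminf_m v_m(a) > 0$, \emph{i.e.}\ \eqref{eq:exact_rank_tau}. The final assertion about exact finite rank is immediate from the definition, since exact finite rank means $\liminf_m \mu(\TT_m(a)) > 0$ for every $a \in \AA_\WW$.

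There is no real obstacle: the hard work (existence of a limit direction for $\mu_m$, convergence of the series defining $v_m$) is already done inside \Cref{p:uniq_ergodic}. The only new ingredient is the telescoping identity $|v_m| = 1$, which is what collapses the normalization factor and makes the equivalence transparent.
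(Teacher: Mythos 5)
Your proof is correct, and it reaches the same quantitative heart of the matter as the paper --- namely that $\mu(\TT_m(a))$ is commensurable, uniformly in $m$ and $a$, with $v_m(a)=\sum_{k\ge m}f_k(a)/Q_{m-1,k}$, using \eqref{eq:long_product}, \eqref{eq:fmn_Qmn} and the height estimate \eqref{eq:heights_Q} --- but it gets there by a genuinely different derivation of $\mu_m(a)$. The paper invokes the frequency formula of \cite[Proposition 5.1]{BKMS13}, $\mu_m(a)=\lim_n|\tau_{[m,n)}(b)|_a/h_n(b)$ for $b\ne a$, and then sandwiches the prefactor $h_m(a)Q_{m-1,n-1}/h_n(b)$ between two constants. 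You instead recycle the identity $\mu_m=\alpha_m v_m$ already established inside the proof of \Cref{p:uniq_ergodic} and pin down the normalization exactly via the telescoping identity $|v_m|=1$ (which is correct: $q_{k-1}/Q_{m-1,k}=1/Q_{m-1,k-1}-1/Q_{m-1,k}$ and $Q_{m-1,N}\to\infty$ since $\WW$ is standard), giving $Q_{0,m-1}\alpha_m$ constant and hence the clean two-sided bound $c_1v_m(a)\le\mu(\TT_m(a))\le c_2v_m(a)$. What your route buys is self-containedness (no external citation beyond what the paper has already proved) and a sharper conclusion --- an exact formula $\mu_m=\text{const}\cdot v_m/Q_{0,m-1}$ rather than only two-sided estimates; what the paper's route buys is brevity, since the BKMS formula does the work of identifying the limit direction in one line. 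The closing reduction of exact finite rank to $\liminf_m\mu(\TT_m(a))>0$ for all $a$ is handled at the same (slightly informal, but harmless) level of detail as in the paper.
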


\begin{proof}
Let $n_0 \in \NN$ be such that $\AA_{n_0} = \AA_\WW$.
Consider $m \ge n_0$ and $a \in \AA_\WW$.
Since $d_\WW \ge 2$, there exists $b \in \AA_\WW$ with $b \not= a$.
By \cite[Proposition 5.1]{BKMS13} one has
\[
\mu_m(a)
= \lim_{n \to +\infty } \frac{|\tau_{[m,n)} (b)|_a}{h_n(b)}.
\]
By \eqref{eq:long_product} and \eqref{eq:fmn}, since $b \not= a$ one gets
\[
\frac{|\tau_{[m,n)}(b)|_a}{h_n(b)}
= \frac{\sum_{k=m}^{n-1} Q_{k, n-1} f_k(a)}{h_n(b)}.
\]
Hence by \eqref{eq:prob_measure}
\begin{align*}
\mu(\TT_m(a))
&= h_m(a) \mu_m(a)
= \lim_{n \to +\infty}
\frac{h_m(a)}{h_n(b)} \sum_{k=m}^{n-1} Q_{k, n-1} f_k(a)\\
&= \lim_{n \to +\infty}
\frac{h_m(a)}{h_n(b)} Q_{m-1,n-1} \sum_{k=m}^{n-1} \frac{f_k(a)}{Q_{m-1, k}}.
\end{align*}
Using \eqref{eq:heights_Q}, there exists a constant $C \ge 1$ such that
\[
C^{-1}
\le \frac{h_m(a)}{h_n(b)} Q_{m-1,n-1}
\le C, \quad
a, b \in \AA_\WW, \quad
n_0 \le m < n.
\]
Therefore $\liminf_{m \to +\infty} \mu(\TT_m(a)) > 0$ if and only if
\[
\liminf_{m \to +\infty} \sum_{k=m}^\infty \frac{f_k (a)}{Q_{m-1,k}}
> 0,
\]
where it can be checked that $\sum_k f_k(a) / Q_{m-1, k}$ converges.
\end{proof}

\subsection{Toeplitz induced systems}

Let $(X_\WW, S)$ be a minimal Ferenczi subshift and $\btau_\WW = (\tau_n : \AA_{n+1}^\ast \to \AA_n^\ast)_{n \ge 0}$ be the directive sequence given by \eqref{eq:tau}.
Denote by $\btau_\WW' = (\tau_{n+1} : \AA_{n+2}^\ast \to \AA_{n+1}^\ast)_{n \ge 0}$ the shifted directive sequence of $\btau_\WW$ and let $U_\WW = \tau_0(X_{\btau_\WW'})$.
From \Cref{c:induced_sadic} the induced system $(U_\WW, S_{U_\WW})$ of $(X_\WW, S)$ on $U_\WW$ is topologically conjugate to the $\SS$-adic subshift $(X_{\btau_\WW'}, S)$.

Recall from \Cref{ss:directive_sequences} and \Cref{ss:recognizable_sequence} that each morphism $\tau_n$ has constant length and is rotationally conjugate to the right permutative morphism $\widetilde{\tau}_n$ for $n \ge 1$.
Therefore, $\btau_\WW'$ is recognizable and the subshift $(X_{\btau_\WW'}, S)$ is minimal and aperiodic.
Moreover, the associated sequence of incidence matrices $(M_n)_{n \ge 0}$ coincides with the sequence of composition matrices $(M_{\tau_{n+1}})_{n \ge 0}$ by \Cref{p:nested}.

We deduce that the latter has the \emph{equal path number property}, \emph{i.e.}, for each $n \ge 0$ the sum of each column of $M_n$ is constant.
This implies that $(X_{\btau_\WW'}, S)$ is topologically conjugate to a minimal Toeplitz subshift \cite[Theorem 8]{GJ00}.
We recall that a subshift $(X, S)$ with $X \subseteq \AA^\ZZ$ is Toeplitz if $X$ is the closure of the orbit $\{S^n x : n \in \ZZ\}$ for some sequence $x = (x_n)_{n \in \ZZ} \in \AA^\ZZ$ such that for all $n \in \ZZ$ there exists $p \in \NN$ with $x_n = x_{n + kp}$ for all $k \in \ZZ$.

We will prove that this Toeplitz subshift is \emph{mean equicontinuous}.
We recall that a topological dynamical system  $(X ,T)$ with a metric $d$ on $X$ is mean equicontinuous if for every $\epsilon > 0$ there exists $\delta > 0$ such that if $d(x, y) \le \delta$, then $\rho_{\text{b}}(x, y) \le \epsilon$.
Here, $\rho_{\text{b}}$ denotes the Besicovitch pseudometric given by
\[
\rho_{\text{b}}(x, y)
= \limsup_{n \to +\infty} \frac{1}{n} \sum_{k=0}^{n-1} d(T^k x, T^k y), \quad
x, y \in X.
\]

Let $(X, T)$ be a minimal system.
Denote by $(\Xeq, \Teq)$ to the maximal equicontinuous factor of $(X, T)$, by $\nu$ to its unique invariant probability measure and let $\pieq : X \to \Xeq$ be the corresponding factor map.

The system $(X, T)$ is mean equicontinuous if and only if it is uniquely ergodic (with unique invariant probability measure $\mu$) and $\pieq$ is a measurable isomorphism between the systems $(X, T, \mu)$ and $(\Xeq, \Teq, \nu)$ \cites{LTY15, DG16}. 
In particular, this implies that the system $(X, T, \mu)$ has discrete spectrum, \emph{i.e.}, there exists an orthonormal basis of $L^2(X, \mu)$ consisting of measurable eigenfunctions of $(X, T)$.
We refer to \cite{GJY21} for more details about mean equicontinuity.

\medskip

In what follows we will need the following definitions.
For a sequence of positive integers $(p_n)_{n \ge 0}$ such that $p_n$ divides $p_{n+1}$ for $n \ge 0$, the \emph{odometer} given by this sequence is the system $(\ZZ_{(p_n)_{n \ge 0}}, T)$, where
\[
\ZZ_{(p_n)_{n \ge 0}}
= \varprojlim \ZZ / p_n \ZZ
= \left\{ (x_n)_{n \ge 0} \in \prod_{n \ge 0} \ZZ / p_n \ZZ : x_{n+1} \equiv x_n \pmod{p_n},\ n \ge 0 \right\}
\]
and the map $T : \ZZ_{(p_n)_{n \ge 0}} \to \ZZ_{(p_n)_{n \ge 0}}$ is given by
\[
T((x_n)_{n \ge 0})
= (x_n + 1 \pmod{p_n})_{n \ge 0}.
\]
Let $\btau = (\tau_n : \AA_{n+1}^\ast \to \AA_n^\ast)_{n \ge 0}$ be a primitive, proper and recognizable directive sequence such that the morphism $\tau_n$ has constant length for each $n \ge 0$.
It is classical to show that the maximal equicontinuous factor of $(X_{\btau}, S)$ corresponds to the odometer $(\ZZ_{(|\tau_{[0,n)}|)_{n \ge 0}}, T)$ \cite{GJ00}.
The factor map $\pieq : X_{\btau} \to \ZZ_{(|\tau_{[0,n)}|)_{n \ge 0}}$ can be described as follows.
Let $x \in X_{\btau}$.
By recognizability of $\btau$, for every $n \ge 0$ there exists a letter $a_n(x)$ in $\AA_n$ and $k_n(x)$ with $0 \le k_n(x) < |\tau_{[0,n)}|$, uniquely determined, such that
\[
x
\in S^{k_n(x)} \tau_{[0,n)}([a_n(x)]).
\]
Then we define
\begin{equation}\label{eq:pieq}
\pieq(x)
= (k_n(x))_{n \ge 0}.
\end{equation}
It can be observed that $k_{n+1}(x) \equiv k_n(x) \pmod{|\tau_{[0,n)}|}$ for $x \in X_{\btau}$.

For a morphism $\tau : \AA^\ast \to \BB^\ast$ of constant length $|\tau|$, we say that it has a \emph{coincidence} at index $0 \le i < |\tau|$ if $\tau(a)_i = \tau(a')_i$ for every $a, a' \in \AA$.
The notion of coincidence has been used in \cite{Dek78} to characterize the discrete spectrum of constant length substitution systems.
See also \cite{Que87}.

\begin{proposition}
The system $(U_\WW, S_{U_\WW})$ is mean equicontinuous.
\end{proposition}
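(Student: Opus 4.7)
The plan is to reduce the statement to a property of the $\SS$-adic subshift $(X_{\btau_\WW'}, S)$ via \Cref{c:induced_sadic}, and then apply the characterization of mean equicontinuity from \cite{LTY15, DG16}: a minimal system is mean equicontinuous if and only if it is uniquely ergodic and the factor map onto its maximal equicontinuous factor is a measurable isomorphism.

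First I would observe that, by \Cref{c:induced_sadic}, the system $(U_\WW, S_{U_\WW})$ is topologically conjugate to $(X_{\btau_\WW'}, S)$. Since $(X_\WW, S)$ is uniquely ergodic by \Cref{p:uniq_ergodic}, so is any induced system on a clopen set, which gives unique ergodicity of $(U_\WW, S_{U_\WW})$ and hence of $(X_{\btau_\WW'}, S)$. Moreover, $\btau_\WW'$ is a primitive, proper, recognizable directive sequence of constant-length morphisms (since $|\tau_{n+1}| = q_n + 1$), so the preceding paragraphs identify the maximal equicontinuous factor of $(X_{\btau_\WW'}, S)$ with the odometer $(\ZZ_{(|\tau_{[1,n+1)}|)_{n \ge 0}}, T)$, whose unique invariant probability measure $\nu$ is Haar, and the factor map $\pieq$ is the analogue of \eqref{eq:pieq}.

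The core step is to show that $\pieq$ is a measurable isomorphism, which reduces to proving that $\pieq^{-1}(\xi)$ is a singleton for $\nu$-a.e.\ $\xi$. From the explicit formula \eqref{eq:tau}, the words $\tau_n(a)$ and $\tau_n(a')$ for distinct $a, a' \in \AA_{n+1}$ differ only at position $q_{n-1} - 1$; thus $\tau_n$ has exactly one \emph{non-coincidence position}. A short induction, using that a position $p = i \cdot |\tau_1| + j$ in $\tau_{[1,n+1)} = \tau_1 \circ \tau_{[2,n+1)}$ fails to be a coincidence if and only if $i$ is a non-coincidence of $\tau_{[2,n+1)}$ and $j$ is a non-coincidence of $\tau_1$, shows that each $\tau_{[1,n+1)}$ has a unique non-coincidence position $P_n \in \{0, \ldots, |\tau_{[1,n+1)}| - 1\}$.

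Now Borel--Cantelli closes the argument. Under Haar measure $\nu$, the projection $\xi \mapsto k_n(\xi)$ is uniformly distributed on $\ZZ/|\tau_{[1,n+1)}|\ZZ$, hence
\[
\nu\bigl(\{\xi : k_n(\xi) = P_n\}\bigr) = \frac{1}{|\tau_{[1,n+1)}|}.
\]
Since $\WW$ is standard we have $q_k \ge 2$, so $|\tau_{[1,n+1)}| = \prod_{k=0}^{n-1}(q_k+1) \ge 3^n$ and these measures are summable. By Borel--Cantelli, for $\nu$-a.e.\ $\xi$ one has $k_n(\xi) \neq P_n$ for all sufficiently large $n$; for any such $\xi$ and any $x, y \in \pieq^{-1}(\xi)$, we get $x_0 = \tau_{[1,n+1)}(a_n(x))_{k_n} = \tau_{[1,n+1)}(a_n(y))_{k_n} = y_0$, because $k_n$ is a coincidence position of $\tau_{[1,n+1)}$. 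Applying the same statement to $\Teq^j \xi$ for each $j \in \ZZ$ (using $\pieq \circ S = \Teq \circ \pieq$ and $\Teq$-invariance of $\nu$) and taking a countable intersection, we obtain $x = y$ for $\nu$-a.e.\ $\xi$, so $\pieq^{-1}(\xi)$ is a singleton and $\pieq$ is a measurable isomorphism. Mean equicontinuity follows from \cite{LTY15, DG16}. The main obstacle is the coincidence induction for the composition, but it turns out to be a clean counting argument thanks to the near-permutative structure of the morphisms $\tau_n$.
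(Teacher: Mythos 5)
Your proof is correct and follows the same overall strategy as the paper: identify $(U_\WW, S_{U_\WW})$ with $(X_{\btau_\WW'}, S)$ via \Cref{c:induced_sadic}, recognize the maximal equicontinuous factor as the odometer $(\ZZ_{(Q_{0,n})_{n\ge 0}}, T)$, prove that $\pieq$ is almost everywhere one-to-one using the coincidence structure of the morphisms, and invoke the characterization of mean equicontinuity from \cite{LTY15, DG16}. Where you differ is in the implementation of the injectivity step. The paper works level by level: it shows that if the $n$th digit $t_n(z)$ of $z$ lands on a coincidence index of the single morphism $\tau_n$ for infinitely many $n$, then the fiber $\pieq^{-1}(\{z\})$ is a singleton (the nested CKR towers have diameters tending to zero), and then bounds the bad set by a direct product computation $\prod_m \nu(D_m) = 0$. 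You instead establish by induction that each composed morphism $\tau_{[1,n+1)}$ has a \emph{unique} non-coincidence position $P_n$, apply Borel--Cantelli to the summable events $\{k_n(\xi) = P_n\}$ to conclude that all points in a typical fiber share their zero coordinate, and then propagate to all coordinates by equivariance and a countable intersection over $\ZZ$. Both routes work; the paper's version avoids your composition-coincidence induction and the coordinate-propagation step, while yours needs only one good level $n$ per coordinate rather than infinitely many. One small caveat: your stated equivalence (position $i\cdot|\tau_1|+j$ is a non-coincidence of $\tau_1\circ\tau_{[2,n+1)}$ iff $i$ and $j$ are non-coincidences of the respective factors) is not true for arbitrary morphisms --- the ``if'' direction requires that the distinct letters occurring at the inner non-coincidence position are still separated by $\tau_1$ at position $j$. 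It does hold here because, as your induction shows, the letter of $\tau_{[2,n+1)}(a)$ at its unique non-coincidence position is $a$ itself and the morphisms of \eqref{eq:tau} separate distinct letters exactly at position $q_0-1$; it would be worth making that explicit.
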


\begin{proof}
As was previously observed, the system $(U_\WW, S_{U_\WW})$ is topologically conjugate to the $\SS$-adic subshift $(X_{\btau_\WW'}, S)$.
Moreover, as in the proof of \Cref{p:uniq_ergodic}, this subshift is uniquely ergodic.
Denote by $\mu$ its unique invariant probability measure.

The directive sequence $\btau_\WW'$ is primitive, proper, recognizable and consists of morphisms of constant length.
Indeed, from \eqref{eq:tau} we have $|\tau_{n+1}| = q_n + 1$, $n \ge 0$.
Hence, the maximal equicontinuous factor of $(X_{\btau_\WW'}, S)$ is the odometer $(\ZZ_{(Q_{0,n})_{n \ge 0}}, T)$.
Denote by $\nu$ the unique invariant probability measure of this odometer and let $\pieq : X_{\btau_\WW'} \to \ZZ_{(Q_{0,n})_{n \ge 0}}$ be the factor map given by \eqref{eq:pieq}.
Denote by $(\TT_n')_{n \ge 0}$ the nested sequence of CKR partitions of $(X_{\btau_\WW'}, S)$ given by \eqref{eq:rep_sadic}.

For each $z = (z_n)_{n \ge 0}$ in $\ZZ_{(Q_{0,n})_{n \ge 0}}$, we write
\[
z_n = Q_{0, n-1} t_n(z) + r_n(z), \quad
0 \le r_n(z) < Q_{0, n-1}, \quad
0 \le t_n(z) < q_{n-1} + 1, \quad
n \ge 1.
\]
We define
\begin{align*}
C_n
&= \{0 \le i < (q_{n-1} + 1) : \text{$\tau_n$ has a coincidence at index $i$}\}\\
D_n
&= \{z \in \ZZ_{(Q_{0,n})_{n \ge 0}} : t_n(z) \notin C_n\}, \quad
n \ge 1.
\end{align*}

\begin{claim}
If a point $z = (z_n)_{n \ge 0}$ in $\ZZ_{(Q_{0,n})_{n \ge 0}}$ is such that $t_n(z)$ belongs to $C_n$ for infinitely many values of $n$, then $|\pieq^{-1}(\{z\})| = 1$.
\end{claim}

Indeed, let $x, y \in X_{\btau_\WW'}$ be such that $k_n(x) = k_n(y) = z_n$ for $n \ge 0$.
If $t_n(z)$ belongs to $C_n$, then there exists a letter $\ell(z)$ in $\AA_n$ such that $\tau_n(a_{n+1})_{t_n(z)} = \ell(z)$ for every $a_{n+1}$ in $\AA_{n+1}$.
Consequently, we have
\[
S^{z_n} \tau_{[1, n+1)}([a_{n+1}])
\subseteq S^{r_n(z)} \tau_{[1,n)}([\ell(z)]), \quad
a_{n+1} \in \AA_{n+1}.
\]
This implies that there exists infinitely many values of $n$ for which $x$ and $y$ belong to $S^{r_n(z)} \tau_{[1,n)}([\ell(z)])$.
Since $(\TT_n')_{n \ge 0}$ is a nested sequence, we deduce that $\diam(S^{r_n(z)} \tau_{[1,n)}([\ell(z)])) \to 0$ as $n \to +\infty$, and therefore $x = y$, proving the claim.

\medskip

From the claim, it follows that if we denote by $\mathcal{Z}$ the set of points in $\ZZ_{(Q_{0,n})_{n \ge 0}}$ that are not invertible under $\pieq$, then
\[
\mathcal{Z}
\subseteq \bigcup_{n \ge 0} \bigcap_{m \ge n} D_m.
\]
Observe that, from \eqref{eq:tau}, we obtain $\nu(D_m) = 1 - \frac{|C_m|}{q_m + 1} = \frac{1}{q_m + 1}$ for $m \ge 0$.
Thus
\[
\nu \Bigg( \bigcap_{m \ge n} D_m \Bigg)
= \prod_{m \ge n} \frac{1}{q_m + 1}
\le \prod_{m \ge n} \frac{1}{2}
= 0,
\]
and hence $\nu(\mathcal{Z}) = 0$.
This proves that $\pieq$ is a measurable isomorphism between $(X_{\btau_\WW'}, S, \mu)$ and $(\ZZ_{(Q_{0,n})_{n \ge 0}}, T, \nu)$, and concludes the proof.
\end{proof}

\subsection{Computation of the topological rank}

In this section we compute explicitly the topological rank of a minimal Ferenczi subshift $(X_\WW, S)$.
We refer to \Cref{s:kakutani_rokhlin} and \Cref{ss:dimension_groups} for the definitions.

For an abelian group $G$ we denote by $\rank G$ the \emph{rational rank} of $G$, \emph{i.e.},
\[
\rank_\QQ G
= \dim_\QQ G \otimes \QQ.
\]

\subsubsection{Basics on tensor products}

We will need very classical facts on tensor products between abelian groups and $\QQ$.
We recall what is needed to follow our arguments and refer to \cite{Bou62} for more details.

Let $G$ be an abelian group.
Then, it has a $\ZZ$-module structure and we can define the tensor product $G \otimes \QQ$.
Moreover, this product has the structure of a vector space over $\QQ$.
Elements in $G \otimes \QQ$ are linear combinations of the form
\[
\sum_{k=0}^n g_k \otimes q_k, \quad
n \in \NN, \quad
g_k \in G, \quad
q_k \in \QQ, \quad
0 \le k \le n.
\]
An element of the form $g \otimes q$ with $g \in G$ and $q \in \QQ$ is said to be a \emph{pure tensor}.

\begin{proposition}\label{p:basics_tensor}
Let $G$ be an abelian group and $(G_n)_{n \ge 0}$ be a sequence of abelian groups.
We have the following.
\begin{enumerate}
    \item $(\varinjlim G_n) \otimes \QQ$ and $\varinjlim (G_n \otimes \QQ)$ are isomorphic as vector spaces over $\QQ$.
    \item If $g \otimes q = 0$ in $G \otimes \QQ$, then $q = 0$ or $g$ is a torsion element in $G$.
\end{enumerate}
\end{proposition}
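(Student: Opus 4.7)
The plan is to reduce both statements to the standard identification of $G \otimes \QQ$ with the localization $S^{-1} G$, where $S = \ZZ \setminus \{0\}$, under which a pure tensor $g \otimes (p/r)$ corresponds to the fraction $pg/r$ in $S^{-1}G$. Once this is admitted (which is essentially what \cite{Bou62} provides), both claims become short bookkeeping arguments.

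For (1), I would invoke the universal property of the direct limit. The morphisms $G_n \to \varinjlim G_n$ tensored with $\QQ$ form a compatible family $G_n \otimes \QQ \to (\varinjlim G_n) \otimes \QQ$, and hence induce a canonical $\QQ$-linear map
\[
\Phi : \varinjlim (G_n \otimes \QQ) \to (\varinjlim G_n) \otimes \QQ.
\]
Surjectivity is essentially tautological: any pure tensor on the right has the form $i_n(g) \otimes q$ for some $n$ and some $g \in G_n$, and is therefore the image under $\Phi$ of the class of $g \otimes q \in G_n \otimes \QQ$. For injectivity, I would first reduce a class in $\varinjlim(G_n \otimes \QQ)$ to a single pure tensor $g \otimes (1/r) \in G_n \otimes \QQ$ by clearing denominators, and then observe that its vanishing in $(\varinjlim G_n) \otimes \QQ$ forces, via the localization description, some nonzero integer $s$ to annihilate $i_n(g)$ in $\varinjlim G_n$; by the definition of direct limit this happens already at some finite stage $N \ge n$, i.e., $s \cdot i_{N,n}(g) = 0$ in $G_N$, which is exactly what is needed to conclude that $i_{N,n}(g) \otimes (1/r) = 0$ in $G_N \otimes \QQ$, and hence that the class is already zero in $\varinjlim(G_n \otimes \QQ)$.

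For (2), I would just read off the localization description. Assume $g \otimes q = 0$ with $q \neq 0$, and write $q = p/r$ with $p, r$ nonzero integers. Then $g \otimes (p/r)$ corresponds to $pg/r \in S^{-1}G$, and its vanishing means by definition of localization that there exists a nonzero integer $s$ with $sp \cdot g = 0$ in $G$. Since $sp \neq 0$, this shows that $g$ is a torsion element, as required.

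The main (minor) obstacle is (1): one must be careful not to confuse the various natural maps and to check that the map $\Phi$ is well defined, injective and surjective without circularity. Part (2) is essentially a one-line consequence of the localization description, provided one is willing to cite it from \cite{Bou62} rather than reprove it from scratch.
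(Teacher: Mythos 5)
Your proposal is correct. Note, however, that the paper does not actually prove this proposition: it is stated as a collection of classical facts with a pointer to \cite{Bou62}, so there is no in-text argument to compare against. Your localization-based proof is the standard one and is sound: the identification $G \otimes \QQ \cong S^{-1}G$ with $S = \ZZ \setminus \{0\}$ immediately gives (2), and for (1) your reduction of an arbitrary element of $G_n \otimes \QQ$ to a single pure tensor $g \otimes (1/r)$ by clearing denominators, followed by the observation that vanishing of $i_n(g) \otimes (1/r)$ forces $s \cdot i_{N,n}(g) = 0$ at some finite stage $N$, is exactly the right mechanism. One small point worth being explicit about: the paper defines $\varinjlim G_n$ concretely as $\Delta / \Delta^0$ (eventually compatible sequences modulo eventually zero ones), so ``$i_n(sg) = 0$ in the limit implies $i_{N,n}(sg) = 0$ for some $N \ge n$'' is precisely what that definition delivers; your argument uses this correctly, so there is no gap.
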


\subsubsection{Back to the computation of the topological rank}

The following lemma gives a lower bound for the topological rank of a minimal Cantor system.

\begin{lemma}\label{l:lower_bound}
Let $(X, T)$ be a minimal Cantor system.
Then
\begin{equation}\label{eq:rank_inequality}
\rank_\QQ H(X, T)
\le \rank (X, T).
\end{equation}
\end{lemma}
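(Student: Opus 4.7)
The plan is to translate the statement into a statement about dimensions of $\QQ$-vector spaces using the description of $H(X,T)$ as a direct limit along any nested sequence of CKR partitions, and then to bound the dimension of this direct limit by the dimensions of its building pieces.

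First, fix an arbitrary nested sequence $(\TT_n)_{n \ge 0}$ of CKR partitions of $(X,T)$, with associated alphabets $(\AA(\TT_n))_{n \ge 0}$ and incidence matrices $(M_n)_{n \ge 0}$. Proposition~\ref{p:dimension_group_CKR} gives a unital order isomorphism between $K^0(X,T)$ and the dimension group built from this data; in particular, as abelian groups,
\[
H(X,T) \;\cong\; \GG \;=\; \varinjlim \ZZ^{\AA(\TT_n)},
\]
where the connecting morphisms are induced by the matrices $M_n$ acting on row vectors.

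Next, tensor with $\QQ$. By \Cref{p:basics_tensor}(1), there is an isomorphism of $\QQ$-vector spaces
\[
H(X,T) \otimes \QQ \;\cong\; \varinjlim \bigl(\ZZ^{\AA(\TT_n)} \otimes \QQ\bigr) \;\cong\; \varinjlim \QQ^{\AA(\TT_n)},
\]
where the direct limit on the right is taken in the category of $\QQ$-vector spaces with the $\QQ$-linear extensions of the maps $M_n$ as connecting morphisms.

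Now bound the dimension of this limit. Denote by $i_n \colon \QQ^{\AA(\TT_n)} \to \varinjlim \QQ^{\AA(\TT_n)}$ the natural morphisms. The relation $i_n = i_{n+1} \circ M_n$ gives $i_n(\QQ^{\AA(\TT_n)}) \subseteq i_{n+1}(\QQ^{\AA(\TT_{n+1})})$, so the images form a nested chain of subspaces whose union is the whole limit. Hence
\[
\dim_\QQ \varinjlim \QQ^{\AA(\TT_n)} \;=\; \lim_{n \to +\infty} \dim_\QQ i_n\bigl(\QQ^{\AA(\TT_n)}\bigr) \;\le\; \liminf_{n \to +\infty} \dim_\QQ \QQ^{\AA(\TT_n)} \;=\; \liminf_{n \to +\infty} |\AA(\TT_n)|.
\]
Combining the two displays yields $\rank_\QQ H(X,T) \le \liminf_{n \to +\infty} |\AA(\TT_n)|$ for the chosen nested sequence.

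Finally, taking the infimum over all nested sequences of CKR partitions of $(X,T)$ and invoking the definition \eqref{eq:rank} of the topological rank gives \eqref{eq:rank_inequality}. The only non-cosmetic input is the commutation between tensoring with $\QQ$ and taking direct limits, which is exactly \Cref{p:basics_tensor}(1); everything else is a direct computation with increasing unions of finite-dimensional subspaces.
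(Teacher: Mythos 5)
Your proposal is correct and follows essentially the same route as the paper's proof: realize $H(X,T)$ as the direct limit $\varinjlim \ZZ^{\AA(\TT_n)}$ via \Cref{p:dimension_group_CKR}, commute the tensor with $\QQ$ past the direct limit via \Cref{p:basics_tensor}, and bound the dimension by observing that the limit is the increasing union of the images $i_n(\QQ^{\AA(\TT_n)})$, each of dimension at most $|\AA(\TT_n)|$. The only (cosmetic) difference is that you work directly with $\liminf_n |\AA(\TT_n)|$ rather than first contracting the sequence so that $|\AA(\TT_n)|$ is constant, which is a harmless streamlining.
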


\begin{proof}
We can assume $\rank (X, T) < +\infty$.
Let $(\TT_n)_{n \ge 0}$ be any nested sequence of CKR partitions of $(X, T)$ such that $\liminf_{n \to +\infty} |\AA(\TT_n)| < +\infty$.
Denote by $(M_n)_{n \ge 0}$ to the sequence of incidence matrices of $(\TT_n)_{n \ge 0}$.
We can assume that $|\AA(\TT_n)| = p$, $n \ge 0$ for some $p \in \NN$.

By \Cref{p:dimension_group_CKR}, the dimension group $H(X, T)$ can be seen as the direct limit $\varinjlim \ZZ^{\AA(\TT_n)}$ with linear maps $M_n : \ZZ^{\AA(\TT_n)} \to \ZZ^{\AA(\TT_{n+1})}$, $n \ge 0$.
Define the linear maps $j_{n+1, n} : \ZZ^{\AA(\TT_n)} \otimes \QQ \to \ZZ^{\AA(\TT_{n+1})} \otimes \QQ$ on pure tensors by
\[
j_{n+1, n}(v \otimes q)
= v M_n \otimes q, \quad
v \in \ZZ^{\AA(\TT_n)}, \quad
q \in \QQ, \quad
n \ge 0
\]
and extend them by linearity to $\ZZ^{\AA(\TT_n)} \otimes \QQ$.
We consider $\varinjlim (\ZZ^{\AA(\TT_n)} \otimes \QQ)$ with linear maps $(j_{n+1,n})_{n \ge 0}$.

\Cref{p:basics_tensor} implies that $H(X, T) \otimes \QQ$ and $\varinjlim (\ZZ^{\AA(\TT_n)} \otimes \QQ)$ are isomorphic vector spaces over $\QQ$.
Each morphism $j_n : \ZZ^{\AA(\TT_n)} \otimes \QQ \to \varinjlim (\ZZ^{\AA(\TT_n)} \otimes \QQ)$ is linear and we have
\[
\dim_\QQ \ima j_n
\le \dim_\QQ \ima j_n + \dim_\QQ \ker j_n
= p.
\]
Since $\ima j_m \subseteq \ima j_n$ for $m < n$, there exists $N \in \NN$ such that $\ima j_m = \ima j_n$ for all $m, n \ge N$.
This, together with the fact that $\varinjlim (\ZZ^{\AA(\TT_n)} \otimes \QQ) = \bigcup_{n \ge 0} \ima j_n$, implies that $ \rank_\QQ H(X, T) \le p$.

Since the choice of the sequence $(\TT_n)_{n \ge 0}$ is arbitrary, we deduce \eqref{eq:rank_inequality}.
\end{proof}

The proof of the next lemma is essentially given in \cite[Theorem 4.1]{BCBD+21}.

\begin{proposition}\label{p:rational_rank}
Let $\btau = (\tau_n : \AA_{n+1}^\ast \to \AA_n^\ast)_{n \ge 0}$ be a primitive, proper and invertible directive sequence.
Let $d = |\AA_0|$.
Then
\[
\rank_\QQ H(X_{\btau}, S)
= \rank (X_{\btau}, S)
= d.
\]
\end{proposition}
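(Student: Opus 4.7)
The plan is to sandwich both quantities between $d$ and $d$ by combining Lemma \ref{l:lower_bound} with a direct computation of $H(X_{\btau}, S)$. Invertibility of $\btau$ forces $|\AA_n| = d$ for every $n \ge 0$, and, together with primitivity, Proposition \ref{p:aperiodic} yields that $(X_{\btau}, S)$ is minimal and aperiodic.

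To obtain the upper bound $\rank(X_{\btau}, S) \le d$, I would first establish that $\btau$ is recognizable. Since $\btau$ is primitive and $(X_{\btau}, S)$ is aperiodic, recognizability at every level follows from the general theory in \cite{BSTY19}, possibly after a harmless contraction that preserves primitivity, properness and invertibility. Proposition \ref{p:nested} then furnishes a nested sequence $(\TT_n)_{n \ge 0}$ of CKR partitions of $(X_{\btau}, S)$ with $|\AA(\TT_n)| = |\AA_n| = d$, whence the definition \eqref{eq:rank} gives $\rank(X_{\btau}, S) \le d$.

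For the reverse inequality I would use the concrete description of the dimension group given in Section \ref{ss:dg_sadic}. Under the invertibility hypothesis, $H(X_{\btau}, S)$ is unital order isomorphic to the group
\[
\GG = \{x \in \RR^{\AA_1} : x M_{\tau_1} M_{\tau_2} \cdots M_{\tau_n} \in \ZZ^{\AA_{n+1}}\ \text{for large enough}\ n\}.
\]
Two elementary observations close the argument. Since each matrix $M_{\tau_k}$ has integer entries, $\ZZ^{\AA_1}$ is contained in $\GG$, so $\rank_\QQ \GG \ge d$. Conversely, if $x \in \GG$ and $x M_{\tau_1} \cdots M_{\tau_N} \in \ZZ^{\AA_{N+1}}$ for some $N$, then invertibility yields $x = (x M_{\tau_1} \cdots M_{\tau_N}) (M_{\tau_1} \cdots M_{\tau_N})^{-1} \in \QQ^{\AA_1}$, because the inverse of an invertible integer matrix lies in $\QQ^{\AA_1 \times \AA_1}$; hence $\GG \subseteq \QQ^{\AA_1}$ and $\rank_\QQ \GG \le d$.

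Combining these bounds with Lemma \ref{l:lower_bound} gives
\[
d = \rank_\QQ H(X_{\btau}, S) \le \rank(X_{\btau}, S) \le d,
\]
so all three quantities coincide with $d$. The algebraic part of the argument is routine bookkeeping with invertible integer matrices; the main subtlety I expect is the recognizability step feeding into the CKR description, which is the only place where aperiodicity (and thus the invertibility-plus-primitivity input from Proposition \ref{p:aperiodic}) is really needed.
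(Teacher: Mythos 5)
Your proposal is correct and follows the same overall architecture as the paper: aperiodicity from Proposition \ref{p:aperiodic}, recognizability from \cite{BSTY19}, the upper bound $\rank(X_{\btau}, S) \le d$ via the nested CKR partitions of Proposition \ref{p:nested}, and the sandwich through Lemma \ref{l:lower_bound}. The only divergence is in how $\rank_\QQ H(X_{\btau}, S) = d$ is established. The paper works directly with $H(X_{\btau}, S) = C(X_{\btau}, \ZZ)/\partial C(X_{\btau}, \ZZ)$ and shows that $\{[\chi_{[a]}] \otimes 1 : a \in \AA_0\}$ is a basis of $H(X_{\btau}, S) \otimes \QQ$, importing the spanning and independence arguments from \cite[Theorem 4.1]{BCBD+21} together with torsion-freeness of $H(X_{\btau}, S)$ and Proposition \ref{p:basics_tensor}. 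You instead invoke the explicit direct-limit description of Section \ref{ss:dg_sadic} and run the sandwich $\ZZ^{\AA_1} \subseteq \GG \subseteq \QQ^{\AA_1}$, the second inclusion coming from the rationality of $(M_{\tau_1} \cdots M_{\tau_N})^{-1}$; this is cleaner and more self-contained, at the cost of leaning on the (unproved but routine) identification of $K^0(X_{\btau}, S)$ with $(\GG, \GG^+, u)$ stated in that section, which itself requires the recognizability and aperiodicity you have already secured. Both arguments ultimately rest on the same fact — inverses of the composition matrices have rational entries — so the substance is the same.
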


\begin{proof}
By \Cref{p:aperiodic} we have that $(X_{\btau}, S)$ is an aperiodic subshift.
Let $(\TT_n)_{n \ge 0}$ be the sequence given by \eqref{eq:rep_sadic}.
Since $\btau$ is recognizable \cite[Theorem 3.1]{BSTY19}, by \Cref{p:nested} we have that $(\TT_n)_{n \ge 0}$ is a nested sequence of CKR partitions.
Hence, by \eqref{eq:rank} we have $\rank (X_{\btau}, S) \le d$.

Now we show that $H(X_{\btau}, S) \otimes \QQ$ is finite-dimensional.
Indeed, we prove that
\[
B
= \{[\chi_{[a]}] \otimes 1 : a \in \AA_0\}
\]
is a basis of $H(X_{\btau}, S) \otimes \QQ$, where $[\chi_{[a]}]$ denotes the class of the characteristic function of the cylinder $[a]$ in $H(X_{\btau}, S)$.
This will finish the proof since \eqref{eq:rank_inequality} implies
\[
d
= \rank_\QQ H(X_{\btau}, S)
\le \rank(X_{\btau}, S)
\le d.
\]

Following the same steps as in the proof of \cite[Theorem 4.1]{BCBD+21} and since the matrix $M_{\tau_{[0,n)}}^{-1}$ has rational entries, we deduce that $B$ spans $H(X_{\btau}, S) \otimes \QQ$.
Suppose that $\alpha = (\alpha_a)_{a \in \AA_0} \in \ZZ^{\AA_0}$ is such that
\[
\sum_{a \in \AA_0} \alpha_a [\chi_{[a]}] \otimes 1
= 0.
\]
The fact that $H(X_{\btau}, S)$ is a torsion-free abelian group \cite[Proposition 2.1.13]{DP22} and \Cref{p:basics_tensor} imply that $\sum_{a \in \AA_0} \alpha_a [\chi_{[a]}] = 0$ in $H(X_{\btau}, S)$.
Then, as in the proof of \cite[Theorem 4.1]{BCBD+21}, we obtain $\alpha = 0$ and $B$ is a basis.
\end{proof}

\Cref{p:rational_rank} and \eqref{eq:inverse_product} directly imply the following.

\begin{corollary}\label{c:rank}
Let $(X_\WW, S)$ be a minimal Ferenczi subshift.
Then
\[
\rank (X_\WW, S)
= d_\WW.
\]
\end{corollary}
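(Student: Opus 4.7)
The plan is to establish the matching inequalities $\rank(X_\WW, S) \le d_\WW$ and $\rank(X_\WW, S) \ge d_\WW$.

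For the upper bound I would use the explicit nested sequence of CKR partitions $(\TT_n)_{n \ge 0}$ associated with $\btau_\WW$ via \eqref{eq:rep_sadic}; by \Cref{p:nested} this is a nested sequence of CKR partitions of $(X_\WW, S)$, and it satisfies $|\AA(\TT_n)| = |\AA_n|$. Since the alphabets stabilize with $\AA_n = \AA_\WW$ for all $n \ge n_0$, we have $|\AA(\TT_n)| = d_\WW$ eventually, and plugging this sequence into the defining infimum \eqref{eq:rank} yields $\rank(X_\WW, S) \le d_\WW$.

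For the lower bound I would invoke \Cref{l:lower_bound}, which reduces the task to proving $\rank_\QQ H(X_\WW, S) \ge d_\WW$. Combining \Cref{p:dimension_group_CKR} applied to the same sequence $(\TT_n)_{n \ge 0}$ with \Cref{p:basics_tensor}(1), the $\QQ$-vector space $H(X_\WW, S) \otimes \QQ$ is isomorphic to the direct limit $\varinjlim (\ZZ^{\AA_n} \otimes \QQ)$ with connecting maps given by the composition matrices $M_{\tau_n}$ (which by \Cref{p:nested} coincide with the incidence matrices of the partitions). Because a direct limit is determined by any tail of the direct system, this vector space is also the direct limit of the subsystem starting at level $n_0$, where every alphabet coincides with $\AA_\WW$ and, by \eqref{eq:inverse_product}, every finite product of the connecting matrices is invertible over $\QQ$. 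Hence each transition map in the tail is a $\QQ$-linear isomorphism between copies of $\QQ^{d_\WW}$, so the natural map $\ZZ^{\AA_\WW} \otimes \QQ \to H(X_\WW, S) \otimes \QQ$ is a $\QQ$-linear isomorphism. This gives $\dim_\QQ H(X_\WW, S) \otimes \QQ = d_\WW$ and concludes the lower bound.

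The main (mild) obstacle is that \Cref{p:rational_rank} cannot be applied verbatim to $\btau_\WW$: the composition matrix $M_{\tau_0}$ is not square in general (since $|\AA_0| = 2$ while $|\AA_1| = \ell$), so $\btau_\WW$ fails the invertibility hypothesis. The fix is precisely the one outlined above, namely to run the argument of \Cref{p:rational_rank} on the stable tail of the directive sequence, where invertibility over $\QQ$ is guaranteed by \eqref{eq:inverse_product}; this is the sense in which the corollary is a \emph{direct} consequence of \Cref{p:rational_rank} together with \eqref{eq:inverse_product}.
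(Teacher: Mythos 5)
Your proof is correct and follows essentially the route the paper intends when it states that \Cref{p:rational_rank} and \eqref{eq:inverse_product} ``directly imply'' the corollary: the upper bound from the explicit CKR partitions of $\btau_\WW$, and the lower bound from \Cref{l:lower_bound} together with the computation of $\rank_\QQ H(X_\WW,S)$ on the stabilized, invertible tail of the directive sequence. Your observation that $M_{\tau_0}$ is not square, so \Cref{p:rational_rank} cannot be applied verbatim and one must pass to the tail of the direct system (where \eqref{eq:inverse_product} supplies invertibility), is exactly the detail the paper leaves implicit.
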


\subsection{Computation of the dimension group}

We now compute the dimension group of a minimal Ferenczi subshift $(X_\WW, S)$.
Let us explain how we will proceed.

Let $\btau_\WW$ be the directive sequence associated with $(X_\WW, S)$ and $n_0 \in \NN$ be such that $\AA_n = \AA_\WW$ for $n \ge n_0$.
Define the directive sequence
\[
\widehat{\btau}_\WW
= (\tau_{n + n_0} : \AA_\WW^\ast \to \AA_\WW^\ast)_{n \ge 0}.
\]
By \Cref{l:product}, $\widehat{\btau}_\WW$ is invertible.
By \Cref{l:induced}, the $\SS$-adic subshift $(X_{\widehat{\btau}_\WW}, S)$ is topologically conjugate to an induced system of $(X_{\btau_\WW}, S)$ on some clopen set and, from \Cref{ss:dg_sadic}, the dimension group of $\widehat{\btau}_\WW$ is unital order isomorphic to $(\HH_\WW, \HH_\WW^+, \bm{1})$, where
\begin{align*}
\HH_\WW
&= \{y \in \RR^{\AA_\WW} : y M_{\tau_{n_0}} M_{\tau_{n_0 + 1}} \ldots M_{\tau_{n_0 + n - 1}} \in \ZZ^{\AA_\WW} \
\text{for all $n$ large enough}\};\\
\HH_\WW^+
&= \{y \in \RR^{\AA_\WW} : y M_{\tau_{n_0}} M_{\tau_{n_0 + 1}} \ldots M_{\tau_{n_0 + n - 1}} \in \ZZ_+^{\AA_\WW} \
\text{for all $n$ large enough}\};
\end{align*}
and $\bm{1}(a) = 1$ for $a \in \AA_\WW$.
Moreover, the dimension group of $(X_\WW, S)$ is unital order isomorphic to $(\HH_\WW, \HH_\WW^+, v_\WW)$, where $v_\WW = (|\tau_{[0, n_0)}(a)|)_{a \in \AA_\WW}$.

Recall the definition of the sequence $(q_n)_{n \ge 0}$ given in \eqref{eq:generating_words} and of $Q_{m,n}$ in \eqref{eq:Qmn}.

By \Cref{p:uniq_ergodic} and \Cref{l:induced}, the system $(X_{\widehat{\btau}_\WW}, S)$ is uniquely ergodic.
Denote by $\widehat{\mu}$ its unique invariant probability measure and define the column probability vector $\widehat{\bmu} \in \RR^{\AA_\WW}$ by
\[
\widehat{\bmu}(a)
= \widehat{\mu}([a]), \quad a \in \AA_\WW.
\]

As in the proof of \Cref{p:exact_rank}, by using \cite[Proposition 5.1]{BKMS13} we have
\begin{equation}\label{eq:mu_hat}
\widehat{\bmu}(a)
= \sum_{k = n_0}^\infty \frac{f_k(a)}{Q_{n_0 - 1, k}}, \quad a \in \AA_\WW.
\end{equation}

In order to describe the dimension group of $(X_\WW, S)$, we need to define, for a sequence of positive integers $(a_n)_{n \geq N}$, the following additive group
\[
\ZZ[(a_n)_{n \ge N}]
= \left\{ \frac{m}{a_N a_{N+1} \cdots a_n} : m \in \ZZ,\ n \ge N \right\}.
\]
If $a_n = a$ for all $n \ge N$, we write $\ZZ[1/a] = \ZZ[(a_n)_{n \ge N}]$.

Let $a' \in \AA_\WW$ be such that $a' = \min_{a \in \AA_\WW} a$.
Define $\BB_\WW = \AA_\WW \setminus \{a'\}$.

We see elements in $\RR^{\AA_\WW}$ as vectors in $\RR^{\BB_\WW} \times \RR$.
Define the column vector $\bm{z} \in \RR^{\AA_\WW}$ by
\[
\bm{z}(b)
= \widehat{\bmu}(b), \quad
b \in \BB_\WW \quad
\text{and} \quad
\bm{z}(a')
= 1.
\]

\begin{proposition}\label{p:computation_dg}
Let $(X_\WW, S)$ be a minimal Ferenczi subshift and $(q_n)_{n \ge 0}$ be the sequence given in \eqref{eq:generating_words}.
The dimension group $K^0(X_\WW, S)$ is unital order isomorphic to $(\GG_\WW, \GG_\WW^+, u_\WW)$, where
\begin{align*}
\GG_\WW
&= \ZZ^{\BB_\WW} \times \ZZ[(q_n + 1)_{n \ge n_0 - 1}];\\
\GG_\WW^+
&= \{x \in \GG_\WW : x \cdot \bm{z} > 0\} \cup \{0\};
\end{align*}
and $u_\WW$ is given by $u_\WW(b) = b - a'$, $b \in \BB_\WW$ and $u_\WW(a') = a' + |w_{n_0 - 1}|$.
\end{proposition}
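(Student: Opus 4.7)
The starting point is the identification of $K^0(X_\WW, S)$ with $(\HH_\WW, \HH_\WW^+, v_\WW)$ stated just before the proposition. I define the candidate map
\[
\Phi : \HH_\WW \to \GG_\WW, \qquad \Phi(y) = \bigl((y(b) - y(a'))_{b \in \BB_\WW},\ y(a')\bigr),
\]
and show it is a unital order isomorphism. To see that $\Phi$ takes values in $\GG_\WW$, rewrite $y M_{\tau_{n_0}} \cdots M_{\tau_{n_0+n-1}} = y + (y \cdot f_{n_0, n_0+n})\,\uu$ using \eqref{eq:long_product}. For $y \in \HH_\WW$ this lies in $\ZZ^{\AA_\WW}$ for every sufficiently large $n$, so all pairwise differences $y(b) - y(a')$ are in $\ZZ$. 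Substituting $y(b) = y(a') + m_b$ with $m_b \in \ZZ$ and using $|f_{n_0, n_0+n}| + 1 = Q_{n_0-1, n_0+n-1}$ from \eqref{eq:fmn_Qmn} reduces the integrality condition to $y(a')\, Q_{n_0-1, n_0+n-1} \in \ZZ$ for large $n$, which is exactly $y(a') \in \ZZ[(q_n+1)_{n \ge n_0-1}]$. The reverse construction---starting from $((m_b)_b, t) \in \GG_\WW$, setting $y(b) = t + m_b$ and $y(a') = t$, and checking $y \in \HH_\WW$ by the same identity---gives a two-sided inverse, so $\Phi$ is a group isomorphism. That $\Phi(v_\WW) = u_\WW$ is then immediate from $v_\WW(a) = |\tau_{[0, n_0)}(a)| = a + |w_{n_0-1}|$, which is \eqref{eq:height_vectors}.

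For the positive cone, set $z_n = y + c_n \uu = y M_{\tau_{n_0}} \cdots M_{\tau_{n_0+n-1}}$ with $c_n = y \cdot f_{n_0, n_0+n}$. A direct computation using \eqref{eq:fmn} together with \eqref{eq:mu_hat} yields $f_{n_0, n_0+n}(a) / Q_{n_0-1, n_0+n-1} \to \widehat{\bmu}(a)$ and hence
\[
\frac{z_n(a)}{Q_{n_0-1, n_0+n-1}} \longrightarrow y \cdot \widehat{\bmu} \qquad \text{for every } a \in \AA_\WW.
\]
A short manipulation using $\sum_{a \in \AA_\WW} \widehat{\bmu}(a) = 1$ shows $y \cdot \widehat{\bmu} = \Phi(y) \cdot \bm{z}$. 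Consequently, whenever $\Phi(y) \cdot \bm{z} > 0$ the integer vector $z_n$ is eventually strictly positive entrywise and $y \in \HH_\WW^+$; and whenever $\Phi(y) \cdot \bm{z} < 0$ one has $z_n \notin \ZZ_+^{\AA_\WW}$ for large $n$, so $y \notin \HH_\WW^+$.

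The main obstacle is the boundary case $\Phi(y) \cdot \bm{z} = 0$ with $y \ne 0$, where the asymptotic above only yields $z_n(a) = o(Q_{n_0-1, n_0+n-1})$ and no definite sign information. To rule such $y$ out of $\HH_\WW^+$, I invoke invariance of the measure along the tower: letting $\widehat{\bmu}_n$ denote the measure vector at level $n$ of the CKR partition associated with $\widehat{\btau}_\WW$, \eqref{eq:inv_measure} gives $M_{\tau_{n_0}} \cdots M_{\tau_{n_0+n-1}} \widehat{\bmu}_n = \widehat{\bmu}$, whence $z_n \cdot \widehat{\bmu}_n = y \cdot \widehat{\bmu} = 0$. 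If $y$ belonged to $\HH_\WW^+$ then $z_n \in \ZZ_+^{\AA_\WW}$ for large $n$, and strict positivity of every $\widehat{\bmu}_n(a)$ (a consequence of minimality and unique ergodicity of $(X_{\widehat{\btau}_\WW}, S)$) would force $z_n = 0$, and then $y = 0$ by invertibility of $M_{\tau_{n_0}} \cdots M_{\tau_{n_0+n-1}}$ from \Cref{l:product}. This excludes nonzero positive infinitesimals and completes the identification $\Phi(\HH_\WW^+) = \GG_\WW^+$.
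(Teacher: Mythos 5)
Your proposal is correct and follows essentially the same route as the paper: the same isomorphism $y \mapsto \bigl((y(b)-y(a'))_{b\in\BB_\WW},\, y(a')\bigr)$ justified via \eqref{eq:long_product} and \eqref{eq:fmn_Qmn}, the same identification of the cone through the pairing with $\widehat{\bmu}$ (equivalently $\bm{z}$), and the same check of the unit via \eqref{eq:height_vectors}. The only variation is in showing that $y\cdot\widehat{\bmu}>0$ forces $y\in\HH_\WW^+$: you use the explicit asymptotics $z_n(a)/Q_{n_0-1,n_0+n-1}\to y\cdot\widehat{\bmu}$ coming from \eqref{eq:fmn} and \eqref{eq:mu_hat}, whereas the paper argues by contradiction with limit points of normalized columns and unique ergodicity; both are sound, and your handling of the boundary case via $z_n\cdot\widehat{\bmu}_n=y\cdot\widehat{\bmu}$ and invertibility matches the paper's mechanism.
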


\begin{proof}
We begin by proving the following.

\begin{claim}
We have
\[
\HH_\WW^+
= \{y\in \HH_\WW : y \cdot \widehat{\bmu} > 0\} \cup \{0\}.
\]
\end{claim}

Indeed, since the directive sequence $\widehat{\btau}$ is primitive, proper and recognizable, the measure $\widehat{\mu}$ is uniquely determined by the associated sequence of measure vectors $(\widehat{\mu}_n)_{n \ge 0}$ as defined in \Cref{ss:invariant_measures}.
For $a \in \AA_\WW$, denote by $e_a \in \ZZ^\AA$ the vector such that $e_a(b) = 1$ if $a = b$ and $0$ otherwise.

Let $\widehat{P}_n = M_{\tau_{n_0}} M_{\tau_{n_0 + 1}} \ldots M_{\tau_{n_0 + n - 1}}$ for $n > 0$.
By \eqref{eq:inv_measure}, we have
\[
\widehat{\bmu}
= \widehat{P}_n \mu_n, \quad
n > 0.
\]
If $y$ belongs to $\HH_\WW^+ \setminus \{0\}$ and $n > 0$ is such that $y \widehat{P}_n$ is in $\ZZ_+^{\AA_\WW}$, then 
\[
y \cdot \widehat{\bmu}
= y \cdot \widehat{P}_n \mu_n
= (y \widehat{P}_n)\cdot \mu_n > 0.
\]

Now, let $y \in \HH_\WW$ with $y \cdot \widehat{\bmu} > 0$.
By contradiction, if $y$ is not in $\HH_\WW^+$, there exists $N \in \NN$ and a sequence $(a_n)_{n \ge N}$ such that $a_n$ belongs to $\AA_\WW$ and $(y \widehat{P}_n) \cdot e_{a_n} \le -1$ for all $n \ge N$.
Hence, there exists $a \in \AA_\WW$ and a sequence $(n_k)_{k \ge 0}$ such that $n_k \ge N$ and $(y \widehat{P}_{n_k})\cdot e_a \le -1$ for $k \ge 0$.
Let $\widehat{\bm{\nu}}$ be a limit point of the sequence of probability vectors $(\widehat{P}_{n_k} e_a / |\widehat{P}_{n_k} e_a|)_{k \ge 0}$.

By unique ergodicity of $(X_{\widehat{\btau}}, S)$, we deduce $\widehat{\bm{\nu}} = \widehat{\bmu}$.
Finally, up to passing to a subsequence,
\[
0 < y \cdot \widehat{\bmu}
= \lim_{k \to +\infty}
\frac{(y \widehat{P}_{n_k}) \cdot e_a}{|\widehat{P}_{n_k} e_a|}
\le 0,
\]
a contradiction.
This proves the claim.

Let $y \in \HH_\WW$. 
There exists $n \ge n_0$ with $y M_{\tau_{n_0}} M_{\tau_{n_0 + 1}} \ldots M_{\tau_{n_0 + n - 1}} \in \ZZ^{\AA_\WW}$.
Recall that $\uu$ is the row vector of ones in $\RR^{\AA_\WW}$.
By \eqref{eq:long_product}, we see that
\[
y M_{\tau_{n_0}} M_{\tau_{n_0 + 1}} \ldots M_{\tau_{n_0 + n - 1}}
= y + (y \cdot f_{n_0, n_0 + n}) \uu
\in \ZZ^{\AA_\WW},
\]
and hence $y(b) - y(a')$ belongs to $\ZZ$, $b \in \BB_\WW$.
Observe that 
\[
\uu M_{\tau_{n_0}} M_{\tau_{n_0 + 1}} \ldots M_{\tau_{n_0 + n - 1}}
= (|f_{n_0, n_0 + n}| + 1) \uu.
\]
Moreover, from \eqref{eq:Qmn} and \eqref{eq:fmn_Qmn}, we have \[
|f_{n_0, n_0 + n}| + 1
= (q_{n_0 - 1} + 1) (q_{n_0} + 1) \ldots (q_{n_0 + n - 2} + 1),
\]
so $y(a')$ belongs to $\ZZ[(q_n + 1)_{n \ge n_0 - 1}]$.

These two observations allow us to define the following group isomorphism
\begin{align*}
\psi : \HH_\WW
&\to \ZZ^{\BB_\WW} \times \ZZ[(q_n + 1)_{n \ge n_0 - 1}]\\
y &\mapsto (y', y(a'))
\end{align*}
where $y'(b) = y(b) - y(a')$ for $b \in \BB_\WW$.
Moreover, for $y \in \HH_\WW$ we have $y \cdot \widehat{\bmu} = \psi(y) \cdot \bm{z}$ and hence
\[
\psi(\HH_\WW^+)
= \{x \in \GG_\WW : x \cdot \bm{z} > 0\} \cup \{0\}.
\]
From \Cref{l:computation_heights}, we obtain $\psi(v_\WW) = u_\WW$.
This completes the proof.
\end{proof}

\subsection{Zoology of dimension groups of Ferenczi type}

We now characterize the dimension groups that can be obtained from minimal Ferenczi subshifts.
For this, we need to recall the following well-known fact about numeration systems.

\subsubsection{Facts about numeration systems}

Let $(p_k)_{k \ge 0}$ be a sequence of positive integers with $p_k \ge 2$, $k \ge 0$.
Then, for every real number $x$ with $0 \le x \le 1$, there exists a sequence $(f_k)_{k \ge 1}$ such that $0 \le f_k \le p_{k-1}$ for $k \ge 1$ and
\[
x
= \sum_{k = 1}^\infty \frac{f_k}{p_0 p_1 \ldots p_{k-1}}.
\]

We say that $(f_k)_{k \ge 1}$ is the \emph{expansion} of $x$ in the \emph{base} $(p_k)_{k \ge 0}$.

\subsubsection{Ferenczi type dimension groups}

Let $\BB$ be a nonempty alphabet.
We define
\[
U^\BB
= \{u \in \ZZ_{>0}^\BB : u(b) \not= u(b')\ \text{for}\ b, b' \in \BB\}.
\]
Observe that the unit $u_\WW$ in \Cref{p:computation_dg} belongs to $U^{\BB_\WW} \times \ZZ_{>0}$ since all elements in $\AA_\WW$ are distinct.
Define
\[
\Delta^\BB
= \{z \in \RR_{>0}^\BB : \textstyle\sum_{b \in \BB} z(b) < 1
\}
\]
and let $(r_n)_{n \ge 0}$ be a sequence of integers with $r_n \ge 2$.

We say that a dimension group $(\GG, \GG^+, u)$ is of \emph{Ferenczi type} if there exist a nonempty alphabet $\BB$, a sequence $(r_n)_{n \ge 0}$ as before and $\bm{z} \in \Delta^\BB \times \{1\}$ such that
\begin{align*}
\GG
&= \ZZ^\BB \times \ZZ[(r_n + 1)_{n \ge 0}]; \\
\GG^+
&= \{x \in \GG : x \cdot \bm{z} > 0\} \cup \{0\}; \quad
\text{and} \quad \\
u &\in U^\BB \times \ZZ_{>0}.
\end{align*}

\Cref{p:computation_dg} shows that the dimension group of a minimal Ferenczi subshift is of Ferenczi type.
Conversely, let $(\GG, \GG^+, u)$ be a dimension group of Ferenczi type given by $\BB$, $(r_n)_{n \ge 0}$ and $\bm{z}$.
Write $u = (v, w)$, where $v \in U^\BB$ and $w \in \ZZ_{>0}$.

Let $a' = w - 1$ and $s(b) = a' + v(b)$ for $b \in \BB$.
Observe that $s(b) > a'$ and $s(b) \not= s(b')$ for $b, b' \in \BB$.
Define any sequence of generating words $\WW$ such that:
\begin{enumerate}
    \item $n_0 = 1$ and $\AA_\WW = \{s(b) : b \in \BB\} \cup \{a'\}$;
    \item $q_n = r_n$ for $n \ge 0$; and
    \item for $b \in \BB_\WW$, let $(f_k(s(b)))_{k \ge 1}$ be the expansion of $\bm{z}(b)$ in the base $(q_k + 1)_{k \ge 0}$, \emph{i.e.},
    \[
    \bm{z}(b)
    = \sum_{k=1}^\infty \frac{f_k(s(b))}{(q_0+1) (q_1+1) \ldots (q_{k-1}+1)}, \quad
    b \in \BB.
    \]
\end{enumerate}

From Equation \eqref{eq:mu_hat}, we have thus proved the following.

\begin{corollary}\label{c:strong_ferenczi}
A dimension group $\mathcal{K} = (\GG, \GG^+, u)$ is of Ferenczi type if and only if there exists a minimal Ferenczi subshift $(X_\WW, S)$ such that $\mathcal{K}$ is unital order isomorphic to $K^0(X_\WW, S)$.
\end{corollary}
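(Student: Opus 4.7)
The plan is to establish each implication of the biconditional using \Cref{p:computation_dg} as the key tool.

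\emph{Forward direction.} Assume $\mathcal{K}$ is unital order isomorphic to $K^0(X_\WW, S)$ for some minimal Ferenczi subshift $(X_\WW, S)$. By \Cref{p:computation_dg}, $\mathcal{K}$ is unital order isomorphic to $(\GG_\WW, \GG_\WW^+, u_\WW)$, so I would verify that this triple fits the definition of Ferenczi type: take $\BB := \BB_\WW$, the sequence $(r_n)_{n \ge 0} := (q_{n + n_0 - 1})_{n \ge 0}$ (which satisfies $r_n \ge 2$ since $\WW$ is standard), and $\bm{z}$ as defined just before \Cref{p:computation_dg}. The only nontrivial point to check is that $\bm{z}$ lies in $\Delta^{\BB_\WW} \times \{1\}$: by primitivity of $\widehat{\btau}_\WW$ one has $\widehat{\bmu}(a') > 0$, so $\sum_{b \in \BB_\WW} \bm{z}(b) = 1 - \widehat{\bmu}(a') < 1$. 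The unit $u_\WW$ lies in $U^{\BB_\WW} \times \ZZ_{>0}$ because the values $b - a'$ for $b \in \BB_\WW$ are pairwise distinct positive integers, using that $a' = \min \AA_\WW$.

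\emph{Backward direction.} Given $\mathcal{K} = (\GG, \GG^+, u)$ of Ferenczi type with data $(\BB, (r_n)_{n \ge 0}, \bm{z})$, I follow the explicit construction outlined in the paper. Writing $u = (v, w)$, set $a' := w - 1$, $s(b) := a' + v(b)$ and $\AA_\WW := \{a'\} \cup s(\BB)$. The goal is to choose cutting parameters $(q_n)_{n \ge 0}$ and spacer parameters $(a_{n,i})$ so that, with $n_0 = 1$, the frequency vectors $f_k$ defined by \eqref{eq:f_n} satisfy
\[
\bm{z}(b) = \sum_{k \ge 1} \frac{f_k(s(b))}{Q_{0, k}}, \quad b \in \BB,
\]
together with $\sum_b f_k(s(b)) \le q_{k-1}$, so that $f_k(a') := q_{k-1} - \sum_b f_k(s(b)) \ge 0$. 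Once this is achieved, $\WW$ is standard with bounded spacers, hence $(X_\WW, S)$ is a minimal Ferenczi subshift; formula \eqref{eq:mu_hat} produces $\widehat{\bmu}(s(b)) = \bm{z}(b)$; and \Cref{p:computation_dg} identifies $K^0(X_\WW, S)$ with $\mathcal{K}$ after the natural bijection $s : \BB \to \BB_\WW$, provided one observes that $\ZZ[(q_n + 1)_{n \ge 0}] = \ZZ[(r_n + 1)_{n \ge 0}]$ whenever $(q_n)$ arises as a contraction of $(r_n)$.

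The main obstacle is the construction of the frequency sequences $f_k$. Taking, for each $b \in \BB$, the independent base-$(r_k + 1)$ expansion of $\bm{z}(b)$ can fail the constraint $\sum_b f_k(s(b)) \le q_{k-1}$ when the digits of the various $\bm{z}(b)$ happen to align at the same position. To bypass this, I would take $(q_n)$ as a sufficiently fast-growing contraction of $(r_n)$ and define the $f_k(s(b))$ inductively rather than independently. Since $\sum_b \bm{z}(b) < 1$ strictly, there is uniform positive slack to be absorbed into $f_k(a')$; at each step $k$, choosing the contraction level large enough allows a coordinated distribution of digits across the $|\BB|$ expansions that keeps $\sum_b f_k(s(b)) \le q_{k-1}$ while producing a correct partial sum and keeping the spacer values in the fixed finite alphabet $\AA_\WW$. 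Since passing to a contraction does not alter the group $\ZZ[(r_n+1)_{n \ge 0}]$, the dimension group identification produced by \Cref{p:computation_dg} still matches $\mathcal{K}$.
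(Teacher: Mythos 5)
Your forward direction is the paper's (invoke \Cref{p:computation_dg} and check the definitions), and your backward direction follows the paper's explicit construction via numeration systems. The paper never mentions the constraint $\sum_{b \in \BB} f_k(s(b)) \le q_{k-1}$ (equivalently $f_k(a') \ge 0$, forced by \eqref{eq:fn}), so you have correctly located the delicate point that the paper glosses over. However, your proposed repair does not close the gap: no contraction and no ``coordinated distribution of digits'' can succeed in general, because the relevant quantity is not the slack $1 - \sum_b \bm{z}(b)$ but a Diophantine condition on the fractional parts $\{Q\,\bm{z}(b)\}$. Concretely, take $|\BB| = 4$, $\bm{z}(b) = 1/6$ for every $b \in \BB$ (so $\sum_b \bm{z}(b) = 2/3 < 1$), $r_n = 2$ for all $n$, and any unit in $U^\BB \times \ZZ_{>0}$: this is a legitimate Ferenczi-type triple. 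Suppose nonnegative integers $f_k(s(b))$ satisfied $\sum_b f_k(s(b)) \le q_{k-1}$ and $\sum_{k \ge 1} f_k(s(b))/Q_{0,k} = 1/6$ for some contraction $(q_n)$ of $(r_n)$, so that each $Q_{0,k}$ is a power of $3$. Put $F_k(b) = Q_{0,k} \sum_{j \le k} f_j(s(b))/Q_{0,j} \in \ZZ_{\ge 0}$ and $T_k = \sum_b F_k(b)$. The digit bound gives $T_k + 1 \le (q_{k-1}+1)(T_{k-1}+1)$, so $(T_k+1)/Q_{0,k}$ is non-increasing; as it converges to $2/3$ it is everywhere $\ge 2/3$, i.e.\ $T_k \ge \tfrac23 Q_{0,k} - 1$. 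On the other hand, writing $Q_{0,k} = 3^m$, the inequality $F_k(b) \le Q_{0,k}/6 = 3^{m-1}/2$ forces $F_k(b) \le (3^{m-1}-1)/2$, whence $T_k \le \tfrac23 Q_{0,k} - 2$, a contradiction.

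So the obstruction is that one needs $\sum_b \{Q_{0,k}\, \bm{z}(b)\} \le 1$ along the chosen denominators, which $\sum_b \bm{z}(b) < 1$ does not guarantee; in the example above it fails for every power of $3$. This gap is present in the paper's own argument as well (its numeration fact controls each coordinate of $\bm{z}$ separately, never the simultaneous digit sum), so your proposal is no worse than the paper, but the sentence ``there is uniform positive slack to be absorbed into $f_k(a')$'' is where your argument breaks. To salvage the backward implication one would have to realize $\bm{z}$ only up to a unital order isomorphism of $\ZZ^\BB \times \ZZ[(r_n+1)_{n \ge 0}]$ --- replacing $\bm{z}$ by $uA\bm{z} + c$ with $A \in \mathrm{GL}(\ZZ^\BB)$, $c$ with entries in $\ZZ[(r_n+1)_{n \ge 0}]$ and $u$ a unit of that ring, chosen so that the fractional-part condition holds, and checking that the order unit can still be matched --- or else strengthen the definition of ``Ferenczi type''. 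As written, neither your argument nor the paper's establishes the statement.
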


\begin{example}
\noindent
\begin{enumerate}
    \item The \emph{Chacon subshift} is defined by the sequence of generating words $\WW$ which satisfies
    \[
    w_{n+1}
    = w_n w_n 1 w_n, \quad
    n \ge 0.
    \]
    \Cref{p:computation_dg} shows that the dimension group of the Chacon subshift is
    \[
    (\ZZ \times \ZZ[1/3], \quad
    \{(x,y) \in \ZZ \times \ZZ[1/3] : x + 2y > 0\} \cup \{(0,0)\}, \quad 
    (1,1)).
    \]
    This dimension group is unital order isomorphic to
    \[
    (\ZZ \times \ZZ[1/3], \quad
    \ZZ \times \ZZ_+[1/3], \quad
    (1,1)).
    \]
    
    \item The \emph{Thue--Morse subshift} is the subshift generated by the constant directive sequence $\btau = (\tau, \tau, \ldots)$, where the morphism $\tau : \{a, b\}^\ast \to \{a, b\}^\ast$ is given by $\tau(a) = a b$ and $\tau(b) = b a$.
    Its dimension group is
    \[
    (\ZZ \times \ZZ[1/2], \quad
    \{(x,y) \in \ZZ \times \ZZ[1/2] : -x + 3y > 0\} \cup \{(0,0)\}, \quad 
    (0,1)),
    \]
    see \cite[Exemple 4.6.11]{DP22}.
    
    We claim that the Thue--Morse subshift is not strongly orbit equivalent to a minimal Ferenczi subshift.
    Indeed, by \Cref{c:strong_ferenczi}, suppose that there exists a nonempty alphabet $\BB$, a sequence $(r_n)_{n \ge 0}$, an order unit $u \in U^\BB \times \ZZ_{>0}$ and a isomorphism
    \[
    \psi :
    \ZZ \times \ZZ[1/2]
    \to \ZZ^\BB \times \ZZ[(r_n + 1)_{n \ge 0}]
    \]
    such that $\psi(0,1) = u$.
    
    The existence of $\psi$ ensures the existence of an isomorphism between $(\ZZ \times \ZZ[1/2]) \otimes \QQ$ and $(\ZZ^\BB \times \ZZ[(r_n + 1)_{n \ge 0}]) \otimes \QQ$, and thus $|\BB| = 1$.
    
    For a prime number $p$, denote by $v_p(\cdot)$ the $p$-adic valuation.
    For an integer sequence $(a_n)_{n \ge 0}$, the sequence $(v_p(a_0 \ldots a_n))_{n \ge 0}$ is increasing, and hence it is eventually constant or tends to $+\infty$.
    We denote by $v_p((a_n)_{n \ge 0})$ the eventually constant value of it (either finite or $+\infty$).
    
    It is easy to show that $v_2((r_n + 1)_{n \ge 0}) = +\infty$ and $v_p((r_n + 1)_{n \ge 0}) = 0$ for $p \not= 2$, so we can suppose $\ZZ[(r_n + 1)_{n \ge 0}] = \ZZ[1/2]$.
    
    Write $\psi(0,1) = (m, w)$, $m \in \ZZ$, $w \in \ZZ[1/2]$ and $\psi(0, 1/2^n) = (m_n, w_n)$, $m_n \in \ZZ$, $w_n \in \ZZ[1/2]$.
    Then 
    \[
    (2^n m_n, 2^n w_n)
    = (m,w), \quad
    n \in \NN.
    \]
    In particular, $2^n$ divides $m$ for all $n \ge 0$, hence $m = 0$.
    If $\psi(1,0) = (d,v)$ for $d \in \ZZ$ and $v \in \ZZ[1/2]$, we obtain
    \[
    \psi(s,t)
    = (ds, sv + tw), \quad
    s \in \ZZ, \quad
    t \in \ZZ[1/2].
    \]
    We have $u = \psi(0,1) = (0, w)$, but $0$ does not belong to $U^\BB$.
    This shows that the Thue--Morse subshift is not strongly orbit equivalent to any minimal Ferenczi subshift.
\end{enumerate}
\end{example}

\subsection{Comments on orbit equivalence}

In this section we characterize the orbit equivalence class of minimal Ferenczi subshifts.
With this purpose, we compute explicitly the dimension group $K^0(X_\WW, S) / \Inf{K^0(X_\WW, S)}$ of a minimal Ferenczi subshift $(X_\WW, S)$.
Recall the definition of $\GG_\WW$, $\bm{z}$ and $u_\WW$ given in \Cref{p:computation_dg}.

\begin{proposition}\label{p:orbit_eq}
Let $(X_\WW, S)$ be a minimal Ferenczi subshift.
Let $\widetilde{\bm{z}}$ be the unique vector collinear to $\bm{z}$ and such that $u_\WW \cdot \widetilde{\bm{z}} = 1$.
Define
\[
\JJ_\WW
= \{x \cdot \widetilde{\bm{z}} : x \in \GG_\WW\}.
\]
Then, the dimension group $K^0(X_\WW, S) / \Inf{K^0(X_\WW, S)}$ is unital order isomorphic to
\[
(\JJ_\WW, \quad
\{y \in \JJ_\WW : y \ge 0\}, \quad
1).
\]
\end{proposition}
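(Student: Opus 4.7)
The plan is to leverage \Cref{p:computation_dg} by quotienting the concrete model $(\GG_\WW, \GG_\WW^+, u_\WW)$ by its infinitesimals explicitly. By \Cref{p:uniq_ergodic}, $(X_\WW, S)$ is uniquely ergodic, with unique invariant measure $\mu$. Hence $\Inf K^0(X_\WW, S)$ is the kernel of the single map $[f] \mapsto \int f \, d\mu$, and via the unital order isomorphism of \Cref{p:computation_dg}, this kernel is transported to the kernel of a single group homomorphism $\tau : \GG_\WW \to \RR$.

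The key step is to identify $\tau$ explicitly. The measure $\mu$ corresponds, via \Cref{c:induced_sadic}, to the unique invariant measure $\widehat{\mu}$ of $(X_{\widehat{\btau}_\WW}, S)$; and in the dimension-group picture of \Cref{ss:dg_sadic}, integration against $\widehat{\mu}$ corresponds to the pairing $y \mapsto y \cdot \widehat{\bmu}$ on $\HH_\WW$. A short linear-algebra computation shows that under the isomorphism $\psi : \HH_\WW \to \GG_\WW$ used in the proof of \Cref{p:computation_dg} (which sends $y$ to $(y', y(a'))$ with $y'(b) = y(b) - y(a')$), the pairing against $\widehat{\bmu}$ transforms into pairing against $\bm{z}$, precisely because $\bm{z}(b) = \widehat{\bmu}(b)$ for $b \in \BB_\WW$ and $\bm{z}(a') = 1 = \sum_{a \in \AA_\WW} \widehat{\bmu}(a)$. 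After normalizing so that $\tau(u_\WW) = 1$, we obtain $\tau(x) = x \cdot \widetilde{\bm{z}}$, so that $\Inf \GG_\WW = \ker \tau$.

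The first isomorphism theorem then gives a group isomorphism $\GG_\WW / \Inf \GG_\WW \cong \tau(\GG_\WW) = \JJ_\WW$ sending the class of $u_\WW$ to $1$. For the order, since $\widetilde{\bm{z}}$ is a positive scalar multiple of $\bm{z}$, we have $x \cdot \widetilde{\bm{z}} > 0$ if and only if $x \cdot \bm{z} > 0$; combined with the description of $\GG_\WW^+$, this yields $\tau(\GG_\WW^+) = \{y \in \JJ_\WW : y > 0\} \cup \{0\} = \{y \in \JJ_\WW : y \ge 0\}$. Since $(X_\WW, S)$ is a minimal Cantor system with a unique trace, the induced order on the quotient coincides with the one pulled back from $\RR$ via $\tau$, so the positive cone on $\JJ_\WW$ is exactly $\{y \ge 0\}$, as claimed.

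The only nontrivial step is the identification of the trace with pairing against $\widetilde{\bm{z}}$; this is essentially the content of the claim in the proof of \Cref{p:computation_dg} that $\HH_\WW^+ = \{y : y \cdot \widehat{\bmu} > 0\} \cup \{0\}$, transported through $\psi$. All remaining verifications (kernel description, order preservation, normalization of the unit) are straightforward bookkeeping.
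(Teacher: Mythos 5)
Your proposal is correct and follows essentially the same route as the paper: identify $\Inf \GG_\WW$ as the kernel of the pairing against $\widetilde{\bm{z}}$ (using unique ergodicity and the identity $y \cdot \widehat{\bmu} = \psi(y) \cdot \bm{z}$ from the proof of \Cref{p:computation_dg}), and then check that $[x] \mapsto x \cdot \widetilde{\bm{z}}$ is a unital order isomorphism onto $\JJ_\WW$. The paper's proof is just a terser version of the same argument, asserting these identifications without spelling out the transport through $\psi$ that you make explicit.
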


\begin{proof}
From \Cref{p:computation_dg}, we see that $\Inf \GG_\WW = \{x \in \GG_\WW : x \cdot \widetilde{\bm{z}} = 0\}$.
It is straightforward to check that the map
\begin{align*}
\GG_\WW / \Inf{\GG_\WW}
&\to \JJ_\WW \\
[x]
&\mapsto x \cdot \widetilde{\bm{z}}
\end{align*}
is an isomorphism between the dimension groups $\GG_\WW / \Inf{\GG_\WW}$ and $\JJ_\WW$.
Moreover, this map sends the induced image of $\GG_\WW^+$ in $\GG_\WW / \Inf{\GG_\WW}$ to $\{x \in \JJ_\WW : x \ge 0\}$ and $[u_\WW]$ to $1$ since $u_\WW \cdot \widetilde{\bm{z}} = 1$.
\end{proof}

In particular, observe that if $\bm{z}$ has rationally independent entries, then the strong orbit equivalence class of $(X_\WW, S)$ coincides with the orbit equivalence class.

One can check that $\widetilde{\bm{z}}
= c \bm{z}$, where
\[
c = 
\lim_{n \to +\infty} \frac{Q_{n_0 - 1, n_0 + n - 1}}{|w_{n_0 + n - 1}|}.
\]

\subsection{Continuous eigenvalues}\label{ss:continuous_eigs}

In this section and the following we give different proofs for results established in \cite{GZ19} concerning continuous eigenvalues, topological weak mixing and topological mixing of minimal Ferenczi subshifts.
These proofs can have their own interest.

In the following, we use some general results for the existence of continuous eigenvalues in the context of minimal Cantor systems stated in \cite{DFM19}.

\begin{proposition}\label{p:rational_cont_eigs}
The system $(X_\WW, S)$ has no continuous irrational eigenvalues.
Moreover, the complex value $\lambda = \exp(2 \pi i p/q)$ with $p/q$ a rational number is a continuous eigenvalue of $(X_\WW, S)$ if and only if there exists $n \ge 0$ such that $q$ divides $|w_n| + a_{m,i}$ for all $m \ge n$ and $0 \le i < q_m$.
\end{proposition}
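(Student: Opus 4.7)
My plan is to apply the general characterization of continuous eigenvalues for primitive, proper and recognizable $\SS$-adic subshifts developed in \cite{DFM19}, specialized to the directive sequence $\btau_\WW$ and the CKR partitions $(\TT_n)_{n \ge 0}$ from \eqref{eq:rep_sadic}, whose heights are $h_n(a) = a + |w_{n-1}|$ for $a \in \AA_n$ by \Cref{l:computation_heights}. The key fact I will use is that $\lambda = e^{2 \pi i \alpha}$ is a continuous eigenvalue of $(X_\WW, S)$ if and only if $\lambda^{h_n(a)} \to 1$ uniformly in $a \in \AA_n$ as $n \to +\infty$, with the eigenfunction then reconstructed from this data.

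For the necessary direction, I will take a continuous eigenfunction $f$ with $|f| \equiv 1$ and let $x^*$ be the unique point of $\bigcap_n B(\TT_n)$ given by condition (KR3). For any $x \in B_n(a)$, the point $S^{h_n(a)} x$ also lies in $B(\TT_n)$, so both points converge to $x^*$ as $n \to +\infty$; the eigenvalue equation $f(S^{h_n(a)} x) = \lambda^{h_n(a)} f(x)$ and $f(x^*) \ne 0$ yield $\lambda^{h_n(a)} \to 1$, and this convergence is uniform in $a \in \AA_n$ since $|\AA_n| \le |\AA_\WW|$. Because $h_n(a) - h_n(b) = a - b$ does not depend on $n$, uniformity forces $\lambda^{a - b} = 1$ for all $a, b \in \AA_\WW$. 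If $\alpha$ were irrational, this would force $a = b$, contradicting $d_\WW \ge 2$. So $\alpha = p/q$ is rational, and writing $p/q$ in lowest terms, the condition reduces to $q \mid h_n(a) = a + |w_{n-1}|$ for every $a \in \AA_n$ and every sufficiently large $n$. A short inductive calculation based on $|w_{n+1}| = (q_n + 1) |w_n| + \sum_i a_{n,i}$ shows that this divisibility, once valid at some level, propagates upward; combined with $\AA_{n+1} = \{a_{m,i} : m \ge n,\ 0 \le i < q_m\}$ this yields the condition as stated.

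For the sufficient direction, assume $\alpha = p/q$ in lowest terms and that $q \mid h_n(a)$ for every $a \in \AA_n$ and $n \ge n_0$. I define $f : X_\WW \to \CC$ by $f(x) = \lambda^{k_n(x)}$, where $(a_n(x), k_n(x))$ is the unique pair in $\AA_n \times [0, h_n(a_n(x)))$ with $x \in S^{k_n(x)} B_n(a_n(x))$. This is independent of $n \ge n_0$: if $\tau_n(b) = c_0 c_1 \cdots c_{\ell - 1}$ decomposes $\TT_{n+1}(b)$ into subtowers $\TT_n(c_j)$, the difference $k_{n+1}(x) - k_n(x)$ is a sum of some $h_n(c_i)$, hence divisible by $q$, so $\lambda^{k_{n+1}(x)} = \lambda^{k_n(x)}$. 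Continuity of $f$ follows from its constancy on the clopen atoms $S^k B_n([a])$. Shift equivariance $f \circ S = \lambda f$ is immediate inside a tower; at the top of $\TT_n(a)$, where the level resets from $h_n(a) - 1$ to $0$ inside a new tower, equivariance holds because $\lambda^{h_n(a)} = 1$ by hypothesis.

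The main obstacle is the combinatorial bookkeeping required to verify that the divisibility $q \mid |w_n| + a_{m,i}$ as phrased in the proposition is genuinely equivalent to the natural condition $q \mid h_n(a)$ for every $a \in \AA_n$ and every sufficiently large $n$ that emerges from the dynamical argument. This rests on the propagation step above together with the identification of $\AA_{n+1}$ as the set of values taken by the spacer parameters from level $n$ onward; once this equivalence is in place, both directions follow cleanly from the CKR partition structure.
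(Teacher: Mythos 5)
Your proposal is correct, and the core reduction is the same as the paper's: everything hinges on the height vectors $h_n(a) = a + |w_{n-1}|$ of the CKR partitions from \eqref{eq:rep_sadic}, via \Cref{l:computation_heights}. The difference is that the paper simply invokes two general criteria from \cite{DFM19} --- Theorem~1 there for the necessary condition $\inttt{\alpha h_n(a)} \to 0$, and Corollary~6 there for the equivalence, in the rational case, with $q$ dividing all coordinates of $h_{n+1}$ for some $n$ --- whereas you re-prove both criteria from scratch: the necessary direction via the shrinking bases $B(\TT_n)$ accumulating at the point $x^*$ of (KR3) together with $f(S^{h_n(a)}x) = \lambda^{h_n(a)} f(x)$, and the sufficient direction by explicitly building the locally constant eigenfunction $f(x) = \lambda^{k_n(x)}$ from the tower levels and checking well-definedness across levels and equivariance at tower tops. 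All of these steps are sound (the uniform convergence on $B(\TT_n)$ follows from $\diam(B(\TT_n)) \to 0$, which holds for nested compact sets with singleton intersection, and the level-change $k_{n+1}(x) - k_n(x)$ is indeed a partial sum of heights $h_n(c_i)$, hence divisible by $q$). What your version buys is self-containedness; what the paper's version buys is brevity and the ability to reuse the same citations for the measurable-eigenvalue analysis later. One small remark: the final translation to the stated divisibility condition needs no induction at all --- it is immediate from $h_{n+1}(a) = a + |w_n|$ and the definition of $\AA_{n+1}$ as the set of values $a_{m,i}$ with $m \ge n$; the inductive propagation $q \mid h_{n+1} \Rightarrow q \mid h_{n+2}$ (via $h_{n+2} = h_{n+1} M_{\tau_{n+1}}$) is only needed, as you note, to reconcile the ``there exists $n$'' and ``for all large $n$'' formulations.
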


\begin{proof}
Suppose that $\lambda = \exp(2 \pi i \alpha)$ is a continuous eigenvalue of $(X_\WW,S)$, where $\alpha$ is an irrational number.
Recall the definition of the height vectors in \eqref{eq:heights}.
From \cite[Theorem 1]{DFM19}, necessarily we have
\[
\lim_{n \to +\infty} \inttt{\alpha h_n(a)}
= 0, \quad
a \in \AA_\WW.
\]
From \eqref{eq:height_vectors}, we deduce $\inttt{\alpha(a - b)} = 0$ for all $a, b \in \AA_\WW$.
Since $d_\WW \ge 2$, this implies that $\alpha$ is a rational number, which contradicts our assumption.

Now, from \cite[Corollary 6]{DFM19}, the value $\lambda = \exp(2 \pi i p/q)$ is a continuous eigenvalue of the system if and only if there exists $n \ge 0$ such that $q$ divides all the coordinates of the height vector $h_{n+1}$.
From the definition of $\AA_{n+1}$ and by \eqref{eq:height_vectors}, this is equivalent to say that $q$ divides $|w_n| + a_{m,i}$ for all $m \ge n$ and $0 \le i < q_m$.
\end{proof}

\begin{corollary}[{\cite[Theorem 1.1]{GZ19}}]\label{c:top_wm}
A minimal Ferenczi subshift $(X_\WW, S)$ is topologically weakly-mixing if and only if for all integer $q > 1$ and all $n \ge 0$ there exists $m \ge n$ and $0 \le i < q_m$ such that $q$ does not divide $|w_n| + a_{m,i}$.
\end{corollary}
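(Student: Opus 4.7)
The plan is to derive this corollary directly from \Cref{p:rational_cont_eigs} by negating the characterization of continuous eigenvalues. Recall that in a minimal system, topological weak-mixing is equivalent to the absence of nonconstant continuous eigenfunctions, and since every continuous eigenfunction of a minimal system has constant modulus, this is in turn equivalent to saying that $1$ is the only continuous eigenvalue of $(X_\WW, S)$.

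First I would invoke the first part of \Cref{p:rational_cont_eigs}, which rules out any continuous irrational eigenvalue for $(X_\WW, S)$. Consequently, the only continuous eigenvalues are of the form $\lambda = \exp(2\pi i p/q)$ with $p/q \in \QQ$. Thus $(X_\WW, S)$ is topologically weakly-mixing if and only if no complex number of the form $\exp(2\pi i p/q)$ with $q > 1$ (and $p/q$ not an integer) is a continuous eigenvalue.

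Next I would apply the second part of \Cref{p:rational_cont_eigs}, which characterizes when $\exp(2\pi i p/q)$ is a continuous eigenvalue: this happens if and only if there exists $n \ge 0$ such that $q$ divides $|w_n| + a_{m,i}$ for all $m \ge n$ and all $0 \le i < q_m$. Observe that this condition depends only on $q$ (not on $p$), so the existence of a rational continuous eigenvalue $\exp(2\pi i p/q) \neq 1$ is equivalent to the existence of an integer $q > 1$ satisfying this divisibility condition.

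Finally I would take the logical negation: $(X_\WW, S)$ is topologically weakly-mixing if and only if no integer $q > 1$ admits such an $n \ge 0$, which by negating the universal quantifier is precisely the condition that for every integer $q > 1$ and every $n \ge 0$, there exist $m \ge n$ and $0 \le i < q_m$ such that $q$ does not divide $|w_n| + a_{m,i}$. There is no real obstacle here; the proof is a direct logical reformulation of \Cref{p:rational_cont_eigs}, so the only thing to be careful about is the correct quantifier flipping and the observation that ruling out continuous irrational eigenvalues (the first half of the proposition) is needed to reduce to the rational case.
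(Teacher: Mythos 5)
Your proposal is correct and matches the paper's intent exactly: the paper states \Cref{c:top_wm} as an immediate consequence of \Cref{p:rational_cont_eigs} with no written proof, and your argument (rule out irrational continuous eigenvalues, then negate the divisibility characterization of rational ones, noting the condition depends only on $q$) is precisely the logical reformulation being relied upon.
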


\Cref{p:rational_cont_eigs} allows us to compute the \emph{maximal equicontinuous factor} of the subshift $(X_\WW, S)$.
We refer to \cite[Section 2.6]{Kur03} for the general definition of the maximal equicontinuous factor.

For an integer $q > 1$, we denote by $X_q$ the topological dynamical system
\[
X_q
= (\ZZ / q\ZZ, +1 \pmod{q}).
\]
Since $(X_\WW, S)$ is minimal, the complex value $\lambda = \exp(2 \pi i p/q)$ with $p, q$ coprime integers is a rational continuous eigenvalue of $(X_\WW, S)$ if and only if $X_q$ is a topological factor of $(X_\WW, S)$.

\begin{corollary}[{\cite[Theorem 1.5]{GZ19}}]\label{c:max_eq_factor}
Let $(X_\WW, S)$ be a minimal Ferenczi subshift.
Then, there exists a maximal integer $q_{\max}$ such that $X_{q_{\max}}$ is a topological factor of $(X_\WW, S)$.
Moreover, this factor corresponds to the maximal equicontinuous factor of $(X_\WW, S)$.
\end{corollary}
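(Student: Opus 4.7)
The strategy is to combine \Cref{p:rational_cont_eigs} with the classical fact that, for a minimal system, the maximal equicontinuous factor is determined by the group of continuous eigenvalues via Pontryagin duality. Let $Q$ denote the set of integers $q \ge 1$ such that $X_q$ is a topological factor of $(X_\WW, S)$. Since $(X_\WW, S)$ is minimal, $q \in Q$ if and only if $\exp(2 \pi i / q)$ is a continuous eigenvalue of $(X_\WW, S)$; by \Cref{p:rational_cont_eigs} this is in turn equivalent to the existence of some $n \ge 0$ such that $q$ divides $|w_n| + a_{m,i}$ for all $m \ge n$ and $0 \le i < q_m$ (recalling that there are no continuous irrational eigenvalues). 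My goal is to show that $Q$ has a maximum $q_{\max}$ which is divisible by every $q \in Q$, and then to identify $X_{q_{\max}}$ with the maximal equicontinuous factor.

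I will first show that $Q$ is bounded. If $q \in Q$ admits a witness $n \ge n_0$ (with $n_0$ as in \eqref{eq:A_WW}), then the divisibility condition forces all elements of $\AA_\WW$ to be mutually congruent modulo $q$, and since $d_\WW \ge 2$ this yields the uniform bound $q \le \max \AA_\WW - \min \AA_\WW$. To upgrade boundedness into closure of $Q$ under least common multiples, I will prove the arithmetic lemma: whenever $q \in Q$ has witness $n$, one has $|w_{n'}| \equiv |w_n| \pmod{q}$ for every $n' \ge n$. Indeed, $a_{n,i} \equiv -|w_n| \pmod{q}$ gives $\sum_{i=0}^{q_n-1} a_{n,i} \equiv -q_n |w_n| \pmod{q}$, so \eqref{eq:lengths} yields $|w_{n+1}| \equiv (q_n + 1) |w_n| - q_n |w_n| = |w_n| \pmod{q}$, and induction concludes. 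Given this lemma, any $q_1, q_2 \in Q$ with witnesses $n_1, n_2$ share the common witness $\max(n_1, n_2)$, whence $\lcm(q_1, q_2) \in Q$. Consequently $q_{\max} := \lcm Q$ lies in $Q$ and is divided by every element of $Q$.

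For the identification with the maximal equicontinuous factor, I will invoke Pontryagin duality. The above analysis shows that the group of continuous eigenvalues of $(X_\WW, S)$ is $E = \{\exp(2 \pi i k / q_{\max}) : k \in \ZZ\}$, a cyclic group of order $q_{\max}$. Any equicontinuous factor $(Y, T_Y)$ of $(X_\WW, S)$ is a minimal equicontinuous system and hence conjugate to a minimal rotation on a compact abelian group whose character group $\widehat Y$ embeds into $E$; therefore $\widehat Y$ is a subgroup of the cyclic group of order $q_{\max}$, and by Pontryagin duality $(Y, T_Y)$ is conjugate to $X_q$ for some divisor $q$ of $q_{\max}$. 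Such an $X_q$ factors through $X_{q_{\max}}$ via the canonical reduction map, so $X_{q_{\max}}$ is indeed the maximal equicontinuous factor.

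The main obstacle, I expect, is the arithmetic step showing that $Q$ is closed under least common multiples, as this is where the specific recursive structure of the Ferenczi generating words genuinely intervenes; once it is in hand, the remainder reduces to the standard dictionary between continuous eigenvalues and maximal equicontinuous factors for minimal systems.
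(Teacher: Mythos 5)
Your proof is correct and follows essentially the same route as the paper: both use \Cref{p:rational_cont_eigs} to bound the set $Q$ of admissible integers $q$ by $\max_{a,b \in \AA_\WW} |a-b|$ and then identify $X_{q_{\max}}$ with the maximal equicontinuous factor through the group of continuous eigenvalues. The only difference is that you justify the closure of $Q$ under least common multiples by an explicit induction on \eqref{eq:lengths} showing $|w_{n'}| \equiv |w_n| \pmod{q}$, where the paper merely asserts that $X_{q_{\max}}$ has the same group of continuous eigenvalues as $(X_\WW, S)$; your arithmetic lemma is correct, and the same closure also follows immediately from the fact that the continuous eigenvalues of a minimal system form a multiplicative group.
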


\begin{proof}
If $X_q$ is a topological factor of $(X_\WW, S)$, then $\lambda = \exp(2 \pi i / q)$ is a rational continuous eigenvalue of $(X_\WW, S)$.
By \Cref{p:rational_cont_eigs}, $q$ must divide $a - b$ for all $a, b \in \AA_\WW$.
In particular, $q \le \max_{a, b \in \AA_\WW} |a - b|$.
We deduce that there exist a finite set $\mathcal{C}$ of integers $q$ such that $X_q$ is a topological factor of $(X_\WW, S)$.

We let $q_{\max} = \max \mathcal{C}$.
It is straightforward to see that $X_{q_{\max}}$ has the same group of continuous eigenvalues as $(X_\WW, S)$ and that $q_{\max}$ is the maximal value of an integer $q$ such that $X_q$ is a topological factor of $(X_\WW, S)$.
From \cite[Theorem 2.56]{Kur03}, we conclude that $X_{q_{\max}}$ is the maximal equicontinuous factor of $(X_\WW, S)$.
\end{proof}

\begin{example}\label{ex:top_eigs}
\noindent
\begin{enumerate}
    \item Suppose that $\gcd(a - b : a, b \in \AA_\WW) = 1$, where $\gcd$ stands for the greatest common divisor.
    From \Cref{p:rational_cont_eigs}, we deduce that the system $(X_\WW, S)$ is topologically weakly-mixing.
    
    \item Let $q \in \NN$.
    Suppose that $q$ divides $a_i$ for $1 \le i \le \ell$ and also $q$ divides $q_0 + 1$.
    From \Cref{p:rational_cont_eigs}, we deduce that $\lambda = 1/q$ is a continuous eigenvalue of $(X_\WW, S)$.
\end{enumerate}
\end{example}

\subsection{Topological mixing}

We recall that a topological dynamical system $(X, T)$ is said to be \emph{topologically mixing} if for any nonempty open sets $U, V \subseteq X$, there exists $N \in \NN$ such that
\[
T^n U \cap V \not= \emptyset, \quad
n \ge N.
\]
It is classical to observe that topological mixing implies topological weak mixing.

We will prove that minimal Ferenczi subshifts cannot be topologically mixing.

For a subshift $(X, S)$ with $X \subseteq \{0,1\}^\ZZ$, we define the quantities 
\[
a(n)
= \min_{w \in \LL_n(X)} |w|_0 \quad
\text{and} \quad
b(n)
= \max_{w \in \LL_n(X)} |w|_0, \quad
n \ge 1.
\]

\begin{lemma}\label{l:nec_mixing}
Suppose that $(X, S)$ is minimal and topologically mixing.
Then 
\[
\lim_{n \to +\infty} b(n) - a(n)
= +\infty.
\]
\end{lemma}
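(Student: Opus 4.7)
The plan is to proceed by contradiction: suppose $(X, S)$ is minimal and topologically mixing (and not reduced to a single fixed point, where the conclusion fails trivially), yet $g(n) := b(n) - a(n)$ does not tend to $+\infty$. Then there exist $M \in \NN$ and $n_j \to +\infty$ with $g(n_j) \leq M$. First I would extract from iterated topological mixing the key inequality
\[
g(kn + (k-1)t) \geq k\, g(n) - (k-1)\, g(t)
\]
for every $k \geq 2$ and every sufficiently large $t$. This is obtained by fixing $u, v \in \LL_n(X)$ with $|u|_0 = b(n)$ and $|v|_0 = a(n)$ and building, via $k-1$ successive applications of mixing, concatenations $u \alpha_1 u \alpha_2 \cdots u$ and $v \beta_1 v \cdots v$ with $k$ copies of $u$ (resp. $v$) separated by bridges of length $t$; both words have the same length $L = kn + (k-1)t$, and their $0$-counts are respectively at least $k b(n) + (k-1) a(t)$ and at most $k a(n) + (k-1) b(t)$, so their difference is bounded above by $g(L)$.

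I would then split into two cases according to whether $\limsup_n g(n)$ exceeds $M$. In the first case, fix $n_*$ with $g(n_*) > M$ and some large $t_*$ with $g(t_*) \leq M$ (possible since $\liminf g \leq M$ gives an infinite set of such $t$). The key inequality yields $g(L_k) \geq k(g(n_*) - M) + M \to +\infty$ along the arithmetic progression $L_k = k n_* + (k-1) t_*$; combined with the trivial bound $|g(L+1) - g(L)| \leq 1$ and the fact that consecutive $L_k$'s have common difference $n_* + t_*$, this forces $g(L) \to +\infty$ for every large $L$, contradicting $g(n_j) \leq M$. In the second case, $g$ is globally bounded; then $\rho := \lim a(n)/n$ exists and equals $\lim b(n)/n$, and for every $x \in X$ the quantity $|x_{[0, n)}|_0 - n\rho$ is uniformly bounded. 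By the Gottschalk--Hedlund theorem (or a direct argument in the spirit of \cite{DFM19}), $\chi_{[0]} - \rho$ is then a continuous coboundary, so $e^{-2\pi i \rho}$ is a continuous eigenvalue of $(X, S)$. A short separate argument using iterated mixing on the cylinder $[0]$ (resp. $[1]$) rules out $\rho \in \{0, 1\}$ for nontrivial $X$, making this eigenvalue nontrivial, which contradicts topological weak mixing (implied by topological mixing).

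The hardest part will be the bounded-$g$ case: one must go from the quantitative bounds $a(n), b(n) = n\rho + O(1)$ to a genuinely \emph{uniform} pointwise bound on $|x_{[0, n)}|_0 - n\rho$ in order to apply Gottschalk--Hedlund and extract a continuous eigenfunction. The exclusion of the degenerate values $\rho \in \{0, 1\}$ relies on topological mixing producing arbitrarily long concatenations containing unboundedly many $0$'s (resp. $1$'s), which is incompatible with $b(n) = O(1)$ (resp. $n - a(n) = O(1)$) when $X$ is nontrivial. The unbounded case, by contrast, is essentially combinatorial: the work there is a careful bookkeeping between the key inequality and the unit-step property of $g$.
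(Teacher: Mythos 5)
Your ``bounded case'' is exactly the paper's core argument: superadditivity of $a$ and subadditivity of $b$ give $a(n)\le n\rho\le b(n)$ with $\rho=\mu([0])$, hence a uniform bound on the Birkhoff sums of $\chi_{[0]}-\rho$, and Gottschalk--Hedlund produces a continuous eigenvalue $\exp(2\pi i\rho)$ contradicting topological weak mixing. (Your extra care about excluding $\rho\in\{0,1\}$ and the one-point system is legitimate; the paper glosses over it.) The two proofs diverge on how oscillation of $g(n)=b(n)-a(n)$ is ruled out: the paper simply quotes \cite[Proposition 3.2]{KSS05}, which asserts $\liminf_n g(n)=\sup_n g(n)$ for topologically mixing subshifts on two letters, so that it only remains to show $\sup_n g(n)=+\infty$. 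You instead try to prove the no-oscillation statement from scratch, and that is where your argument has a genuine gap.

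The problem is the ``key inequality'' $g(kn+(k-1)t)\ge kg(n)-(k-1)g(t)$, claimed for every $k$ and every sufficiently large $t$. Building $u\alpha_1u\cdots u$ with all bridges of the \emph{same} prescribed length $t_*$ amounts to showing $\bigcap_{i=0}^{k-1}S^{-i(n+t_*)}[u]\ne\emptyset$, a $k$-fold mixing statement along an arithmetic progression. This does not follow from ``$k-1$ successive applications of mixing'': at step $i$ you must apply mixing to the pair $([w_i],[u])$, where $w_i$ is the word built so far, and the mixing threshold $N_i$ depends on $w_i$ --- which in turn depends on $t_*$ and on the earlier choices --- so you cannot guarantee that the fixed gap $t_*$ clears $N_i$ at every step. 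If you repair this by choosing the gaps adaptively ($t_i\ge N_i$, with $g(t_i)\le M$), the inequality survives as $g(L_k)\ge kg(n_*)-\sum_i g(t_i)$, but the lengths $L_k=kn_*+\sum_i t_i$ are no longer an arithmetic progression and need not have bounded gaps, so the final step of your unbounded case --- combining $g(L_k)\to\infty$ with $|g(L+1)-g(L)|\le1$ along a progression of bounded common difference to force $g(L)\to\infty$ for \emph{all} large $L$ --- collapses. And the linear control really is needed: writing $L=qn_j+r$ gives $g(L)\le qg(n_j)+g(r)\le \lfloor L/n_j\rfloor M+n_j$, so $g(L)=o(L)$ under your standing assumption, and a contradiction only arises if $g(L_k)$ grows at a definite linear rate in $L_k$. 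In short, the unbounded case needs either the quantitative mixing input that \cite{KSS05} actually supplies, or the citation itself; as written it is not a proof.
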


\begin{proof}
Following \cite[Proposition 3.2]{KSS05}, if $(X, S)$ is topologically mixing we have
\begin{equation}\label{eq:KSS}
\liminf_{n \to +\infty} b(n) - a(n)
= \sup_{n \ge 1} b(n) - a(n).
\end{equation}

\begin{claim}
For any invariant ergodic probability measure $\mu$ of $(X, S)$ we have
\[
a(n)
\le n \mu([0])
\le b(n), \quad
n \ge 1.
\]
\end{claim}
 
Indeed, it is easy to see that the sequence $(a(n) / n)_{n \ge 1}$ is superadditive, \emph{i.e.}, $a(m+n) \ge a(m) + a(n)$ for all $m, n \ge 1$ and that the sequence $(b(n) / n)_{n \ge 1}$ is subadditive, \emph{i.e.}, $b(m+n) \le b(m) + b(n)$ for $m, n \ge 1$.
In particular, we deduce 
\[
\lim_{n \to +\infty} \frac{a(n)}{n}
= \sup_{n \ge 1} \frac{a(n)}{n} \quad
\text{and} \quad
\lim_{n \to +\infty} \frac{b(n)}{n}
= \inf_{n \ge 1} \frac{b(n)}{n}.
\]

Observe that
\[
\frac{a(n)}{n}
\le \frac{1}{n}
\#\{0 \le k < n : x_k = 0\}
= \frac{1}{n} \sum_{k=0}^{n-1} \chi_{[0]}(S^k x), \quad
x \in X, \quad
n \ge 1.
\]
By Birkhoff's theorem, there exists $x \in X$ such that
\[
\lim_{m \to +\infty} \frac{1}{m} \sum_{k=0}^{m-1} \chi_{[0]}(S^k x)
= \mu([0]).
\]
Hence, since $\frac{a(n)}{n} \le \sup_{m \ge 1} \frac{a(m)}{m}$, we obtain $\frac{a(n)}{n} \le \mu([0])$.
Analogously, we obtain $\mu([0]) \le \frac{b(n)}{n}$.
We obtain
\begin{equation}\label{eq:Gottschalk_Hedlund}
\Big\vert
\sum_{k=0}^{n-1} \chi_{[0]}(S^k x) - n \mu([0])
\Big\vert
\le b(n) - a(n), \quad
x \in X, \quad
n \ge 1.
\end{equation}
Let $f = \chi_{[0]} - \mu([0])$.
If $\limsup_{n \to +\infty} b(n) - a(n) \not= +\infty$, by \eqref{eq:Gottschalk_Hedlund} we deduce that there exists a constant $C > 0$ such that $\vert \sum_{k=0}^{n-1} f(S^k x) \vert \le C$, for all $x \in X$, $n \ge 1$.
Gottschalk--Hedlund's theorem then implies that $f = g - g \circ S$ for some continuous map $g : X \to \RR$.
In particular,
\[
\exp(2 \pi i g \circ S)
= \exp(2 \pi i \mu([0])) \exp(2 \pi i g),
\]
\emph{i.e.}, $\exp(2 \pi i \mu([0]))$ is a nontrivial continuous eigenvalue of $(X, S)$.
This contradicts the fact that $(X, S)$ is topologically weakly-mixing.

Finally, we deduce $\limsup_{n \to +\infty} b(n) - a(n) = +\infty$ and, together with \eqref{eq:KSS}, we obtain $\lim_{n \to +\infty} b(n) - a(n) = +\infty$.
\end{proof}

\Cref{c:top_wm} characterizes topological weak mixing of minimal Ferenczi subshifts.
On the other hand, we now give an alternative proof of the following result.

\begin{proposition}[{\cite[Theorem 1.3]{GZ19}}]
Minimal Ferenczi subshifts are not topologically mixing.
\end{proposition}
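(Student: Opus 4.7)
The plan is to argue by contradiction against \Cref{l:nec_mixing}. Let $K = \sup\{a_{n,i} : n \ge 0,\ 0 \le i < q_n\}$, which is finite since $\WW$ defines a minimal Ferenczi subshift. If $(X_\WW, S)$ were topologically mixing, \Cref{l:nec_mixing} would force $\lim_{n \to +\infty} b(n) - a(n) = +\infty$. I would contradict this by exhibiting an unbounded sequence of lengths along which $b - a$ stays at most $K$; concretely, I would show that
\[
b(|w_k|) - a(|w_k|) \le K \qquad \text{for every } k \ge 1.
\]

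The key combinatorial claim is that every factor $u \in \LL_{|w_k|}(X_\WW)$ satisfies $Q_{0,k} - K \le |u|_0 \le Q_{0,k}$. Since $w_k$ itself is a factor of length $|w_k|$ with $|w_k|_0 = Q_{0,k}$, the claim will give $b(|w_k|) = Q_{0,k}$ and $a(|w_k|) \ge Q_{0,k} - K$, whence the claimed bound on the difference. To prove the claim I would use recognizability of $\widetilde{\btau}_\WW$ at level $k+1$ (\Cref{l:recognizability}) together with \Cref{l:natural_subs}, which gives $\widetilde{\tau}_{[0, k+1)}(a) = w_k 1^a$ for $a \in \AA_{k+1} \subseteq [0, K]$. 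Every $x \in X_\WW$ then decomposes uniquely as a bi-infinite concatenation of blocks $w_k 1^{y_j}$, each of length $|w_k| + y_j$. A window $u$ of length $|w_k|$ cannot properly contain any full block (as each has length $\ge |w_k|$), so $u$ overlaps at most two consecutive blocks, and a brief case analysis on its starting offset in the enclosing block will show $|u|_0 = Q_{0,k} - |w_k[i, i+\ell)|_0$ for some offset $i$ and some length $\ell \le K$, which lies in $[Q_{0,k} - K, Q_{0,k}]$.

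The main obstacle is organizing the case analysis. One has to handle (i) $u$ equal to a full block, i.e., $u = w_k$; (ii) $u$ sitting inside a single block but shifted by some $\ell \in [1, y_j]$ into the $1^{y_j}$ tail, giving $|u|_0 = Q_{0,k} - |w_k[0, \ell)|_0$; (iii) $u$ straddling two consecutive blocks $w_k 1^{y_{j-1}} \cdot w_k 1^{y_j}$, with sub-cases depending on whether the starting offset lies inside the first $w_k$ or inside its trailing $1$-run. In each configuration a telescoping computation isolates the "missing" $0$'s in a window of $w_k$ whose length is exactly the spacer crossed, and this spacer is at most $K$. The fact that $K$ bounds every spacer at every level is what makes the argument work uniformly for every $k$.
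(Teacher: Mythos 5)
Your proposal is correct and follows essentially the same route as the paper: both arguments invoke \Cref{l:nec_mixing} and then bound $b(|w_k|)-a(|w_k|)$ by the maximal spacer, using the fact that every factor of length $|w_k|$ sits inside a concatenation of blocks $w_k 1^{\alpha}$ and therefore misses at most one spacer's worth of the zeros of $w_k$ (the paper phrases this via the decomposition $w_j = w_k 1^{\alpha_0} w_k \cdots w_k$ rather than via recognizability, but the counting is identical).
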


\begin{proof}
Let $(X_\WW, S)$ be a minimal Ferenczi subshift and consider the increasing sequence $(n_k)_{k \ge 0}$ given by $n_k = |w_k|$, $k \ge 0$.
Recall that the sequence $(a_{n,i} : n \ge 0,\ 0 \le i < q_n)$ takes finitely many values $a_1, a_2, \ldots, a_\ell$.

Let $k \ge 0$ and $u$ in $\LL_{n_k}(X_\WW)$.
Then $u$ must be a factor of $w_j$ for some $j > k$.
By induction, it is easy to see that we can write
\[
w_j
= w_k 1^{\alpha_0} w_k 1^{\alpha_1} \ldots w_k 1^{\alpha_{l-1}} w_k
\]
for some $l \in \NN$ and where $\alpha_i$ belongs to $\{a_1, \ldots, a_\ell\}$ for $0 \le i < l$.
From this we deduce that $b(n_k) = |w_k|_0$ and that there exists $\alpha \in \{a_1, a_2, \ldots, a_\ell\}$, some prefix (resp. suffix) $p$ (resp. $s$) of the word $w_k$ that satisfy $|p| + |s| = |w_k| - \alpha$ and $a(n_k) = |p|_0 + |s|_0$.
We obtain
\[
b(n_k) - a(n_k)
\le \alpha \le \max_{1 \le i \le \ell} a_i, \quad
k \ge 0.
\]
In particular $\lim_{n \to +\infty} b(n) - a(n) \not= +\infty$, and we conclude by \Cref{l:nec_mixing}.
\end{proof}

\begin{remark}
\Cref{p:uniq_ergodic} implies that minimal Ferenczi subshifts are not mixing with respect to its unique invariant probability measure.
\end{remark}

\subsection{Asymptotic classes and automorphism group}

Let $(X, T)$ be a topological dynamical system and $d : X \times X \to \RR$ be a metric on $X$.
We say that two points $x,y \in X$ are \emph{asymptotic} if
\[
\lim_{n \to +\infty} d(T^n x, T^n y)
= 0.
\]
Nontrivial asymptotic pairs of points may not exist in an arbitrary topological dynamical system, but they always exist in the context of nonempty aperiodic subshifts \cite[Chapter 1]{Aus88}.

We define the relation $\sim$ in $X$ as follows: $x \sim y$ if $x$ is asymptotic to $T^k y$ for some $k\in \ZZ$.
This defines an equivalence relation.
An equivalence class for $\sim$ that is not the orbit of a single point is called an \emph{asymptotic class}.

An \emph{automorphism} of a topological dynamical system $(X, T)$ is a homeomorphism $\phi : X \to X$ such that
\[
\phi \circ T
= T \circ \phi.
\]
We denote by $\Aut(X, T)$ the group of automorphism of $(X, T)$ and by $\langle T \rangle$ the subgroup of $\Aut(X, T)$ generated by integer powers of $T$.

For a minimal Ferenczi subshift we show that there exists a unique asymptotic class.
We first need the following lemma, for which we recall the definition of cutting points given in \Cref{ss:recognizability}.

\begin{lemma}\label{l:asymptotic}
Let $\tau : \AA^\ast \to \BB^\ast$ be a nonerasing morphism, $X \subseteq \AA^\ZZ$ be a subshift and $Y = \bigcup_{k \in \ZZ} S^k \tau(X)$.
Assume that $\tau$ is recognizable in $X$ and that if $a$ and $b$ are two distinct letters in $\AA$, then $\tau(a)$ is not a suffix of $\tau(b)$.
Let $y, y'$ in $Y$ be such that $y_0 \not= y'_0$ and $y_{(0, +\infty)} = y'_{(0, +\infty)}$.
Suppose that $(k, x)$ and $(k', x')$ are the unique centered $\tau$-representations of $y$ and $y'$ in $X$, respectively.
Then
\[
\mathcal{C}_\tau^+(k, x)
= \mathcal{C}_\tau^+(k', x'), \quad
x_0 \not= x'_0 \quad
\text{and} \quad
x_{(0, +\infty)} = x'_{(0, +\infty)}.
\]
\end{lemma}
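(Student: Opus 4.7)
The plan is to establish the three conclusions in order: first the cutting-point agreement $\mathcal{C}_\tau^+(k,x) = \mathcal{C}_\tau^+(k',x')$, then $x_0 \neq x'_0$, then $x_\ell = x'_\ell$ for $\ell \ge 1$. Throughout I would write $m_\ell = C_\tau^\ell(k,x)$ and $m'_\ell = C_\tau^\ell(k',x')$, noting that the no-suffix hypothesis in particular implies that $\tau$ is injective on letters (two images of equal length, each a suffix of the other, must coincide).

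The heart of the argument is the base step $m_1 = m'_1$, which I would argue by contradiction. Assume $m_1 < m'_1$ and let $r \ge 2$ be minimal with $m_r \ge m'_1$. In the easier subcase $m_r = m'_1$, the block $\tau(x_{r-1})$ in $y$'s decomposition and the block $\tau(x'_0)$ in $y'$'s decomposition both end at position $m'_1 - 1$, starting respectively at $m_{r-1} \ge m_1 \ge 1$ and $-k' \le 0$; since $y$ and $y'$ coincide on $[1, +\infty)$ and $m_{r-1} > -k'$, the word $\tau(x_{r-1})$ is a strict suffix of $\tau(x'_0)$, contradicting the hypothesis (injectivity rules out the equality $x_{r-1}=x'_0$, which would force equal lengths). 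In the harder subcase $m_r > m'_1$, the position $m'_1$ lies strictly inside $\tau(x_{r-1})$, and I would iterate the analysis along $y'$'s cutting points $m'_2, m'_3, \ldots$: either some $m'_t$ eventually coincides with a cutting point $m_s$ of $y$, reducing to the previous subcase (applied to the blocks ending at position $m'_t-1$), or no common cutting point ever occurs, producing two persistently misaligned $\tau$-decompositions of the same one-sided sequence $y_{[1,+\infty)}$, which contradicts the recognizability of $\tau$ in $X$. Once $m_1=m'_1$ is established, the inductive step $m_\ell = m'_\ell$ for $\ell > 1$ follows by applying the base case to the sequences shifted by $m_\ell$.

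The remaining conclusions are then immediate. If $x_0 = x'_0$ then $|\tau(x_0)| = |\tau(x'_0)|$, and combined with $m_1=m'_1$ this yields $k=k'$, hence $y_0 = \tau(x_0)_k = \tau(x'_0)_{k'} = y'_0$, contradicting $y_0 \neq y'_0$; so $x_0 \ne x'_0$. With cutting points aligned, for each $\ell \ge 1$ the blocks $\tau(x_\ell)$ and $\tau(x'_\ell)$ occupy the same positions $[m_\ell, m_{\ell+1})$, all strictly positive, so $y_{(0,+\infty)} = y'_{(0,+\infty)}$ forces $\tau(x_\ell) = \tau(x'_\ell)$, and injectivity of $\tau$ yields $x_\ell = x'_\ell$. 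The chief technical obstacle is the second subcase of the cutting-point argument: ruling out an indefinitely misaligned pair of $\tau$-decompositions requires combining the suffix hypothesis with the full recognizability of $\tau$ in $X$, and this is where I would expect the finest care to be needed.
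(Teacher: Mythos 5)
Your overall strategy coincides with the paper's: locate a pair of coinciding cutting points at positive coordinates, then walk backwards using the no-suffix hypothesis (plus the injectivity of $\tau$ on letters that it implies) to align all earlier cutting points and letters. The backward walk, the deduction $x_0 \ne x'_0$ from $y_0 \ne y'_0$, and the identification $x_{(0,+\infty)} = x'_{(0,+\infty)}$ are correct and essentially identical to the paper's argument.

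The gap is in your ``harder subcase'', where you dismiss the scenario of two persistently misaligned decompositions of $y_{(0,+\infty)}$ as ``contradicting the recognizability of $\tau$ in $X$''. Recognizability, as defined, asserts that a \emph{single} point of $\BB^\ZZ$ has at most one centered $\tau$-representation in $X$; here $y$ and $y'$ are distinct points (they differ at coordinate $0$), each carrying its own unique representation, so no contradiction follows directly from the definition. What is needed --- and what the paper invokes --- is the local, uniform form of recognizability from \cite[Lemma 3.2]{DDMP21}: there is a constant $R > 0$ such that whenever two centered $\tau$-representations in $X$ of points of $Y$ agree on the window $[-R, R)$, they have the same offset and the same zeroth letter. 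Applying this to $S^j y$ and $S^j y'$ with $j = C_\tau^{\ell}(k,x)$ for $\ell$ large (so that the two shifted points coincide on $[-R,R)$) forces $j$ to lie in $\mathcal{C}_\tau^+(k',x')$ as well, and in fact yields infinitely many common cutting points, which is exactly the input your backward walk requires. Without this uniformity statement --- or an equivalent compactness argument that manufactures a single point with two distinct centered representations out of the misaligned pair --- the base step of your induction is not established; you flag this yourself as the unresolved obstacle. Everything downstream of that step is fine.
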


\begin{proof}
We begin by proving the following.

\begin{claim}
There exist infinitely many pairs $(\ell, \ell')$ with $\ell, \ell' \ge 0$ such that
\[
C_\tau^\ell(k, x)
= C_\tau^{\ell'}(k', x').
\]
\end{claim}

Indeed, by \cite[Lemma 3.2]{DDMP21} there exists a constant $R > 0$ such that if $(k, x)$ and $(k', x')$ are two centered $\tau$-representations in $X$ of points $y, y' \in Y$ and $y_{[-R, R)} = y'_{[-R, R)}$, then $k = k'$ and $x_0 = x'_0$.

Arguing by contradiction, if the claim is not true and since $y_{(0, +\infty)} = y'_{(0, +\infty)}$ there exists $\ell_0 \ge 0$ such that if $\ell \ge \ell_0$ then $C_\tau^\ell(k, x) \notin \mathcal{C}_\tau^+(k', x')$ and $(S^j y)_{[-R, R)} = (S^j y')_{[-R, R)}$, where $j = C_\tau^{\ell_0}(k, x)$.
But then $j$ belongs to $\mathcal{C}_\tau^+(k', x')$, a contradiction.

By the claim, there exists an increasing sequence $(\ell_n)_{n \ge 0}$ such that $\ell_n \ge 0$ and
\[
C_\tau^{\ell_n}(k, x)
= C_\tau^{\ell'_n}(k', x'), \quad
\text{for some} \quad
\ell'_n \ge 0.
\]
If $\ell_n \ge 2$, since $y_{(0, +\infty)} = y'_{(0, +\infty)}$ and $C_\tau^{\ell_n}(k, x) = C_\tau^{\ell'_n}(k', x')$, we deduce that $\tau(x_{\ell_n - 1})$ is a suffix of $\tau(x'_{\ell'_n - 1})$ or that $\tau(x'_{\ell_n' - 1})$ is a suffix of $\tau(x_{\ell_n - 1})$.
By assumption, this implies that $x_{\ell_n - 1} = x'_{\ell'_n - 1}$.
By repeating the argument, we see that $\ell_n = \ell'_n$ and $C_\tau^\ell(k, x) = C_\tau^\ell(k', x')$, $1 \le \ell \le \ell_n$, $n \ge 0$.
Therefore, as $(\ell_n)_{n \ge 0}$ is increasing, we deduce that $\mathcal{C}_\tau^+(k, x) = \mathcal{C}_\tau^+(k', x')$, $x_0 \not= x'_0$ since $y_0 \not= y'_0$ and $x_{(0, +\infty)} = x'_{(0, +\infty)}$.
This completes the proof.
\end{proof}

Let $(X_\WW, S)$ be a minimal Ferenczi subshift and $\btau_\WW = (\tau_n : \AA_{n+1}^\ast \to \AA_n^\ast)_{n \ge 0}$ be the directive sequence given by \eqref{eq:tau}.
In order to study the asymptotic classes of $(X_\WW, S)$ we need the following definitions.

Define the words $L_n = a_{n-1,1} a_{n-1,2} \ldots a_{n-1,q_{n-1}-1}$ and $R_n = a_{n-1,0}$.
Observe that they satisfy
\[
\tau_n(a)
= L_n a R_n,
\quad a \in \AA_{n+1}, \quad n \ge 1.
\]
Inductively, define $L_{1,1} = L_1$, $R_{1,1} = R_1$, and for $n \ge 1$ we let
\begin{equation}\label{eq:tau1}
L_{1,n+1} = \tau_{[1,n+1)}(L_{n+1}) L_{1,n}
\quad \text{and} \quad
R_{1,n+1} = R_{1,n} \tau_{[1,n+1)}(R_{n+1}).
\end{equation}
Hence, we have
\[
\tau_{[1,n+1)}(a)
= L_{1,n} a R_{1,n}, \quad
a \in \AA_{n+1}, \quad
n \ge 1.
\]

\begin{proposition}\label{p:asymptotic_class}
A minimal Ferenczi subshift has a unique asymptotic class.
\end{proposition}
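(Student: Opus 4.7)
The plan is to exhibit an explicit family of canonical asymptotic pairs, show every asymptotic pair coincides with one of them up to shifting, and observe that these canonical pairs all belong to a single equivalence class of $\sim$.

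\textbf{Canonical pairs.} The words from \eqref{eq:tau1} are nested: $L_{1,n}$ is a suffix of $L_{1,n+1}$ and $R_{1,n}$ is a prefix of $R_{1,n+1}$, and by primitivity $|L_{1,n}|, |R_{1,n}| \to +\infty$. They thus converge to a left-infinite word $L_{1,\infty}$ and a right-infinite word $R_{1,\infty}$. For each $a \in \AA_\WW$, the bi-infinite sequence $z^{(a)} := L_{1,\infty}\,a\,R_{1,\infty}$ lies in $X_{\btau_\WW}^{(1)}$ since every finite factor appears inside some $\tau_{[1,n+1)}(a) = L_{1,n}\, a\, R_{1,n}$; hence $y^{(a)} := \tau_0(z^{(a)}) \in X_\WW$. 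For $a < b$ in $\AA_\WW$, a direct inspection shows that $(y^{(a)}, S^{b-a}y^{(b)})$ is an asymptotic pair in standard form with $y^{(a)}_0 = 0$, $(S^{b-a}y^{(b)})_0 = 1$, and common forward tail arising from $\tau_0(R_{1,\infty})$.

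\textbf{Reduction.} Given any asymptotic pair $(y,y')$, I shift so that $y_0 \neq y'_0$ and $y_{(0,+\infty)} = y'_{(0,+\infty)}$, and swap if necessary so $y_0 = 0$, $y'_0 = 1$. I then apply \Cref{l:asymptotic} iteratively to the morphisms $\tau_n$. Its hypothesis is verified at every level: for $n = 0$ because $\tau_0(a) = 01^a$ begins with $0$ while any proper suffix of $\tau_0(b)$ of length $a+1$ with $b > a$ is $1^{a+1}$; for $n \geq 1$ because $\tau_n$ has constant length, so a suffix relation between distinct images forces equality. The iteration produces pairs $(x^{(n)}, (x^{(n)})') \in X_{\btau_\WW}^{(n)}$ whose centered $\tau_n$-representations have matching positive cutting points.

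\textbf{Identification with canonical pairs.} For $n \geq 1$ the constancy of $|\tau_n|$ combined with $x^{(n+1)}_k = (x^{(n+1)})'_k$ for $k \geq 1$ forces the shifts $k_{n+1}$ and $k_{n+1}'$ to coincide, and the fact that $\tau_n(a) = L_n a R_n$ for distinct $a$ differ only at position $|L_n|$ pins this common shift to $|L_n|$. Consequently $x^{(n+1)}_0 = x^{(n)}_0$ for $n \geq 1$, so the central letters stabilize to distinct $a^*, b^* \in \AA_\WW$. Iterating $x^{(1)} = S^{|L_1|}\tau_1(x^{(2)})$ yields $x^{(1)} = S^{|L_{1,n}|}\tau_{[1,n+1)}(x^{(n+1)})$, and since $\tau_{[1,n+1)}(x^{(n+1)}_0) = L_{1,n}\, a^*\, R_{1,n}$ one reads off that $x^{(1)}$ coincides with $z^{(a^*)}$ on the window $[-|L_{1,n}|, |R_{1,n}|]$. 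Passing to the limit, $x^{(1)} = z^{(a^*)}$ and likewise $(x^{(1)})' = z^{(b^*)}$. Combining with $k_1 = 0$ (from $y_0 = 0$) and $k_1' = b^* - a^*$ (from the matching condition at level $0$ together with $|\tau_0(a)| = a+1$), we obtain $y = y^{(a^*)}$ and $y' = S^{b^*-a^*}y^{(b^*)}$.

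\textbf{Conclusion.} The canonical pairs $(y^{(a)}, S^{b-a}y^{(b)})$ witness $y^{(a)} \sim y^{(b)}$ for every distinct $a,b \in \AA_\WW$, so all the orbits of the points $y^{(c)}$ lie in a single equivalence class of $\sim$. Since every asymptotic pair reduces to this canonical form, this is the unique asymptotic class. The main technical hurdle will be organizing the level-by-level bookkeeping cleanly and rigorously justifying the passage to the limit $x^{(1)} = z^{(a^*)}$ from the finite-window identifications.
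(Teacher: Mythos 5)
Your proof is correct and follows essentially the same route as the paper: both reduce an asymptotic pair to the standard form $z_0\neq z_0'$, $z_{(0,+\infty)}=z_{(0,+\infty)}'$, invoke \Cref{l:asymptotic} (after checking the non-suffix hypothesis coming from $\tau_0(a)=01^a$ and $\tau_n(a)=L_naR_n$), and use the decomposition $\tau_{[1,n+1)}(a)=L_{1,n}\,a\,R_{1,n}$ to force the common forward tail to be the fixed sequence $\lim_n \tau_0(R_{1,n})$. The only organizational difference is that you iterate the lemma level by level and identify the limit points $y^{(a)}$ explicitly, whereas the paper applies the lemma once per level to the composed morphism $\tau_{[0,n+1)}$; the underlying mechanism is identical.
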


\begin{proof}
Let $(X_\WW, S)$ be a minimal Ferenczi subshift generated by the directive sequence $\btau_\WW = (\tau_n : \AA_{n+1}^\ast \to \AA_n^\ast)_{n \ge 0}$ given by \eqref{eq:tau}.
For $n \ge 0$ recall the definition of the subshift $X_{\btau_\WW}^{(n)}$ given in \Cref{ss:s-adic} and that $\AA_1$ is the set of values of the sequence $(a_{n,i} : n \ge 0,\ 0 \le i < q_n)$.

For $n \ge 1$ and $a \not= b$ in $\AA_n$, we have that the word $\tau_{[0,n)}(a)$ is not a suffix of the word $\tau_{[0,n)}(b)$.
Indeed, this is clear if $n = 1$.
If $n \ge 1$, by \eqref{eq:tau1} we have
\begin{equation}\label{eq:images_tau}
\tau_{[0,n+1)}(c)
= \tau_0(L_{1,n}) 0 1^c \tau_0(R_{1,n}), \quad
c \in \AA_{n+1},
\end{equation}
from which the claim follows easily.

Since $(X_\WW, S)$ is minimal and aperiodic, there exists at least one asymptotic class.
Let $z$ and $z'$ be two points in this asymptotic class such that $z_0 \not= z'_0$ and $z_{(0,+\infty)} = z'_{(0,+\infty)}$.
Without lost of generality, we assume that $z_0 = 0$ and $z'_0 = 1$.
By \Cref{l:recognizability}, there exist pairs $(k, y)$ and $(k', y')$ with $y, y' \in X_{\btau_\WW}^{(1)}$, $0 \le k < |\tau_0(y_0)|$, $0 \le k' < |\tau_0(y'_0)|$ and
\[
z = S^k \tau_0(y), \quad
z' = S^{k'} \tau_0(y').
\]

Since $z_0 = 0$ and $\tau_0(c) = 0 1^c$ for $c \in \AA_1$, we deduce that $k = 0$ and from \Cref{l:asymptotic} we obtain $C_{\tau_0}^1(0, y) = C_{\tau_0}^1(k', y')$.
Define $a = y_0$ and $b = y'_0$.
The fact that $z_{(0,+\infty)} = z'_{(0,+\infty)}$ and $z'_0 = 1$ implies that $a < b$.

Now fix $n \ge 0$.
By \Cref{l:recognizability}, there exist pairs $(j, x)$ and $(j', x')$ with $x, x' \in X_{\btau_\WW}^{(n+1)}$, $0 \le j < |\tau_{[0,n+1)}(x_0)|$, $0 \le j' < |\tau_{[0,n+1)}(x'_0)|$ and
\[
z = S^j \tau_{[0,n+1)}(x), \quad
z' = S^{j'} \tau_{[0,n+1)}(x').
\]
From \Cref{l:asymptotic} we have $C_{\tau_{[0,n+1)}}^1(j, x) = C_{\tau_{[0,n+1)}}^1(j', x')$.
Let $s = C_{\tau_{[0,n+1)}}^1(j, x)$, so that $z_{[1, s)} = 1^a \tau_0(y_{[1, m)}) = z'_{[1, s)}$ for some $m \in \NN$.
This, together with \eqref{eq:images_tau}, implies that $x_0 = a$ and $x'_0 = b$.
We conclude that
\[
z_{[1, s)}
= z'_{[1, s)}
= 1^a \tau_0(R_{1,n}).
\]

Since $R_{1,n}$ is a prefix of $R_{1,n+1}$ for each $n \ge 1$ and $(|R_{1,n}|)_{n \ge 1}$ is increasing, there exists a one-sided sequence $u = (u_n)_{n \in \NN}$ in $\{0, 1\}^\NN$ such that
\[
u_{[0, |\tau_0(R_{1,n})|)}
= \tau_0(R_{1,n}), \quad
n \ge 1.
\]
We deduce that $z_{[a+1, +\infty)} = z'_{[a+1, +\infty)} = u$, which does not depend on the points $z$ and $z'$ but only on $\btau_\WW$.
This proves the result.
\end{proof}

For a minimal topological dynamical system $(X, T)$, the existence of a unique asymptotic class implies that the automorphism group $\Aut(X, T)$ is trivial \emph{i.e.},
\[
\Aut(X, T)
= \langle T \rangle.
\]

Indeed, let $x \in X$ be an element in the unique asymptotic class and $\phi$ be an element in $\Aut(X, T)$.
Since the map $\phi$ sends asymptotic classes to asymptotic classes, we deduce that $\phi(x)$ is asymptotic to $T^m x$ for some $m \in \ZZ$.
From \cite[Lemma 2.3]{DDMP16} we have that $\phi = T^m$, and we conclude that $\Aut(X, T) = \langle T \rangle$.

Therefore, \Cref{p:asymptotic_class} implies the following.

\begin{corollary}[{\cite[Theorem 1.2]{GH16b}}]
The automorphism group of a minimal Ferenczi subshift is trivial.
\end{corollary}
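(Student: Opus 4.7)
The plan is to deduce this corollary directly from \Cref{p:asymptotic_class} together with the general principle, already recalled in the paragraph preceding the statement, that in a minimal system with a unique asymptotic class the automorphism group reduces to the powers of the shift. So the work has essentially been done and only needs to be assembled.

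Concretely, I would let $(X_\WW, S)$ be a minimal Ferenczi subshift and pick any $\phi \in \Aut(X_\WW, S)$. Choose two points $z, z' \in X_\WW$ forming a non-trivial asymptotic pair; by \Cref{p:asymptotic_class} every asymptotic pair in $X_\WW$ lies in one and the same asymptotic class. Since $\phi$ commutes with $S$ and is a homeomorphism, the pair $(\phi(z), \phi(z'))$ is again asymptotic (and non-trivial), hence lies in the same unique asymptotic class as $(z, z')$. In particular, there exists $m \in \ZZ$ such that $\phi(z)$ is asymptotic to $S^m z$.

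At this point I would invoke \cite[Lemma 2.3]{DDMP16}, which states that in a minimal subshift any automorphism sending a point to an asymptotic companion must coincide with the corresponding power of the shift. Applying this lemma yields $\phi = S^m$, so $\Aut(X_\WW, S) \subseteq \langle S \rangle$; the reverse inclusion is trivial, which gives $\Aut(X_\WW, S) = \langle S \rangle$. There is no real obstacle here: the only non-routine input is \Cref{p:asymptotic_class}, and the rest is a one-line application of a known lemma, so the proof is a short paragraph.
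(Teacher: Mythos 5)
Your proposal is correct and follows exactly the paper's own argument: it combines \Cref{p:asymptotic_class} with the fact that an automorphism permutes asymptotic classes, so any $\phi \in \Aut(X_\WW, S)$ sends a point of the unique asymptotic class to a point asymptotic to some $S^m z$, and then \cite[Lemma 2.3]{DDMP16} forces $\phi = S^m$. Nothing is missing.
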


\section{Measurable eigenvalues of minimal Ferenczi subshifts}
\label{s:measurable}

In this section we further develop the spectral study of minimal Ferenczi subshifts initiated in \Cref{ss:continuous_eigs} for continuous eigenvalues by analyzing their measurable eigenvalues.

We first give a general necessary condition for a complex number to be a measurable eigenvalue of certain $\SS$-adic subshifts.
This is stated in \Cref{p:veech_criterion}.
Then, we show that, under the hypothesis of exact finite rank, all measurable eigenvalues of minimal Ferenczi subshifts are continuous, thus improving previous known results \cite[Theorem 4.1]{GH16a}.
This is stated in \Cref{c:cor_veech}.

\subsection{The Veech criterion for \SSS-adic subshifts}

We now give a general necessary condition for a complex value to be a measurable eigenvalue with respect to an ergodic invariant probability measure of some $\SS$-adic subshifts.
Such a condition, originally due to W. Veech \cite{Vee84} in the context of interval exchange transformations, was stated as the \emph{Veech criterion} in several articles \cite{Vee84, AF07, AD16} and was crucial in order to obtain generic weak mixing for interval exchange transformations and translation flows in certain Veech surfaces.

For convenience, we state and prove here the necessary condition in the context of $\SS$-adic subshifts following the lines of the original proof of the Veech criterion.
Since we will only consider minimal Cantor systems of finite topological rank, there is no loss in generality \cite{DM08}.
See \cite{DFM19} for a finer analysis of measurable eigenvalues in the more general context of minimal Cantor systems.

Suppose that $\btau = (\tau_n : \AA_{n+1}^\ast \to \AA_n^\ast)_{n \ge 0}$ is a clean directive sequence, $\mu$ be an invariant ergodic probability measure of $(X_{\btau}, S)$ and let $\AA_\mu$ be as given in \eqref{eq:clean_ineq}.

\begin{proposition}\label{p:veech_criterion}
Let $\btau = (\tau_n : \AA_{n+1}^\ast \to \AA_n^\ast)_{n \ge 0}$ be a primitive, proper and recognizable directive sequence and $\mu$ be an ergodic invariant probability measure of $(X_{\btau}, S)$.
Assume that $\btau$ is clean with respect to $\mu$ and let $\AA_\mu$ be the set of letters such that \eqref{eq:clean_ineq} holds.
Suppose that:
\begin{enumerate}[label = (\roman*)]
	\item \label{item:veech1} there exists $K > 0$ such that $|\tau_{[0,n)}| / \langle \tau_{[0,n)} \rangle \le K$ for all large enough $n$; and
	
	\item \label{item:veech2} there exists $\delta > 0$, and, for all large enough $n$, a nonempty word $u_n \in \AA_0^\ast$ and indices $c_n, d_n$ with $0 \le c_n < d_n \le \min_{a \in \AA_\mu} |\tau_{[0,n)}(a)|$ which satisfy $|u_n| \ge \delta \min_{a \in \AA_\mu} |\tau_{[0,n)}(a)|$ and
	\[
	\tau_{[0,n)}(a)_{[c_n, d_n)} = u_n, \quad
	a \in \AA_\mu.
	\]
\end{enumerate}

If $\lambda = \exp(2 \pi i \alpha)$ is a measurable eigenvalue of $(X_{\btau}, S)$ with respect to $\mu$, then
\begin{equation}\label{eq:veech_criterion}
\lim_{n \to +\infty} \inttt{\alpha h_n(a)}
= 0, \quad 
a \in \AA_\mu.
\end{equation}
\end{proposition}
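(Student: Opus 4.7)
The plan is to mimic the classical Veech argument for interval exchange transformations, transplanted to the $\SS$-adic tower setting provided by \Cref{p:nested}. Let $f \in L^2(X_{\btau},\mu)$ be a measurable eigenfunction for $\lambda = \exp(2\pi i\alpha)$; by ergodicity of $\mu$ we may assume $|f| = 1$ almost everywhere. Fix $\varepsilon>0$ and, via Lusin's theorem, choose a compact set $K\subseteq X_{\btau}$ with $\mu(X_{\btau}\setminus K)<\eta$ on which $f$ is uniformly continuous with modulus $\delta>0$. The parameter $\eta$ will be small compared to the cleanness constant $c$ in \eqref{eq:clean_ineq}.

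For each large $n$ I would use the CKR structure $\TT_n$ of \eqref{eq:rep_sadic} to select, for every $a\in\AA_\mu$, a ``good'' subset $G_n(a)\subseteq B_n(a)$ of relative measure at least $\tfrac12$, consisting of points $x$ such that $S^{c_n+j}x\in K$ for some index $j$ within $[|u_n|/4,\,3|u_n|/4)$. This exists by a Fubini argument: the clean bound $\mu(\TT_n(a))\ge c$ together with $\mu(K)>1-\eta$ forces most bases to satisfy the condition. The central comparison step uses the common word $u_n$ from hypothesis \ref{item:veech2}: for $a,b\in\AA_\mu$, $x\in G_n(a)$, $y\in G_n(b)$, and the witness $j$ chosen commonly near $|u_n|/2$, the shifted points $S^{c_n+j}x$ and $S^{c_n+j}y$ have identical symbols on the window $[-j,|u_n|-j)$, so their subshift distance is at most $2^{-|u_n|/4}$. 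Hypotheses \ref{item:veech1} and \ref{item:veech2} combined give $|u_n|\ge(\delta/K)|\tau_{[0,n)}|\to+\infty$, so for $n$ large the distance drops below $\delta$; uniform continuity of $f|_K$ then yields $|f(S^{c_n+j}x)-f(S^{c_n+j}y)|<\varepsilon$, which the identity $f\circ S^{c_n+j}=\lambda^{c_n+j}f$ upgrades to $|f(x)-f(y)|<\varepsilon$.

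To extract condition \eqref{eq:veech_criterion}, I apply this comparison with $y:=S^{h_n(a)}x$. For a positive-measure set of $x\in G_n(a)$, the return point $y$ lies in $B_n(b)\cap G_n(b)$ for some $b\in\AA_\mu$: this requires finding a length-two factor $ab\in\LL^{(n)}(\btau)$ with $a,b\in\AA_\mu$ (produced by primitivity of $\btau$) and then using cleanness plus \ref{item:veech1} to ensure the corresponding cylinder at the base has positive measure and meets $G_n(b)$. For any such $x$, the previous step gives $|f(x)-f(y)|<\varepsilon$, while the eigenvalue relation forces $f(y)=\lambda^{h_n(a)}f(x)$. Since $|f(x)|=1$, this yields $|1-\lambda^{h_n(a)}|<\varepsilon$, hence $\inttt{\alpha h_n(a)}\le C\varepsilon$ for all sufficiently large $n$, with a universal constant $C$. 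Letting $\varepsilon\to 0$ gives the claim.

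The main obstacle is the last step: guaranteeing that the set of $x\in G_n(a)$ whose return $S^{h_n(a)}x$ belongs to $B_n(b)\cap G_n(b)$ for some $b\in\AA_\mu$ is nonempty for all large $n$. This is a density argument combining primitivity of $\btau$, the clean lower bound on $\mu(\TT_n(b))$, and the comparability of heights from \ref{item:veech1}; one may need to contract $\btau$ first so that the incidence matrix between levels $n$ and $n+1$ already contains a positive transition within $\AA_\mu$. A secondary but routine difficulty is the coherent choice of $\eta$, $\delta$, and the threshold $n_0(\varepsilon)$, which is standard Fubini/pigeonhole bookkeeping.
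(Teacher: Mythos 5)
Your blueprint is the same as the paper's: hypothesis \ref{item:veech2} forces the mid-tower levels $S^k B_{n,\mu}$, with $k$ ranging over the middle half of $[c_n,d_n)$ and $B_{n,\mu}=\bigcup_{a\in\AA_\mu}B_n(a)$, to have diameter tending to $0$; Lusin's theorem then makes $f$ almost constant on one such level, and pulling back by $\lambda^{-k}$ makes $f$ almost constant on $B_{n,\mu}$; one concludes by finding, for each $a\in\AA_\mu$, a point $x\in B_n(a)$ at which both $x$ and $S^{h_n(a)}x$ take that almost-constant value. Two steps do not close as written. The smaller one is the common-witness problem: you define $G_n(a)$ by requiring $S^{c_n+j}x\in K$ for \emph{some} $j$, but your comparison of $f(S^{c_n+j}x)$ with $f(S^{c_n+j}y)$ needs the \emph{same} $j$ for $x$ and $y$ simultaneously (and for all pairs at once when you later vary $a$ and $b$). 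The paper avoids this by selecting one level $k_n$ maximizing $\mu(S^{k_n}B_{n,\mu}\cap C)$; averaging over the $\ell_n$ admissible levels together with the lower bound $\mu(A_{n,\mu})\ge \delta c/2$ for their union shows that this single level misses the Lusin set on a subset of measure at most $(\varepsilon^2/4)\mu(B_{n,\mu})$.

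The genuine gap is quantitative and sits exactly where you place your ``main obstacle''. With $G_n(a)$ of relative measure only $\tfrac12$ in $B_n(a)$, the set of $x\in B_n(a)$ whose return $S^{h_n(a)}x$ misses $\bigcup_{b\in\AA_\mu}G_n(b)$ has measure bounded only by $\tfrac12\mu(B_{n,\mu})+\mu(B_n\setminus B_{n,\mu})$, and $\mu(B_{n,\mu})$ can be as large as $(K|\AA_\mu|/c)\,\mu(B_n(a))$; this bound therefore exceeds $\mu(B_n(a))$ and nonemptiness does not follow. What is needed is that each exceptional set have measure at most $\varepsilon\mu(B_{n,\mu})$ with $\varepsilon<c/(3K)$: if the three bad sets covered $B_n(a)$ one would get $\mu_n(a)<3\varepsilon\mu(B_{n,\mu})<(c/K)\mu(B_{n,\mu})$, contradicting $\mu(B_{n,\mu})\le(K/c)\mu_n(a)$, which follows from $\mu(\TT_n(a))\ge c$ and \ref{item:veech1}. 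This threshold, fixed at the outset, is the one quantitative idea your sketch is missing. Finally, your proposed detour through a length-two factor $ab$ with $a,b\in\AA_\mu$, or a contraction making the incidence matrix positive within $\AA_\mu$, is unnecessary: cleanness alone gives $\mu(B_n\setminus B_{n,\mu})/\mu(B_{n,\mu})\le (K/(c|\AA_\mu|))\sum_{b\notin\AA_\mu}\mu(\TT_n(b))\to 0$, which is all the final step requires.
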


\begin{proof}
Let $f : X_{\btau} \to \CC$, $f \not= 0$ be a measurable eigenfunction of $(X_{\btau}, S)$ with respect to $\mu$ with eigenvalue $\lambda$.
We can assume $|f| = 1$ $\mu$-almost everywhere by ergodicity.
Remember the definition of the sets $B_n(a)$ for $a \in \AA_n$ and $B_n$ in \eqref{eq:rep_sadic}.

\medskip

Let $n_0 \in \NN$ and $c > 0$ be such that \eqref{eq:clean_ineq} holds and $0 < \epsilon < \frac{c}{3 K}$.
From now on, we choose $n \ge n_0$ large enough such that \Cref{item:veech1} and \Cref{item:veech2} hold.
We set
\[
B_{n, \mu}
= \bigcup_{a \in \AA_\mu} B_n(a).
\]
Define $t_n = c_n + \lceil (d_n - c_n) / 4 \rceil$, $\ell_n = \lceil (d_n - c_n) / 2 \rceil$ and $A_{n, \mu} = \bigcup_{k=t_n}^{t_n + \ell_n - 1} S^k B_{n, \mu}$.
See \Cref{f:veech_rectangles}.

\begin{figure}[ht]
	\centering
	\includegraphics{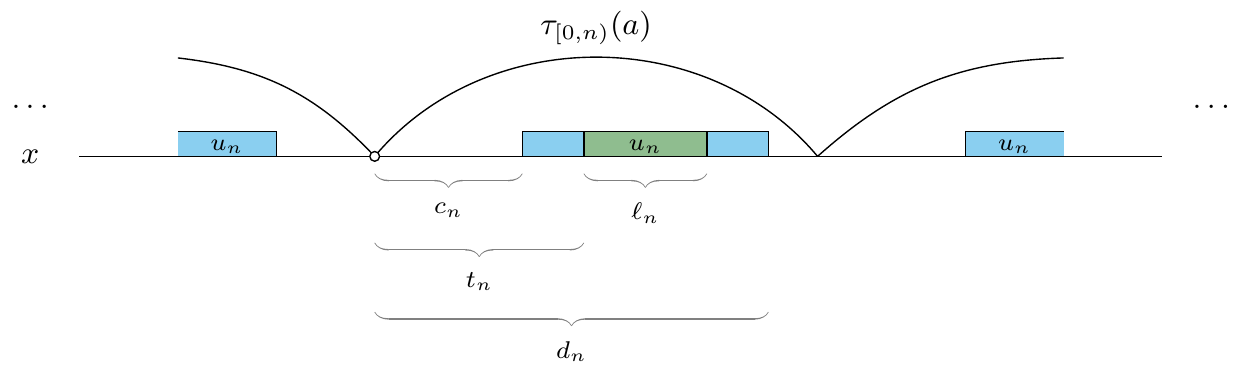}
	\caption{A centered $\tau_{[0,n)}$-representation of a point $x$ in $B_n(a)$, $a \in \AA_\mu$.
	The white point represents the zero coordinate of $x$.
	The green part represents the word ${u_n}_{[t_n, t_n + \ell_n)}$.
	}
	\label{f:veech_rectangles}
\end{figure}

Observe that the union which defines $A_{n, \mu}$ is disjoint and that
\begin{align*}
\mu(A_{n, \mu})
&\ge (d_n - c_n) \mu(B_{n, \mu}) / 2
\ge |u_n| \mu(B_{n, \mu}) / 2\\
&\ge \delta \langle \tau_{[0,n)} \rangle_\mu \mu(B_{n,\mu}) / 2\\
&\ge \delta c / 2.
\end{align*}

\begin{claim}
For all large enough $n$ there exists some value $k_n \in \NN$ and some complex value $w_{k_n} \in \CC$ such that $t_n \le k_n < t_n + \ell_n$ and
\[
\int_{S^{k_n} B_{n,\mu}} |f - w_{k_n}| d\mu
\le \epsilon^2 \mu(B_{n,\mu}).
\]
\end{claim}

Indeed, by Lusin's theorem there exists a compact set $C \subseteq X_{\btau}$ such that $f\vert_{C}$ is uniformly continuous and $\mu(C) \ge 1 - \chi$, where $\chi = \epsilon^2 \delta c / 8$.

Choose $k_n$ such that $\mu(S^{k_n} B_{n,\mu} \cap C) = \max_{t_n \le k < t_n + \ell_n} \mu(S^k B_{n,\mu} \cap C)$.
Then
\[
\dfrac{\mu(S^{k_n} B_{n,\mu} \cap C)}{\mu(B_{n,\mu})}
\ge \dfrac{\sum_{k=t_n}^{t_n + \ell_n - 1}
\mu(S^k B_{n,\mu} \cap C)}{\ell_n \mu(B_{n,\mu})}
= \dfrac{\mu(A_{n,\mu} \cap C)}{\mu(A_{n,\mu})}
\ge \dfrac{\mu(A_{n,\mu}) - \chi}{\mu(A_{n,\mu})},
\]
so that
\[
\mu(S^{k_n} B_{n,\mu} \cap (X_{\btau} \setminus C)) / \mu(B_{n,\mu})
\le \chi / \mu(A_{n,\mu})
\le \epsilon^2 / 4.
\]

By the choice of $t_n$, $\ell_n$ and since $|u_n| \to +\infty$, we have $\diam(S^{k_n} B_{n,\mu}) \to 0$ as $n \to +\infty$.
Hence, for all large enough $n$ we have
\[
\sup_{x, y \in S^{k_n} B_{n,\mu} \cap C} |f(x) - f(y)|
< \epsilon^2 / 2.
\]
Put $w_{k_n} = f(y_n)$ for some point $y_n \in S^{k_n} B_{n,\mu} \cap C$, then
\begin{align*}
\displaystyle\int_{S^{k_n} B_{n,\mu}} |f - w_{k_n}| d\mu
&= \displaystyle\int_{S^{k_n} B_{n,\mu} \cap C} |f - w_{k_n}| d\mu
+ \displaystyle\int_{S^{k_n} B_{n,\mu} \cap (X_{\btau} \setminus C)} |f - w_{k_n}| d\mu\\
&\le \frac{\epsilon^2}{2}\mu(B_{n,\mu}) + 2\mu(S^{k_n} B_{n,\mu} \cap (X_{\btau} \setminus C))\\
&\le \epsilon^2\mu(B_{n,\mu}),
\end{align*}
which proves the claim.

\medskip

Put $w_n' = w_{k_n} \lambda^{-k_n}$.
Since $f$ is an eigenfunction with associated eigenvalue $\lambda$ and $\mu$ is $S$-invariant, we deduce that
\begin{equation}\label{eq:veech}
\displaystyle\int_{B_{n,\mu}} |f - w_n'| d\mu
= \displaystyle\int_{S^{k_n}B_{n,\mu}} |f - w_{k_n}| d\mu
\le \epsilon^2 \mu(B_{n,\mu}).
\end{equation}
The Markov inequality and \eqref{eq:veech} imply
\begin{equation}\label{eq:markov_ineq}
\mu(\{x \in B_{n,\mu} : |f(x) - w_n'| \ge \epsilon\})
\le \epsilon \mu(B_{n,\mu}).
\end{equation}

\begin{claim}
Let $a \in \AA_\mu$.
Then, for all large enough $n$ there exists $x \in B_n(a)$ such that
\[
|f(x)| = 1, \quad
|f(x) - w_n'| < \epsilon \quad
\text{and} \quad
|f(S^{h_n(a)} x) - w_n'| < \epsilon.
\]
\end{claim}

Let $a \in \AA_\mu$.
We begin by observing that
\[
\dfrac{\mu(B_n \setminus B_{n,\mu})}{\mu(B_{n,\mu})}
= \dfrac{\sum_{b \in \AA \setminus \AA_\mu} \mu(\TT_n(b)) / |\tau_{[0,n)}(b)|}{\sum_{a \in \AA_\mu} \mu(\TT_n(a)) / |\tau_{[0,n)}(a)|}
\le \Big(\frac{K}{c |\AA_\mu|}\Big) \sum_{b \in \AA \setminus \AA_\mu} \mu(\TT_n(b)).
\]
Hence, by \eqref{eq:clean_ineq} for all large enough $n$ we obtain
\begin{equation}\label{eq:aux_ineq}
\mu(B_n \setminus B_{n,\mu})
< \epsilon \mu(B_{n,\mu}).
\end{equation}
Let $n$ be large enough so that \eqref{eq:markov_ineq} and \eqref{eq:aux_ineq} hold.
If the claim is not true for such $n$, after neglecting a set of measure zero we have

\begin{align*}
B_n(a)
\subseteq \{x \in B_{n,\mu} : |f(x) - w_n'| \ge \epsilon\} &\cup S^{-h_n(a)} \{x \in B_{n,\mu} : |f(x) - w_n'| \ge \epsilon\}\\
&\cup S^{-h_n(a)} \{x \in B_n \setminus B_{n,\mu} : |f(x) - w_n'| \ge \epsilon\},
\end{align*}
and then
\begin{equation}\label{eq:markov}
\mu_n(a)
< 3 \epsilon \mu(B_{n,\mu})
< \frac{c}{K} \mu(B_{n,\mu}).
\end{equation}

But by \eqref{eq:markov} we obtain the following contradiction
\begin{align*}
\mu(B_{n,\mu})
&= \sum_{b \in \AA_\mu} \mu_n(b)
= \sum_{b \in \AA_\mu} \mu_n(b)
\dfrac{|\tau_{[0,n)}(b)| |\tau_{[0,n)}(a)|}
{|\tau_{[0,n)}(b)| |\tau_{[0,n)}(a)|}\\
&\le \frac{K}{|\tau_{[0,n)}(a)|}
\sum_{b \in \AA_\mu} \mu(\TT_n(b))
\le \frac{K}{c} \mu_n(a) < \mu(B_{n,\mu}),
\end{align*}
where we used $\mu(\TT_n(a)) = \mu_n(a) |\tau_{[0,n)}(a)| \ge c$.
This proves the claim.

\medskip

Finally, we obtain $|\lambda^{h_n(a)} - 1| = |f(S^{h_n(a)} x) - f(x)| < 2 \epsilon$ and $\inttt{\alpha h_n(a)} \to 0$ as $n \to +\infty$ since $\epsilon$ can be chosen to be arbitrarily small.
\end{proof}

\begin{remark}
\noindent
\begin{enumerate}
    \item \Cref{item:veech2} in \Cref{p:veech_criterion} holds in particular if for each $n \ge 0$ and $a \in \AA$ there exists a prefix $p_n$ (or suffix $s_n$) of $\tau_{[0,n)}(a)$ and $\delta > 0$ such that the length $|p_n|$ (or $|s_n|$) is at least $\delta \langle \tau_{[0,n)} \rangle$.
    \item In \cite[Example 2]{DFM15} the authors describe an $\SS$-adic subshift of Toeplitz type and of exact finite rank such that $\exp(2 \pi i / 6)$ is a measurable and noncontinuous eigenvalue for the unique invariant probability measure.
    The associated height vectors $(h_n)_{n \ge 0}$ satisfy
    \[
    h_n(a)
    \equiv 1 \pmod{6}, \quad
    a \in \AA, \quad
    n \ge 0.
    \]
    A simple computation shows that \Cref{item:veech2} in \Cref{p:veech_criterion} does not hold.
    Therefore, the condition given by \eqref{eq:veech_criterion} is not always necessary.
\end{enumerate}
\end{remark}

\subsection{Veech criterion applied to Ferenczi subshifts}

We will apply \Cref{p:veech_criterion} to the study of measurable eigenvalues of minimal Ferenczi subshifts.
We first need the following lemma to fulfill \Cref{item:veech2}.

\begin{lemma}\label{l:one_third}
Let $(X_\WW, S)$ be a minimal Ferenczi subshift and $\btau_\WW = (\tau_n : \AA_{n+1}^\ast \to \AA_n^\ast)_{n \ge 0}$ be the directive sequence given in \eqref{eq:tau}.
Then, for all $n \ge 1$ there exists a common prefix $p_n$ of the words $\tau_{[0,n)}(a)$,  $a \in \AA_n$, satisfying
\[
|p_n|
\ge \frac{\min_{1\le i \le \ell} a_i + 1}{3(\max_{1\le i \le \ell} a_i + 1)} \langle \tau_{[0,n)}\rangle.
\]
\end{lemma}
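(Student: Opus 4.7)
My plan is to exhibit an explicit common prefix $p_n$ of length at least $(m+1) Q_{0, n-1}/3$, where $m = \min_{1\le i \le \ell} a_i$ and $M = \max_{1\le i \le \ell} a_i$, and then compare this with the obvious upper bound $\langle \tau_{[0,n)} \rangle \le (M+1) Q_{0, n-1}$ to obtain the required inequality.

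For $n = 1$ this is immediate: $p_1 := 0 \, 1^m$ is a common prefix of all $\tau_0(a) = 0 \, 1^a$, $a \in \AA_1$, with $|p_1| = m + 1 = \langle \tau_0 \rangle$, and the inequality $|p_1| \ge \frac{m+1}{3(M+1)} \langle \tau_0 \rangle$ follows trivially. For $n \ge 2$, the decisive observation is that for every $a \in \AA_n$,
\[
\tau_{n-1}(a)
= a_{n-2,1} \, a_{n-2,2} \cdots a_{n-2, q_{n-2}-1} \, a \, a_{n-2,0},
\]
so the first $q_{n-2}-1$ letters of $\tau_{n-1}(a)$ are independent of $a$. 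Applying the constant-length morphism $\tau_{[1, n-1)}$, which by \eqref{eq:constant_length} has length $Q_{0, n-2}$, I obtain that
\[
c_n := \tau_{[1, n-1)}(a_{n-2,1} \, a_{n-2,2} \cdots a_{n-2, q_{n-2}-1})
\]
is a common prefix of $\tau_{[1,n)}(a) = \tau_{[1, n-1)}(\tau_{n-1}(a))$ for $a \in \AA_n$, of length $(q_{n-2}-1) Q_{0, n-2}$. I then set $p_n := \tau_0(c_n)$, which is a common prefix of $\tau_{[0,n)}(a) = \tau_0(\tau_{[1,n)}(a))$ for $a \in \AA_n$.

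To finish, I lower-bound $|p_n|$: every letter of $c_n$ lies in $\AA_1$ and is thus at least $m$, so $|p_n| = \sum_i (1 + c_{n, i}) \ge (m+1) |c_n|$. Since $\WW$ is standard, $q_{n-2} \ge 2$, which yields the elementary inequality $(q_{n-2}-1)/(q_{n-2}+1) \ge 1/3$ and hence $|c_n| \ge Q_{0, n-1}/3$. Combined with the bound $\langle \tau_{[0, n)} \rangle \le (M+1) Q_{0, n-1}$ (which holds because every letter of $\tau_{[1,n)}(a)$ lies in $\AA_1$ and $\tau_{[1,n)}$ has constant length $Q_{0, n-1}$), this gives $|p_n| \ge (m+1) Q_{0, n-1}/3 \ge \frac{m+1}{3(M+1)} \langle \tau_{[0, n)} \rangle$, as desired. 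The computation is entirely elementary; the only thing worth noting is that the factor $1/3$ in the statement corresponds precisely to the gain $(q-1)/(q+1) \ge 1/3$ for $q \ge 2$ provided by the standardness hypothesis, so this is exactly where that assumption gets used.
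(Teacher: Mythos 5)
Your proof is correct, and it rests on the same core mechanism as the paper's: exhibit an explicit common prefix coming from the fact that the images of $\tau_{n-1}$ agree on their first $q_{n-2}-1$ letters, then use standardness via $(q-1)/(q+1)\ge 1/3$. The difference is in which prefix you take. The paper uses the full inductively defined prefix $L_{1,n-1}$ from \eqref{eq:tau1}, which aggregates the common initial segments from \emph{all} levels $1,\dots,n-1$ and requires a short induction to establish $|L_{1,n-1}|\ge |\tau_{[1,n)}|/3$; you use only the top-level contribution $\tau_{[1,n-1)}(L_{n-1})$, which is a (shorter) prefix of $L_{1,n-1}$ but already has length $(q_{n-2}-1)Q_{0,n-2}\ge Q_{0,n-1}/3$ by a one-line computation, with no induction. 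Your route is a bit more economical; the paper's pays for the induction by producing a prefix that asymptotically covers almost all of $\tau_{[0,n)}(a)$ and reuses the objects $L_{1,n}$, $R_{1,n}$ that appear elsewhere (e.g.\ in the analysis of asymptotic classes). Either way the constant $\frac{\min a_i+1}{3(\max a_i+1)}$ comes out the same, and all your intermediate claims (the length $|\tau_{[1,n-1)}|=Q_{0,n-2}$, the bound $|\tau_0(w)|\ge(m+1)|w|$, and $\langle\tau_{[0,n)}\rangle\le(M+1)Q_{0,n-1}$) check out.
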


\begin{proof}
Let $m = \min_{1 \le i \le \ell} a_i + 1$ and $M = \max_{1 \le i \le \ell} a_i + 1$.
Define $p_1 = 0 1^{m-1}$ and $p_n = \tau_0(L_{1,n-1})$, $n \ge 2$ as in \eqref{eq:tau1}.
By definition, $p_n$ is a prefix of $\tau_{[0,n)}(a)$ for each $a \in \AA_n$, $n \ge 1$.
Since each morphism $\tau_n$, $n \ge 1$ is of constant length, we have $|L_{1,n+1}| = |L_{1,n}| + |L_{n+1}| |\tau_{[1,n+1)}|$, $n \ge 1$.
Observe that $|L_n| = q_{n-1} - 1$ for $n \ge 1$, so that $|L_{1,1}| = q_0 - 1 \ge \frac{q_0 + 1}{3} = \frac{|\tau_{[1,2)}|}{3}$.

Inductively, if $|L_{1,n}| \ge \frac{|\tau_{[1,n+1)}|}{3}$, we obtain
\begin{align*}
|L_{1,n}| + |L_{n+1}| |\tau_{[1,n+1)}|
&\ge \frac{|\tau_{[1,n+1)}|}{3} + (q_n -1) |\tau_{[1,n+1)}|
\ge \frac{|\tau_{[1,n+1)}|}{3} (q_n + 1)\\
&= \frac{|\tau_{[1,n+2)}|}{3},
\end{align*}
where we used $q_n - 1 \ge \frac{q_n}{3}$.
This shows that $|L_{1,n+1}| \ge \frac{|\tau_{[1,n+2)}|}{3}$, $n \ge 0$.

Finally, we deduce $|p_1| = m \ge \frac{m}{3 M} \langle \tau_0 \rangle $ and
\[
|p_n|
\ge m |L_{1,n-1}|
\ge m \frac{|\tau_{[1,n)}|}{3}
\ge \frac{m}{3 M} \langle \tau_{[0,n)} \rangle, \quad n \ge 2.
\]
\end{proof}

For a minimal Ferenczi subshift $(X_\WW, S)$ with unique invariant probability measure $\mu$, we set $d_{\WW_\mu} = |\AA_\mu|$, where $\AA_\mu$ is defined as in \Cref{ss:clean}.
A direct application of \Cref{l:computation_heights}, \Cref{l:one_third} and \Cref{p:veech_criterion} allows us to obtain the following.

\begin{corollary}\label{c:cor_veech}
Let $(X_\WW, S)$ be a minimal Ferenczi subshift and $\mu$ be the unique invariant probability measure.
\begin{enumerate}
    \item If $d_{\WW_\mu} = d_\WW$ (\emph{i.e.}, if $\tau_\WW$ is of exact finite rank) then all measurable eigenvalues of $(X_\WW, S)$ with respect to $\mu$ are continuous.
    \item If $d_{\WW_\mu} \ge 2$, then the system $(X_\WW, S)$ has no irrational measurable eigenvalues with respect to $\mu$.
\end{enumerate}
\end{corollary}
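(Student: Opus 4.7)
The plan is to apply the Veech criterion (\Cref{p:veech_criterion}) to $\btau_\WW$. Since $(X_\WW, S)$ is uniquely ergodic (\Cref{p:uniq_ergodic}) and the alphabets $(\AA_n)_{n \ge 0}$ eventually stabilize to $\AA_\WW$, the sequence $\btau_\WW$ is clean with respect to $\mu$ for $n_0$ large enough, with associated subalphabet $\AA_\mu \subseteq \AA_\WW$ of cardinality $d_{\WW_\mu}$.

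The two hypotheses of \Cref{p:veech_criterion} are immediate. For \Cref{item:veech1}, the identity $h_n(a) = |\tau_{[0,n)}(a)|$ together with \eqref{eq:proportional_heights} of \Cref{l:computation_heights} yields $|\tau_{[0,n)}|/\langle \tau_{[0,n)}\rangle \le K$. For \Cref{item:veech2}, \Cref{l:one_third} supplies a common prefix $p_n$ of $\tau_{[0,n)}(a)$ valid for every $a \in \AA_n \supseteq \AA_\mu$, of length at least $\delta \langle \tau_{[0,n)}\rangle$ for some fixed $\delta > 0$; setting $c_n = 0$, $d_n = |p_n|$, $u_n = p_n$ does the job. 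Applying the criterion, for any measurable eigenvalue $\lambda = \exp(2\pi i \alpha)$ of $(X_\WW, S)$ with respect to $\mu$ I obtain
\[
\lim_{n \to +\infty} \inttt{\alpha h_n(a)} = 0, \quad a \in \AA_\mu.
\]

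For part (2), since $d_{\WW_\mu} \ge 2$ I would pick two distinct letters $a, b \in \AA_\mu$. By \eqref{eq:height_vectors} the difference $h_n(a) - h_n(b) = a - b$ is a nonzero integer independent of $n$, so the previous limit forces $\inttt{\alpha(a - b)} = 0$, hence $\alpha(a - b) \in \ZZ$, which rules out irrational $\alpha$. For part (1), exact finite rank gives $\AA_\mu = \AA_\WW$, so part (2) already excludes irrational measurable eigenvalues. For a rational $\alpha = p/q$ with $\gcd(p,q) = 1$, the Veech limit $\inttt{(p/q) h_n(a)} \to 0$ for every $a \in \AA_\WW$ takes values in the discrete set $\{0\} \cup [1/q, 1/2]$, so it is eventually $0$. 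Therefore $q$ divides $h_n(a) = |w_{n-1}| + a$ for every $a \in \AA_\WW$ and every large $n$. Since the letters $a_{m,i}$ with $m$ large lie in the stabilized alphabet $\AA_\WW$, this yields the divisibility condition of \Cref{p:rational_cont_eigs}, so $\lambda$ is a continuous eigenvalue.

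The main piece of bookkeeping is aligning the Veech conclusion with the criterion of \Cref{p:rational_cont_eigs}: the former gives divisibility of $|w_{n-1}| + a$ by $q$ for each $a \in \AA_\WW$ and each large $n$, while the latter demands divisibility of $|w_n| + a_{m,i}$ by $q$ for a single $n$ and all $m \ge n$. Matching these requires choosing $n$ large enough that both the Veech conclusion applies at level $n+1$ and all $a_{m,i}$ with $m \ge n$ lie in $\AA_\WW$, which is possible thanks to the stabilization of $(\AA_n)$.
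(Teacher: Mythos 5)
Your proposal is correct and follows essentially the same route as the paper: verify the hypotheses of \Cref{p:veech_criterion} via \Cref{l:one_third} and \Cref{l:computation_heights}, deduce $\inttt{\alpha h_n(a)} \to 0$ for $a \in \AA_\mu$, use $h_n(a) - h_n(b) = a - b$ to rule out irrational $\alpha$ when $|\AA_\mu| \ge 2$, and in the exact finite rank case upgrade the limit to eventual divisibility of all heights by $q$, matching the criterion of \Cref{p:rational_cont_eigs}. Your bookkeeping (taking $a,b \in \AA_\mu$ rather than $\AA_\WW$, and aligning the index $n$ with the stabilization of the alphabets) is slightly more careful than the paper's terse write-up, but the argument is the same.
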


\begin{proof}
Let $\lambda = \exp(2 \pi i \alpha)$ be a measurable eigenvalue of $(X_\WW, S)$ with respect to $\mu$ and $\btau_\WW$ be the directive sequence given by \eqref{eq:tau}.
We see that, by \Cref{l:one_third} and \Cref{l:computation_heights}, all hypothesis of \Cref{p:veech_criterion} are verified.
\begin{enumerate}
    \item If $d_{\WW_\mu} = d_\WW$, we have
    \[
    \inttt{\alpha h_n} \to 0
    \]
    as $n \to +\infty$.
    As in the proof of \Cref{p:rational_cont_eigs}, we have that $\alpha$ must be rational.
    Moreover, if $\alpha = p / q$ is rational, then $q$ must divide all coordinates of $h_n$ for all large enough $n$.
    We conclude that $\lambda$ is a continuous eigenvalue.
    \item If $d_{\WW_\mu} \ge 2$, there exist two distinct elements $a, b$ in $\AA_\WW$ which satisfy
    \[
    \inttt{\alpha(a - b)} = 0.
    \]
    In particular, $\alpha$ must be rational.
\end{enumerate}
\end{proof}

Thus, we showed that measurable and continuous eigenvalues coincide in the case where $d_{\WW_\mu} = d_\WW$.
However, when $d_{\WW_\mu} \not= d_\WW$, we can obtain different behaviors that we comment in the section below.

\subsection{Various examples exhibiting different spectral behaviors}

In this section we will precise the situation where $d_{\WW_\mu} \not= d_\WW$.
In this case, we give explicit examples of minimal Ferenczi subshifts $(X_\WW, S)$ having a prescribed topological rank $d_\WW$, a prescribed quantity $d_{\WW_\mu}$ with $d_{\WW_\mu} \not= d_\WW$, with no nontrivial continuous eigenvalue, but with any rational measurable eigenvalue $\lambda = \exp(2 \pi i /p)$.

However, when $d_{\WW_\mu} = 1$, we were not able to show that there are no irrational measurable eigenvalues.
We leave this as an open question.

\subsubsection{A realization result on measurable eigenvalues with rank constraints}

\begin{proposition}
Let $p$ be a prime number and $d, d'$ be such that $1 \le d' < d$.
Then, there exists a minimal Ferenczi subshift $(X_\WW, S)$ with unique invariant probability measure $\mu$ such that $\rank(X_\WW, S) = d$, $d_{\WW_\mu} = d'$, the system $(X_\WW, S)$ is topologically weakly-mixing and $\lambda = \exp(2 \pi i / p)$ is a measurable eigenvalue of $(X_\WW, S)$.
\end{proposition}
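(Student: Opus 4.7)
The plan is to build an explicit Ferenczi subshift with prescribed parameters and verify the four desired properties using results from the previous sections. The key structural choices lie in the alphabet $\AA_\WW$, which must encode via residues modulo $p$ the distinction between ``heavy'' letters that will carry positive measure in the CKR towers and ``light'' letters crushed to measure zero, and in the arrangement of spacers at each level, which must allow a cumulative-height congruence to hold for an explicit construction of a measurable eigenfunction.

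Concretely, I would take $\AA_\mu$ to consist of $d'$ distinct positive integers all $\equiv -1 \pmod{p}$ and $\AA_\WW \setminus \AA_\mu$ to consist of $d - d'$ distinct positive integers containing at least one value not $\equiv -1 \pmod{p}$, chosen so that $\gcd\{a - a' : a, a' \in \AA_\WW\} = 1$ (for instance $\AA_\mu = \{jp - 1 : 1 \le j \le d'\}$ and $\AA_\WW \setminus \AA_\mu = \{1 + jp : 0 \le j \le d - d' - 1\}$, with a small ad hoc adjustment when $p = 2$). For the spacer parameters I would pick sequences $q_n \to \infty$ and $r_n \to \infty$ with $p \mid r_n$, $r_n < q_n$ and $\sum_n r_n/q_n < \infty$ (say $q_n = pn^3$ and $r_n = pn$). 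At each level $n$, set $a_{n,0} \in \AA_\mu$, fill $a_{n,1}, \ldots, a_{n,r_n}$ cyclically with the letters of $\AA_\WW \setminus \AA_\mu$, and fill the remaining positions cyclically with letters of $\AA_\mu$. A direct induction using \eqref{eq:lengths} together with $p \mid r_n$ yields $|w_n| \equiv 1 \pmod{p}$ for every $n$, hence $p \mid h_n(a) = a + |w_{n-1}|$ for every $a \in \AA_\mu$ and $n$ large. With this construction, $\rank(X_\WW, S) = d_\WW = d$ by \Cref{c:rank}; topological weak mixing is immediate from \Cref{c:top_wm} and the gcd condition; and $d_{\WW_\mu} = d'$ is checked via \Cref{p:exact_rank}, since $f_k(a)/q_k$ remains bounded below for $a \in \AA_\mu$, whereas for $a \in \AA_\WW \setminus \AA_\mu$ the ratio $f_k(a)/q_k$ is $O(r_k/q_k) \to 0$ fast enough to make the $\liminf$ in \eqref{eq:exact_rank_tau} vanish.

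The delicate step is exhibiting $\lambda = \exp(2\pi i/p)$ as a measurable eigenvalue. Let $(\TT_n)_{n \ge 0}$ be the CKR partitions of \Cref{p:nested} and set $G_n = \bigcup_{a \in \AA_\mu} \TT_n(a)$. Define $f_n : X_\WW \to \CC$ by $f_n(S^k B_n(a)) = \exp(2\pi i k/p)$ for $a \in \AA_\mu$ and $0 \le k < h_n(a)$, and $f_n = 0$ elsewhere. Since $p \mid h_n(a)$ for $a \in \AA_\mu$, the identity $f_n \circ S = \lambda f_n$ holds everywhere on $G_n$. The main technical point is to verify $f_n = f_{n+1}$ on $G_n \cap G_{n+1}$; writing $\tau_n(a') = c_0 c_1 \ldots c_{q_{n-1}}$ for $a' \in \AA_\mu$, this reduces to showing $\sum_{j' < j} h_n(c_{j'}) \equiv 0 \pmod{p}$ at every position $j$ for which $c_j \in \AA_\mu$. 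This is precisely what the block arrangement of $\AA_\WW \setminus \AA_\mu$-spacers together with $p \mid r_n$ ensure: the ``bad'' letters occupy exactly the initial positions $0, \ldots, r_n - 1$ of $\tau_n(a')$ and each contributes a common nonzero residue mod $p$, so the cumulative height resets to $0 \pmod{p}$ at the first good position and remains there. The summability $\sum_n \mu(X_\WW \setminus G_n) < \infty$ then makes Borel--Cantelli applicable, so almost every $x$ lies in $G_n$ for all large $n$, $f(x) := \lim_n f_n(x)$ exists almost everywhere, has unit modulus, and satisfies $f \circ S = \lambda f$ almost everywhere. This produces the desired measurable eigenvalue, which by \Cref{p:rational_cont_eigs} and the gcd condition is not continuous. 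The combinatorial bookkeeping of the cumulative-height congruence is the principal difficulty.
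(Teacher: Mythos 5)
Your proposal is correct and follows essentially the same strategy as the paper's own proof: heavy letters chosen $\equiv -1 \pmod{p}$ so that $p$ divides their tower heights $h_n(a)=a+|w_{n-1}|$, light letters occurring with summable relative frequency $r_n/q_n$ so that their towers lose all measure, tower-defined approximate eigenfunctions glued along a cumulative-height congruence, and Borel--Cantelli; the paper merely places the heavy block first (so the congruence is automatic there) where you place the light block first and reset it using $p\mid r_n$. The one point to repair is that your displayed letter choice does not always give $\gcd\{a-a' : a,a'\in\AA_\WW\}=1$ (e.g.\ $d=2$, $d'=1$, $p=5$ leaves the single difference $3$), so the appeal to the gcd criterion for weak mixing needs the adjustment you anticipate --- the paper secures it by imposing $a_d-a_{d'}=1$ --- after which your reset argument still works provided the light letters share a common residue mod $p$ or $r_n$ is taken to be a multiple of $p(d-d')$.
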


\begin{proof}
Consider any set of nonnegative numbers $\AA = \{a_i : 1 \le i \le d\}$ such that $p$ divides $a_i + 1$, $1 \le i \le d'$ and $a_d - a_{d'} = 1$.
Put $\AA_0 = \{0, 1\}$ and $\AA_n = \AA$ if $n \ge 1$.
Let $v = a_{j^\ast}$ for some $d' < j^\ast \le d$ and define the words
\begin{align*}
    U
    &= a_1 a_2 \ldots a_{d'}\\
    W
    &= a_{d' + 1} \ldots a_{j^\ast - 1} a_{j^\ast + 1} \ldots a_d
\end{align*}
in $\AA^\ast$.
Consider any increasing function $g : \NN \to \ZZ_{> 0}$ such that $\sum_{n=0}^\infty \frac{1}{g(n)} < +\infty$.
Let $\btau_\WW = (\tau_n : \AA_{n+1}^\ast \to \AA_n^\ast)_{n \ge 0}$ be the directive sequence given by
\begin{equation}\label{eq:morphisms_3}
    \tau_n(a)
    = U^{p g(n)} W^p v^{p-1} a v, \quad
    a \in \AA, \quad
    n \ge 1.
\end{equation}

\Cref{t:recognizable_sequence} implies that $\btau_\WW$ defines a minimal Ferenczi subshift $(X_\WW, S)$.
Moreover, \Cref{c:rank} implies that $\rank(X_\WW, S) = d$.
Let $(\TT_n)_{n \ge 0}$ be the nested sequence of CKR partitions of $X_\WW$ given by \eqref{eq:rep_sadic} and $\mu$ be the unique invariant probability measure of $(X_\WW, S)$.

Observe that the vector $(f_n(a))_{a \in \AA}$ associated with the morphism $\tau_n$, as defined by \eqref{eq:f_n}, is given by  
\[
f_n(a_i)
= \begin{cases}
pg(n) &\text{if \quad $1 \le i \le d'$}\\
p     &\text{if \quad $d' < i \le d$}.
\end{cases}
\]
From \eqref{eq:composition_matrices} the composition matrix of the morphism $\tau_n$ is given by
\[
M_{\tau_n}
= I + f_n \cdot \uu,
\]
where $I$ is the identity matrix indexed by $\AA$ and $\uu$ is the row vector of ones in $\RR^\AA$.

\begin{claim}
The system $(X_\WW, S)$ is topologically weakly-mixing and $d_{\WW_\mu} = d'$.
\end{claim}

\begin{proof}
Since $a_d - a_{d'} = 1$, we have that the system  $(X_\WW, S)$ is topologically weakly mixing.
 
It remains to show that $\AA_\mu = \{a_i : 1 \le i \le d'\}$.
From \eqref{eq:proportional_heights}, there exists a constant $K$ such that for all $a, b \in \AA$ and $n \ge 0$ we have
\begin{align*}
h_{n+1}(b)
&= \sum_{c \in \AA} h_n(c) M_{\tau_n}(c, b)
\le K h_n(a) \sum_{c \in \AA} M_{\tau_n}(c, b)
\le K |\AA| (p g(n) + 1) h_n(a),
\end{align*}
and 
\begin{align*}
h_{n+1}(b)
&= \sum_{c \in \AA} h_n(c) M_{\tau_n}(c, b)
\ge K^{-1} h_n(a) \sum_{c \in \AA} M_{\tau_n}(c, b)
\ge K^{-1} |\AA| p g(n) h_n(a).
\end{align*}
Let $i \in \{1, \ldots,  d'\}$, $j \in \{ d'+1, \ldots, d\}$, $b \in \AA$ and $n \ge 0$.
From above, we obtain
\begin{align*}
\mu(\TT_n(a_i))
&= h_n(a_i) \mu_n(a_i)
= h_n(a_i) \sum_{b \in \AA} M_{\tau_n}(a_i, b) \mu_{n+1}(b)\\
&\ge p g(n) h_n(a_i) \sum_{b \in \AA} \frac{\mu(\TT_{n+1}(b))}{h_{n+1}(b)} \ge p g(n) \min_{b \in \AA} \frac{h_n(a_i)}{h_{n+1}(b)}\\
&\ge \frac{p g(n)}{K |\AA| (p g(n) + 1)}
\ge \frac{1}{2 K |\AA|}
\end{align*}
and
\begin{align*}
\mu(\TT_n(a_j))
&= h_n(a_j) \mu_n(a_j)
= h_n(a_j) \sum_{b \in \AA} M_{\tau_n}(a_j, b) \mu_{n+1}(b)\\
&\le (p+1) h_n(a_j) \sum_{b \in \AA} \frac{\mu(\TT_{n+1}(b))}{h_{n+1}(b)}\le (p+1) \max_{b \in \AA} \frac{h_n(a_j)}{h_{n+1}(b)}\\
&\le \frac{K(p+1)}{|\AA| p g(n)}
\to 0
\end{align*}
as $n \to +\infty$.
This shows that $\AA_\mu = \{a_i : 1 \le i \le d'\}$ and proves the claim.
\end{proof}

\medskip

Let $\lambda = \exp(2 \pi i / p)$.
We will define a measurable eigenfunction $f$ as a limit of the sequence $(f_n)_{n \ge 0}$ defined by
\[
f_n(x) = \exp(2 \pi i j / p) \quad
\text{if} \quad
x \in S^j B_n(a), \quad
0 \le j < h_n(a), \quad
a \in \AA.
\]
Define
\[
A_n
= \{x \in X_\WW : f_{n+1}(x) \not= f_n(x)\}, \quad n \ge 0
\]
and
\[
t_n
= |\tau_{[0,n)}(U^{p g(n+1)})|, \quad
n \ge 0.
\]
Observe, from \eqref{eq:composition_matrices}, that $p$ divides $h_n(a)$ for all $a \in \AA_\mu$.
Moreover, $f_{n+1}(x) = f_n(x)$ whenever $x$ belongs to
\[
\bigcup_{1 \le i \le d'} \bigcup_{0 \le j < t_n} S^j B_{n+1}(a_i).
\]
Consequently, we deduce
\[
A_n
\subseteq \left( \bigcup_{1 \le i \le d'} \bigcup_{t_n \le j < h_{n+1}(a_i)} S^j B_{n+1}(a_i) \right) \cup \left( \bigcup_{d' < j \le d} \TT_{n+1}(a_j) \right).
\]

Let $1 \le i \le d'$.
From \eqref{eq:proportional_heights}, we have
\[
h_{n+1}(a_i) - t_n
= h_n(a_i) + p \sum_{d' < j \le d} h_n(a_j)
\le p K |\AA| h_n(a_i).
\]

From the previous computations, we obtain
\begin{align*}
\mu(A_n)
&\le p K |\AA| \sum_{1 \le i \le d'} \mu_{n+1}(a_i) h_n(a_i)
+ \sum_{d' < j \le d} \mu(\TT_{n+1}(a_j))\\
&\le p K |\AA| \sum_{1 \le i \le d'} \frac{h_n(a_i)}{h_{n+1}(a_i)} \mu(\TT_{n+1}(a_i))
+ \frac{K(p+1)}{p g(n+1)}\\
&\le \frac{K^2}{g(n)}
+ \frac{K(p+1)}{p g(n+1)} \le \frac{2 K^2 (p+1)}{g(n)}
\end{align*}
and consequently $\sum \mu(A_n)$ converges.
The Borel--Cantelli lemma implies that $\mu(\limsup_{n \to +\infty} A_n) = 0$.
Hence, the sequence $(f_n)_{n \ge 0}$ converges $\mu$-almost everywhere to some function $f$.

Moreover, if $x$ is not in $\bigcup_{a \in \AA} S^{h_n(a) - 1} B_n(a)$, then $f_n(S x) = \lambda f_n(x)$.
Since $\mu(\bigcup_{a \in \AA} S^{h_n(a) - 1} B_n(a)) \to 0$ as $n \to +\infty$, we conclude that $f$ is a measurable eigenfunction with eigenvalue $\lambda$ of $(X_\WW, S)$ with respect to $\mu$.
\end{proof}

\subsubsection{An example with \texorpdfstring{$d_{\WW_\mu} = 1$}{} with no noncontinuous rational eigenvalue}

We now present an example in the situation where $d_{\WW_\mu} = 1$ and every measurable rational eigenvalue is continuous.
For this purpose, we will use the following useful result of \cite[Section 4]{DFM19}.
Note that we have adapted this result to fit the context of $\SS$-adic subshifts.

\begin{lemma}[{\cite[Corollary 16]{DFM19}}]\label{l:cor16}
Let $\btau = (\tau_n : \AA_{n+1}^\ast \to \AA_n^\ast)_{n \ge 0}$ be a proper, primitive and recognizable directive sequence and let $\mu$ be an ergodic invariant probability measure of $(X_{\btau}, S)$.
Assume that $\btau$ is clean with respect to $\mu$ and let $\AA_\mu$ be the set of letters such that \eqref{eq:clean_ineq} holds.

Let $\lambda$ be a complex number of modulus $1$.
If for all $a, b \in \AA_\mu$,
\begin{equation}\label{eq:cor16}
\dfrac{\Big | \sum_{w \in W_{m, n}(a, b)} \lambda^{\langle \ell(w), h_m \rangle} \Big |}
{|\tau_{[m, n)}(b)|_a}
\to 1 \quad
\text{as} \quad
m \to +\infty \quad
\text{uniformly for} \quad
n > m,
\end{equation}
then $\lambda$ is an eigenvalue of $(X_{\btau}, S)$ with respect to $\mu$, where
\begin{itemize}
    \item $h_m = (|\tau_{[0,m)}(a)| : a \in \AA_m)$ for $m > 0$ 
    \item the set $W_{m, n}(a, b)$ is defined by
    \[
    \{
    \tau_{[m, n)}(b)_{[i,|\tau_{[m, n)}(b)|)}:
    \text{$i$ occurrence of $a$ in $\tau_{[m, n)}(b)$}
    \}
    \]
    \item for a word $w \in \AA_m^\ast$,
    \[
    \ell(w)
    = (|w|_a : a \in \AA_m).
    \]
\end{itemize}
The converse is also true, up to a contraction of the directive sequence $\btau$.
\end{lemma}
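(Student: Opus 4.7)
The plan is to realize the candidate measurable eigenfunction as an $L^2$-limit of approximate eigenfunctions read off the Kakutani--Rokhlin partitions associated with $\btau$, and to convert the hypothesis \eqref{eq:cor16} into a Cauchy estimate for this sequence. For each sufficiently large $m$, I would define $f_m : X_{\btau} \to \CC$ by $f_m(y) = \lambda^k$ whenever $y \in S^k B_m(a)$ with $a \in \AA_m$ and $0 \le k < h_m(a)$. Then $|f_m| \equiv 1$ and $f_m \circ S = \lambda f_m$ off the union of the tops of the level-$m$ towers. The latter set has measure $\sum_{a \in \AA_m} \mu_m(a) = \mu(B(\TT_m))$, which tends to $0$ since $\mu(\TT_m(a)) \to 0$ for $a \notin \AA_\mu$ and $h_m(a) \to \infty$ for $a \in \AA_\mu$ by primitivity and cleanness.

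The next step is to translate \eqref{eq:cor16} into a comparison between $f_n$ and $f_m$. For $m<n$ and $b \in \AA_n$, write $\tau_{[m,n)}(b) = a_0 a_1 \cdots a_{L-1}$. Then $\TT_n(b)$ splits into consecutive $\TT_m(a_i)$-sub-towers, and on the $i$-th sub-tower one has $f_n = \lambda^{O_i} f_m$ with $O_i = \sum_{j<i} h_m(a_j)$. If $w$ is the suffix of $\tau_{[m,n)}(b)$ starting at position $i$, then $\langle \ell(w), h_m\rangle = h_n(b) - O_i$; hence \eqref{eq:cor16} reads
\[
\frac{1}{|\tau_{[m,n)}(b)|_a} \Big| \sum_{i : a_i = a} \lambda^{-O_i} \Big| \longrightarrow 1 \quad \text{as } m \to \infty, \text{ uniformly in } n>m,
\]
for every $a,b \in \AA_\mu$. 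Since every term in the above sum lies on the unit circle, near-saturation of the triangle inequality forces the phases $\lambda^{O_i}$ (over $i$ with $a_i = a$) to cluster around a common direction.

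From this clustering I would extract unit scalars $c_m \in \CC$ so that, discarding the towers indexed by $b \in \AA_n \setminus \AA_\mu$ and the sub-towers with $a_i \in \AA_m \setminus \AA_\mu$ (whose total measure vanishes by cleanness), the identity
\[
\int_X |f_n - c_m f_m|^2 \, d\mu = \sum_{b,i} \mu(B_n(b)) \, h_m(a_i) \, |\lambda^{O_i} - c_m|^2
\]
is made arbitrarily small for every $n>m$ with $m$ large. Thus $(c_m f_m)$ is Cauchy in $L^2$; its limit $f$ has $|f|=1$ a.e., and the identity $f_m \circ S = \lambda f_m$ off a set of vanishing measure passes to the limit, yielding $f \circ S = \lambda f$ a.e. For the converse, up to contracting $\btau$, I would use Lusin's theorem to approximate a given eigenfunction uniformly on a large compact set and then observe that on the bulk of each level-$m$ tower $f$ takes value approximately $\lambda^k f(\text{base})$; averaging this over occurrences of $a$ in $\tau_{[m,n)}(b)$ and using unique ergodicity (or the clean weights) returns \eqref{eq:cor16}.

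The chief obstacle is in the extraction of a single scalar $c_m$ that works uniformly in $n$ and across all pairs $(a,b) \in \AA_\mu \times \AA_\mu$. Per-letter clustering only furnishes a direction $\alpha_{m,n}(a,b)$ depending on $a, b, n$, whereas the $L^2$ estimate requires a common $c_m$. Overcoming this appears to need the full strength of the uniformity in $n$ of \eqref{eq:cor16}, together with a diagonal argument and the asymptotic comparability of the weights $h_m(a_i)/h_n(b)$ guaranteed by cleanness; this is what makes the contraction of $\btau$ in the converse statement unavoidable.
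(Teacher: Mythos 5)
First, a point of order: the paper does not actually prove this lemma. It is imported (after translation into the $\SS$-adic vocabulary) from \cite{DFM19}*{Corollary 16}, so there is no in-paper proof to compare yours against; I can only assess your attempt on its own merits.

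Your strategy is the expected one (it is essentially the approach of \cites{BDM05, BDM10, DFM19}), and the parts you carry out are correct: the identification $\langle \ell(w), h_m\rangle = h_n(b) - O_i$, the exact formula for $\int |f_n - c f_m|^2\, d\mu$, and the fact that the tower tops and the contributions of letters outside $\AA_\mu$ are measure-negligible all check out. But the step you yourself flag as the chief obstacle is a genuine gap, and you do not close it. Hypothesis \eqref{eq:cor16} is a \emph{separate} statement for each pair $(a,b) \in \AA_\mu \times \AA_\mu$; near-saturation of the triangle inequality gives, for each pair, a unit scalar $c_{a,b}^{m,n}$ with $\sum_{i : a_i = a} |\lambda^{O_i} - c_{a,b}^{m,n}|^2 \le 2\epsilon_m |\tau_{[m,n)}(b)|_a$, but nothing in the hypothesis at a single pair of scales $(m,n)$ forces these finitely many directions to agree, and your $L^2$ estimate requires one constant per $(m,n)$. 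I would add that you have mislocated part of the difficulty: the constant is allowed to depend on $n$ as well as $m$, because the approximants $f_n$ have unit modulus and it suffices that the classes $[f_n]$ be Cauchy in $L^2(\mu)$ modulo unimodular scalars, after which one chooses convergent representatives; so ``a single $c_m$ uniform in $n$'' is not what is needed. What \emph{is} needed, and missing, is the alignment of the cluster directions across the pairs $(a,b)$ at a fixed $(m,n)$. This can be extracted, but only by an additional argument --- for instance interposing a third scale $m < n < n'$, decomposing the occurrences of $a \in \AA_m$ in $\tau_{[m,n')}(c)$ according to the occurrences of each $b \in \AA_n$ in $\tau_{[n,n')}(c)$, and exploiting the uniformity in $n$ of \eqref{eq:cor16} together with the comparability of $|\tau_{[m,n)}(b)|_a$ with $\mu_m(a) h_n(b)$ for $a, b \in \AA_\mu$ (which is where cleanness enters) to propagate one pair's direction to another. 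Without this, the sequence $(c_m f_m)$ is not shown to be Cauchy and the proof is incomplete. Two lesser remarks: your attribution of the contraction in the converse statement to this coherence issue is off --- the contraction is what puts the \emph{necessary} condition into the stated clean form, not what repairs the sufficiency direction --- and the converse itself is only gestured at rather than argued.
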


Let us precise here that a sequence $(a_{m,n})_{m,n \ge 0}$ converges to $\ell$ as $m \to +\infty$ uniformly for $n > m$ if for every $\epsilon > 0$ there exists $m_0 \ge 0$ such that for all $n > m \ge m_0$ we have
\[
|a_{m,n} - \ell| < \epsilon.
\]

Let $a,b$ be two positive integers with $a > b$ and let $\btau_\WW = (\tau_n : \AA_{n+1}^\ast \to \AA_n^\ast)_{n \ge 0}$ be the directive sequence given by
\begin{equation}\label{eq:morphisms_1}
\tau_n(a) = a^n b a^n \quad
\text{and} \quad
\tau_n(b) = a^n b a^{n-2} b a, \quad
n \ge 1.
\end{equation}
\Cref{t:recognizable_sequence} implies that $\btau_\WW$ defines a minimal Ferenczi subshift $(X_\WW, S)$.

\begin{proposition}
The minimal Ferenczi subshift $(X_\WW , S)$ defined by \eqref{eq:morphisms_1} is such that $d_{\WW_\mu} = 1$ and every rational measurable eigenvalue is continuous, for the unique ergodic measure $\mu$ of $(X_\WW , S)$.
\end{proposition}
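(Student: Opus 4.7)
My plan is to verify $d_{\WW_\mu} = 1$ directly via \Cref{p:exact_rank}, then combine \Cref{p:veech_criterion} with the converse part of \Cref{l:cor16} to force the continuity condition of \Cref{p:rational_cont_eigs}.

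From \eqref{eq:morphisms_1} the morphism $\tau_n$ has constant length $2n+1$, giving $q_{n-1} = 2n$, $f_n(a) = 2n-1$ and $f_n(b) = 1$. Applying \Cref{p:exact_rank}: the series $\sum_{k \ge m} f_k(b)/Q_{m-1,k}$ is dominated by a geometric series starting at $1/(2m+1)$ and therefore tends to $0$, so $b \notin \AA_\mu$; while for $a$ the first term $(2m-1)/(2m+1)$ is bounded below by a positive constant, so $a \in \AA_\mu$. Hence $\AA_\mu = \{a\}$ and $d_{\WW_\mu} = 1$.

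Now fix a rational measurable eigenvalue $\lambda = \exp(2\pi i p/q)$ with $\gcd(p,q) = 1$. Hypotheses (i) and (ii) of \Cref{p:veech_criterion} hold trivially (for (ii) take $u_n = \tau_{[0,n)}(a)$, since $\AA_\mu$ is a singleton), and the conclusion $\inttt{(p/q) h_n(a)} \to 0$ forces $q \mid h_n(a)$ for all large $n$. Since \Cref{l:computation_heights} gives $h_n(a) - h_n(b) = a-b$, it remains only to prove $q \mid (a - b)$ in order to apply \Cref{p:rational_cont_eigs} and conclude continuity of $\lambda$. To this end, invoke the converse of \Cref{l:cor16}: there is a contraction $\widetilde{\tau}_k = \tau_{[n_k, n_{k+1})}$ for which \eqref{eq:cor16} holds with $c = d = a$. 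For $j \ge 3$, the words $\tau_j(a)$ and $\tau_j(b)$ both start and end with $a$ and contain no internal $bb$, so for large $k$ the word $\widetilde{\tau}_k(a)$ has no two adjacent $b$'s. Consecutive $a$-occurrences in $\widetilde{\tau}_k(a)$ are therefore separated either by nothing or by a single $b$, producing phase jumps $h_{n_k}(a) \equiv 0$ and $h_{n_k}(a) + h_{n_k}(b) \equiv \Delta_k := h_{n_k}(b)$ modulo $q$ (using $q \mid h_{n_k}(a)$). Labeling the maximal $a$-runs in $\widetilde{\tau}_k(a)$ as $g_0, g_1, \ldots, g_{N_b}$, the exponential sum in \eqref{eq:cor16} equals, up to a unimodular factor, $\sum_{l=0}^{N_b} g_l (\lambda^{\Delta_k})^{-l}$, and the outermost morphism $\tau_{n_k}$ forces each $g_l$ to lie in the bounded set $\{1,\, n_k - 2,\, n_k,\, n_k + 1,\, 2 n_k\}$.

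The main obstacle is an equidistribution estimate: for $q' := q / \gcd(q, \Delta_k) > 1$, one must show that for every residue $r$ the partial sums $\sum_{l \equiv r \pmod{q'}} g_l / N_a$ converge to $1/q'$, so that $|\sum_l g_l \lambda^{l\Delta_k}| / N_a \to 0$. This is plausible because block sizes are uniformly bounded while the number of blocks $N_b$ grows without bound as one lets $l \to \infty$ in the uniform convergence of \eqref{eq:cor16}, and because the aperiodicity and unique ergodicity of $(X_\WW, S)$ rule out hidden $q'$-periodic correlations in the block sequence (alternatively, the empirical frequencies of letter patterns in $\widetilde{\tau}_{[k,l)}(a)$ can be replaced by their $\mu$-values as $l \to \infty$). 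Once this step is secured, uniform convergence in \eqref{eq:cor16} forces $q \mid h_{n_k}(b)$ for all large $k$; combined with $q \mid h_{n_k}(a)$ this yields $q \mid (a - b)$ and completes the proof.
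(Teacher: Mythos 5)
Your computation of $d_{\WW_\mu}=1$ is correct (it reaches the same conclusion as the paper, which bounds $\mu(\TT_n(b))\le 2/(2n+1)$ directly), and your overall strategy for the eigenvalue part — Veech gives $q\mid h_n(a)$, reduce to $q\mid(a-b)$, and contradict the converse of \Cref{l:cor16} by estimating the exponential sum $\sum_l g_l(\lambda^{\Delta_k})^{-l}$ over the $a$-runs — is exactly the paper's setup. But the proof has a genuine gap precisely at the step you flag yourself: the claimed equidistribution $\sum_{l\equiv r\ (\mathrm{mod}\ q')}g_l/N_a\to 1/q'$. This is the entire content of the argument, and the justification offered ("aperiodicity and unique ergodicity rule out hidden $q'$-periodic correlations") is not a proof and is not even a reliable heuristic: unique ergodicity controls frequencies of words in $X_\WW$, not correlations between the length $g_l$ of the $l$-th $a$-run and the residue of its index $l$ modulo $q'$, and substitutively generated sequences routinely carry such arithmetic correlations (Thue--Morse being the standard cautionary example). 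Moreover, for the long compositions $\widetilde{\tau}_k=\tau_{[n_k,n_{k+1})}$ you work with, the run lengths range over $\{1,n_k-2,n_k,n_k+1,2n_k\}$ while the number of runs grows like $|w|/(2n_k)$, so the crude bound $\lvert\sum_r A_r\zeta^r\rvert\le N_a-q'\min_r A_r$ only yields $1-O(1/n_k)$, which is useless; you genuinely need the unproven cancellation.

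The paper closes this gap differently, and more cheaply: it does not try to make the normalized sum tend to $0$, only to keep it bounded away from $1$, and it does so on the fixed-depth composition $\tau_{[m,m+2)}(a)$ rather than on a long contraction. There the word has exactly $2m+5$ $a$-blocks whose lengths all lie within $m+2$ of one another (equation \eqref{eq:comparable}), so after grouping blocks by the residue of their index and using $\sum_{\ell=0}^{p-1}\exp(2\pi i\ell/p)=0$, the normalized sum is at most $(m+2)(2m+5)/\bigl((2m+1)(2m+3)\bigr)\to 1/2<1$, contradicting the required uniform convergence to $1$ in \eqref{eq:cor16}. If you want to salvage your write-up, replace the equidistribution step by this "all blocks have comparable length, so the residue classes carry comparable mass" argument on a two-step composition; as it stands, the key estimate is asserted, not proved.
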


\begin{proof}
We say that a word $w \in \{a,b\}^\ast$ consists of \emph{$a$-blocks} if we can write
\begin{equation}\label{eq:blocks}
w
= B_0(w) b B_1(w) b \ldots b B_{b(w) - 1}(w),
\end{equation}
where $b(w) \ge 2$ and $B_j (w)$ is a nontrivial power of $a$ for $0 \le j < b(w)$.

Define $w(m,n) = \tau_{[m,n)}(a)$ and $w'(m,n) = \tau_{[m,n)}(b)$ for $n > m \ge 0$.
It is easy to check that $w(m,n)$ and $w'(m,n)$ consist of $a$-blocks.
We will use the notation introduced in \eqref{eq:blocks} for these words.

We have $\AA_\mu = \{a\}$.
Indeed, the composition matrix $M_{\tau_n}$ of the morphism $\tau_n$ is
\[
M_{\tau_n}
= \matriz{2n & 2n-1 \\ 1 & 2}.
\]
Hence, from \eqref{eq:prob_measure} and \eqref{eq:inv_measure}, we obtain
\begin{align*}
\mu(\TT_n(b))
&= h_n(b) \mu_n(b)
= h_n(b)
(\mu_{n+1}(a) + 2 \mu_{n+1}(b))\\
&= h_n(b)
\Bigg(
\frac{\mu(\TT_{n+1}(a))}{h_{n+1}(a)} + 2 \frac{\mu(\TT_{n+1}(b))}{h_{n+1}(b)}
\Bigg)
\le \frac{2}{2n+1},
\end{align*}
where we used $h_{n+1}(a) = 2n h_n(a) + h_n(b) \ge (2n+1) h_n(b)$ by \eqref{eq:height_vectors} and, analogously, $h_{n+1}(b) \ge (2n+1) h_n(b)$.
Thus $d_{\WW_\mu} = 1$ and, from \cite[Proposition 5.1]{BKMS13}, we deduce
\begin{equation}\label{eq:occurence_a}
|w(m,n)|_a / |w(m,n)|
\to 1 \quad
\text{as} \quad
m \to +\infty \quad
\text{uniformly for} \quad
n > m.
\end{equation}

Suppose that $p$ is a prime number and that $\lambda = \exp(2 \pi i / p)$ is a rational eigenvalue of $(X_{\WW}, S)$ with respect to $\mu$ that is not continuous.
From \eqref{eq:cor16} and \eqref{eq:occurence_a}, we should have 
\begin{equation}\label{eq:cor16_contr}
\frac{
\Big |\sum_{w \in W_{m, m+2}(a, a)} \lambda^{\langle \ell(w), h_m \rangle}
\Big |
}
{|w(m,m+2)|}
\to 1 \quad
\text{as} \quad
m \to +\infty.
\end{equation}

Let us show this is not the case.
Let $m \ge 0$.
From the definition of the set $W_{m,m+2}(a, a)$ and \eqref{eq:blocks}, we have
\[
\sum_{w \in W_{m, m+2}(a, a)} \lambda^{\langle \ell(w), h_m \rangle}
= \sum_{j=0}^{b(w(m,m+2)) - 1}
\sum_{i=0}^{|B_j (w(m,m+2))| - 1}
\lambda^{\langle \ell(u_{j,i}), h_m \rangle},
\]
where $u_{j,i} = w(m,m+2)_{[
\left( \sum_{k = 0}^{j-1} |B_k(w(m,m+2))| \right) + j + i , |w(m,m+2))|}$.

On the other hand, from \eqref{eq:height_vectors} we have $h_m(b) = h_m(a) + (b-a)$, therefore
\begin{align*}
\langle
\ell(u_{j,i}) , h_m
\rangle
&= h_m(a) \Big (\sum_{k=0}^{j-1} |B_k (w(m,m+2))| + i \Big ) + h_m(b)j\\
&= h_m(a) \Big (\sum_{k=0}^{j-1} |B_k (w(m,m+2))| + i + j \Big ) + (b-a)j.
\end{align*}

Since $\AA_\mu = \{a\}$, the Veech criterion \eqref{eq:veech_criterion} implies that $p$ divides $h_m(a)$ for all large enough $m$.
Moreover, as $\lambda$ is a noncontinuous eigenvalue, $p$ does not divide $h_m(b)$ for any $m \ge 0$, and hence $p$ does not divide $b-a$.
Denote by $(b-a)^{-1}$ the inverse of $b-a \pmod{p}$.
For each $0 \le \ell < p$, let $r_\ell$ be such that $0 \le r_\ell < p$ and $r_\ell \equiv \ell \cdot (b - a)^{-1} \pmod{p}$.

For all large enough $m$ we have
\begin{align}\label{eq:cor16_blocks}
\sum_{w \in W_{m, m+2}(a, a)} \lambda^{\langle \ell(w), h_m \rangle}
& = \sum_{j=0}^{b(w(m,m+2)) - 1}
|B_j (w(m,m+2))| \exp(2 \pi i (b-a)j / p) \nonumber\\
& = \sum_{\ell = 0}^{p-1} a_\ell \exp(2 \pi i \ell / p),
\end{align}
with
\begin{equation}\label{eq:al}
a_\ell
= \sum_{j=0}^{\lfloor b(w(m,m+2)) / p \rfloor - 1} |B (w(m,m+2))_{jp + r_\ell}|.
\end{equation}

From the shape of the images of the morphism $\tau_m$ \eqref{eq:morphisms_1} and the fact that $\tau_{[m,m+2)}(a)$ belongs to the free monoid $\{\tau_m(a), \tau_m(b)\}^\ast$, we have
\begin{equation}\label{eq:comparable}
|B_j (w(m,m+2)|
- |B_{j'}v (w(m,m+2))|
\le m+2, \quad
0 \le j,j' < b(w(m,m+2)).
\end{equation}

This, together with \eqref{eq:al} implies that
\begin{equation}\label{eq:comparable_al}
a_\ell - \min_{0 \le \ell' < p} a_{\ell'}
\le
(m+2) \frac{b(w(m,m+2))}{p}, \quad
0 \le \ell < p.
\end{equation}

From \eqref{eq:cor16_blocks} and \eqref{eq:comparable_al}, we have
\begin{equation}\label{eq:ineq_comparable}
\frac{
\Big |\sum_{w \in W_{m, m+2}(a, a)} \lambda^{\langle \ell(w), h_m \rangle}
\Big |
}
{|w(m,m+2)|}
\le 
(m+2) \frac{b(w(m,m+2))}{|w(m,m+2)|},
\end{equation}
where we used the fact that $\sum_{\ell=0}^{p-1} \exp(2 \pi i \ell / p) = 0$.

\medskip

By computing $\tau_m \circ \tau_{m+1}(a)$, one can easily check that the number of $a$-blocks of $\tau_{[m,m+2)}(a)$ is $b(w(m,m+2)) = 2m + 5$.
Then we deduce
\[
(m+2) \frac{b(w(m, m+2))}{|w(m, m+2)|}
= \frac{(m+2)(2m+5)}{(2m+1)(2m+3)}
\to
\frac{1}{2} \quad
\text{as} \quad
m \to +\infty.
\]

This contradicts \eqref{eq:cor16_contr} and finishes the proof.
\end{proof}


\sloppy\printbibliography

\end{document}